\newtheorem{thm}{Theorem}[section]
\newtheorem{cor}[thm]{Corollary}
\newtheorem{lem}[thm]{Lemma}
\newtheorem{prop}[thm]{Proposition}
\theoremstyle{definition}
\newtheorem{defn}[thm]{Definition}
\theoremstyle{remark}
\newtheorem{rem}[thm]{Remark}
\theoremstyle{definition}
\newtheorem{ex}[thm]{Example}
\theoremstyle{definition}
\newtheorem{exs}[thm]{Examples}
\theoremstyle{remark}
\newtheorem{ques}[thm]{Question}
\newtheorem{sublemma}{Lemma}[thm]
\numberwithin{equation}{section}
\newcommand{\aaa}{\mathfrak{a}}
\newcommand{\bbb}{\mathfrak{b}}
\newcommand{\ccc}{\mathfrak{c}}
\newcommand{\ppp}{\mathfrak{p}}
\newcommand{\mmm}{\mathfrak{m}}
\newcommand{\nnn}{\mathfrak{n}}
\newcommand{\qqq}{\mathfrak{q}}
\newcommand{\AAA}{\mathfrak{A}}
\newcommand{\BBB}{\mathfrak{B}}
\newcommand{\ringa}{(A,\mathfrak{a},\mathfrak{A})}
\newcommand{\ringb}{(B,\mathfrak{b},\mathfrak{B})}
\newcommand{\locala}{(A,\mathfrak{m},k)}
\newcommand{\localb}{(B,\mathfrak{n},l)}
\newcommand{\colimi}{\underset{i}{\varinjlim} \;}
\newcommand{\limn}{\underset{n}{\varprojlim} \;}
\newcommand{\colimn}{\underset{n}{\varinjlim} \;}
\newcommand{\limi}{\underset{i}{\varprojlim} \;}
\newcommand{\Tor}{\mathrm{Tor}}
\newcommand{\kerr}{\mathrm{ker}}
\newcommand{\HHom}{\mathrm{Hom}}
\newcommand{\HH}{\mathrm{H}}
\def\@tocline#1#2#3#4#5#6#7{\relax
  \ifnum #1>\c@tocdepth 
  \else
    \par \addpenalty\@secpenalty\addvspace{#2}%
    \begingroup \hyphenpenalty\@M
    \@ifempty{#4}{%
      \@tempdima\csname r@tocindent\number#1\endcsname\relax
    }{%
      \@tempdima#4\relax
    }%
    \parindent\z@ \leftskip#3\relax \advance\leftskip\@tempdima\relax
    \rightskip\@pnumwidth plus4em \parfillskip-\@pnumwidth
    #5\leavevmode\hskip-\@tempdima
      \ifcase #1
       \or\or \hskip 1em \or \hskip 2em \else \hskip 3em \fi%
      #6\nobreak\relax
    \hfill\hbox to\@pnumwidth{\@tocpagenum{#7}}\par
    \nobreak
    \endgroup
  \fi}
\begin{document}

\title[Formally regular rings and descent]{Formally regular rings and descent of regularity}%
\author{Samuel Alvite, Nerea G. Barral and Javier Majadas}%
\address{Departamento de Matem\'aticas, Facultad de Matem\'aticas, Universidad de Santiago de Compostela, E15782 Santiago de Compostela, Spain}%
\email{samuelalvite@gmail.com, nereabarral@gmail.com, j.majadas@usc.es}%

\thanks{$^{(\star)}$ This work was partially supported by Agencia Estatal de Investigaci\'on (Spain), grant MTM2016-79661-P (European FEDER support included, UE) and by Xunta de Galicia through the Competitive Reference Groups (GRC) ED431C 2019/10}

\keywords{Formal smoothness, regularity, perfectoid algebra}%
\thanks{2010 {\em Mathematics Subject Classification.} 14B10, 13H05 13B40, 13D03, 14G45}

\begin{abstract}
  Valuation rings and perfectoid rings are examples of (usually non-noetherian) rings that behave in some sense like regular rings. We give and study an extension of the concept of regular local rings to non-noetherian rings so that it includes valuation and perfectoid rings and it is related to Grothendieck's definition of formal smoothness as in the noetherian case. For that, we have to take into account the topologies. We prove a descent theorem for regularity along flat homomorphisms (in fact for homomorphisms of finite flat dimension), extending some known results from the noetherian to the non-noetherian case, as well as generalizing some recent results in the non-noetherian case, such as the descent of regularity from perfectoid rings by B. Bhatt, S. Iyengar and L. Ma.
\end{abstract}

\maketitle
\setcounter{secnumdepth}{1}
\setcounter{section}{-1}
\tableofcontents

\section{Introduction}

Valuation rings and perfectoid rings, though they are in general non-noetherian, behave in some sense like regular rings. For instance, B. Bhatt, S. Iyengar and L. Ma prove in \cite{BIM} the following perfectoid version of a well known theorem by E. Kunz \cite{Kunz}  in positive characteristic: if $A\to B$ is a flat homomorphism (or more generally of finite flat dimension) where $(A,\mmm)$ is a noetherian local ring and $B$ perfectoid with $\mmm B\neq B$, then $A$ is regular. In fact, in \cite{BIM} more is proved, and their results give for instance the descent of regularity along morphisms that are coverings for the fpqc topology \cite[Theorem 10.4]{AF}. In this paper, we introduce a notion of regularity for not necessarily noetherian rings that we call formal regularity, and prove some results. Since among formally regular rings we find perfectoid rings, the above results are particular cases of our descent theorem:\\

\noindent \textbf{Theorem \ref{main3}.} \textit{Let $A\to B$ be a flat local homomorphism (or, more generally, of finite flat dimension) of (not necessarily noetherian) local rings. If $B$ is formally regular, then so is $A$.} \\

Bertin's definition of regularity in the non-noetherian case \cite{Ber} is chosen so that regularity localizes, and therefore it is formulated in terms of projective dimensions. However, we do not think that the localization of regularity in the non-noetherian context should be a natural requirement. Instead, we look for a concept agreeing with the usual definition in the noetherian case, including some rings expected to be ``regular" (polynomial rings over a field, valuation rings, perfectoid rings, ...), and that is related to Grothendieck's notion of formal smoothness in the same way as in the noetherian case. Much like topologies play an important role for formal smoothness when we do not have finiteness hypotheses, our definition of formal regularity will depend on the chosen topology.

Our main tool will be the cotangent complex of M. Andr\'e and D. Quillen which, contrary to its known rigidity in the noetherian case, is sensitive to the topologies when the rings are not noetherian. In fact, our definition of formal regularity appears implicitly or explicitly (depending on the chosen topology) in the work of M. Andr\'e. We give a fairly general definition of formal regularity of an ideal $\mmm$, though we mainly use two cases: formal regularity with the $\mmm$-adic topology or the stronger formal regularity with the discrete topology. Each one has its own advantages and therefore it is useful to work with both. In the noetherian case, formal regularity does not depend on the topology and, as expected, agrees with the notion of ideal locally generated by a regular sequence.

In the first section we define and study formal regularity of rings. In order to get some needed flexibility, we have to begin with a refinement of Grothendieck's definition of formal smoothness, so that it depends on an ideal and a topology (recovering Grothendieck's notion when the topology is the adic topology defined by that ideal). Next we define formal regularity for ideals and local rings, studying with some results and examples the dependence on the topology. Separated local formally regular rings are domains (Corollary \ref{cor.domain}), and perfectoid and valuation rings are formally regular (Corollary \ref{valuation.and.perfectoid}). The relation between formal smoothness and formal regularity is the one expected (Theorem \ref{teorem.f.smooth.regular} and Corollary \ref{cor.1.f.smooth.regular}): for instance, over a perfect field, a local ring is formally regular if and only if it is a formally smooth algebra. We also have to give a notion of regular homomorphisms for not necessarily noetherian rings (\ref{43} - \ref{48}), since we will need it at the end of the paper in order to generalize the relative form of Kunz's result. The last part of this section is devoted to studying the relation between vanishing of Koszul homology and formal regularity.

In section 2, we will give the main technical result (Theorem \ref{main}) needed in the following section. Since our notion of regularity is not based in the concept of projective dimension (it is ``based'' on rings instead of modules), our methods are necessarity different from those on \cite{BIM} (but note that some of the results obtained in that paper are valid not only for $A$-algebras but also for $A$-modules).

After deducing Theorem \ref{main3} above, section 3 is devoted to some other applications of the descent technical results of section 2. For instance, we obtain complete intersection and Gorenstein analogues of the above cited result by Bhatt, Iyengar and Ma of descent of regularity from perfectoid rings (Propositions \ref{prop.cidim} and \ref{Gorens}), and we can avoid the noetherian hypothesis in the original Kunz's result:\\

\noindent \textbf{Corollary \ref{absolute.Kunz}} \textit{Let $B$ be a local ring containing a field of characteristic $p>0$ and $\phi:B\to B$ the Frobenius homomorphism. If $\phi$ is flat (or more generally of finite flat dimension) then $B$ is formally regular.} \\

In fact, we drop the noetherian hypothesis also in the more general relative version by André, Dumitrescu and Radu of Kunz's theorem (Theorem \ref{relativekunz}).

\section{Formal regularity}

All rings and algebras will be commutative (but note that in Section 2 sometimes they will be graded anti-commutative). We denote by $\mathbb{L}_{B|A}$ the cotangent complex of an $A$-algebra $B$, and if $M$ is a $B$-module, by $\HH^i(A,B,M)=\HH^i(\HHom_B(\mathbb{L}_{B|A},M))$, $\HH_i(A,B,M)=\HH_i(\mathbb{L}_{B|A}\otimes_B M)$ the Andr\'e-Quillen (co)homology modules \cite{An1974}, \cite{QMIT}. A local ring $\locala$ is a (not necessarily noetherian) ring $A$ with a unique maximal ideal $\mmm$ and residue field $k$.

\begin{defn}\label{topideal}
We will consider a ring $A$ with a decreasing filtration $\AAA=\{\aaa_n\}_{n>0}$ consisting on ideals of $A$. We denote these data by $(A,\AAA)$, and for the sake of simplicity we say that $(A,\AAA)$ is a \emph{topological ring}, since the results below are not affected by a change in the filtration inducing the same topology. Sometimes we fix an ideal $\aaa$ of $A$ such that $\aaa_n \subset \aaa$ for all $n$. We denote these data by $\ringa$. Given an ideal $\aaa$, most of the time we will work with two filtrations, the $\aaa$-\emph{adic} one defined by $\aaa_n=\aaa^n$ and the \emph{discrete} one (the 0-adic) defined by $\aaa_n=0$ for each $n$. However, non-adic topologies arise naturally in the non-noetherian case as the next example shows (see also Example \ref{exampleformalregular} (i)).

A homomorphism of topological rings (or a continuous homomorphism)
\[f:(A,\AAA) \to (B,\BBB)\;{\rm or }\;f:\ringa \to (B,\BBB) \; {\rm or } \; f:(A,\AAA) \to \ringb \]
is a ring homomorphism $f:A\to B$ such that for each $n$ there exists some $t$ such that $f(\aaa_t)\subset \bbb_n$, while if we write
\[ f:\ringa \to \ringb\]
we are also assuming that $f(\aaa)\subset \bbb$. We also say that $(B,\BBB)$ is a topological $(A,\AAA)$-algebra (or that $\ringb$ is a topological $\ringa$-algebra when $f(\aaa)\subset \bbb$). Sometimes, deleting some terms of the filtration $\AAA$ and renumbering we will assume that $f(\aaa_n)\subset \bbb_n$ for all $n$ (see Remark \ref{independent}).
\end{defn}

\begin{ex}\label{examplecompletion}
Let $\locala$ be a local ring. Consider the $\mmm$-adic filtration $\mathfrak{M}=\{\mmm^n\}_{n>0}$. Let
\[\hat{A}:= \limn A/\mmm^n \]
be its completion. The ring $\hat{A}$ is local \cite[III, §2, n.13, Proposition 19]{BoAC14} with maximal ideal
\[ \hat{\mmm} := \mathop{\lim_{\longleftarrow}}_{n} \mmm/\mmm^n \]
and residue field $k$. We have three natural topologies on $\hat{A}$: the discrete topology, the $\hat{\mmm}$-adic topology, and the limit topology $\{\widehat{\mmm^n}\}_{n>0}$ given by

\[ \widehat{\mmm^i} := \mathop{\lim_{\longleftarrow}}_{n>i} \mmm^i/\mmm^n = \kerr (\hat{A}\xrightarrow{\epsilon_i} A/\mmm^i) \]

The two latter agree if dim$_k(\mmm/\mmm^2)<\infty$ (for instance if $A$ is noetherian), since then $\hat{\mmm}^n= \widehat{\mmm^n}$ \cite[$0_{\rm I}$.7.2.7]{EGAI}, but in general these topologies are different: while the topological ring $(\hat{A}, \hat{\mmm}, \{\widehat{\mmm^n}\}_{n>0})$ is clearly separated and complete, $\hat{A}$ is not necessarily complete for the $\hat{\mmm}$-adic topology \cite[III, §2, Exercise 12]{BoAC14}.

\end{ex}

\hfill \break
\subsection{Formal smoothness}
\hfill \break

We will need a slight refinement of Grothendieck's definition of formal smoothness.

\begin{defn}\label{formallysmooth}
We say that a topological $(A,\AAA)$-algebra $\ringb$ is \emph{formally smooth} (or that $B$ is formally smooth over $(A,\AAA)$ for the ideal $\bbb$ with the $\BBB$ topology) if the following condition is satisfied: for any discrete topological $(A,\AAA)$-algebra $(C,\{(0)\}_{n>0})$, any square-zero ideal $M$ of $C$ (and then $M$ is a $C/M$-module), and any continuous $A$-algebra homomorphism $f:B\to C/M$ (that is, $f(\bbb_t)=0$ for some $t$) such that $f$ induces a $B/\bbb$-module structure on $M$ (that is, $p^{-1}(f(\bbb))M=0$, where $p:C\to C/M$ is the canonical map) there exists a continuous $A$-algebra homomorphism $g:B\to C$ such that $f=pg$, where $p:C \to C/M$ is the canonical map.
\end{defn}

\begin{rem}\label{independent}
Clearly, this definition depends only on the topologies induced by the filtrations $\AAA$ and $\BBB$ and not on the filtrations themselves.
\end{rem}

We will show (Corollary \ref{Grothendiecksmooth}) that when $\BBB$ is the $\bbb$-adic filtration (and $\AAA$ is an adic filtration), this definition does not depend on $\AAA$ and is equivalent to Grothendieck's definition  \cite[$0_{\rm IV}$.19.3.1, $0_{\rm IV}$.19.4.3]{EGAIV1} of $B$ being formally smooth over $A$ for the $\bbb$-adic topology. However, distinguishing from the ideal $\bbb$ and the topology $\BBB$ will be important (in the non-noetherian case). While we will see (Corollary \ref{noetheriansmooth}) that when $B$ is \emph{noetherian}, $\AAA$ is an adic filtration and $\BBB$ is the $\widetilde{\bbb}$-adic filtration for some ideal $\widetilde{\bbb}\subset\bbb$, Definition \ref{formallysmooth} does not depend on the ideal $\widetilde{\bbb}$ (and therefore, for any ideal $\widetilde{\bbb}$ it is equivalent to Grothendieck's definition), in the non-noetherian case in general it depends on $\widetilde{\bbb}$ (Example \ref{exampleformal}) and in particular considering on the ideal $\bbb$ the $\bbb$-adic topology or the discrete one will be different and important.\\

We will need the following proposition similar to \cite[$0_{\rm IV}$.19.3.10]{EGAIV1}
\begin{prop}\label{btildeb}
Let $(B,\bbb, \BBB)$ be a topological $(A,\AAA)$-algebra. The following are equivalent:\\

(i) $(B,\bbb, \BBB)$ is formally smooth over $(A,\AAA)$.\\

(ii) For any topological $(A,\AAA)$-algebra $(C,\{M^n\}_{n>0})$, where $M$ is an ideal of $C$, and any continuous $A$-algebra homomorphism $f:B\to C/M$ (that is, $f(\bbb_t)=0$ for some $t$) such that $p^{-1}(f(\bbb))M\subset M^2$ there exists an $A$-algebra homomorphism $g:B\to \hat{C}=\limi C/M^i$ continuous for the limit topology in $\hat{C}$ such that $f=\hat{p}g$, where $p:C\to C/M$ is the canonical map and $\hat{p}:\hat{C} \to C/M$ its completion.
\end{prop}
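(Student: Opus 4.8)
The plan is to prove (i) $\Rightarrow$ (ii) and (ii) $\Rightarrow$ (i) separately, with the first implication being a "lifting step by step" argument along the tower $C/M^{n+1} \to C/M^n$ and the second a direct specialization.

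\textbf{From (i) to (ii).} Suppose $\ringb$ is formally smooth over $(A,\AAA)$, and let $(C,\{M^n\}_{n>0})$, $f\colon B\to C/M$ be as in (ii), so $f(\bbb_t)=0$ for some $t$ and $p^{-1}(f(\bbb))M\subset M^2$. I would construct compatible continuous $A$-algebra homomorphisms $g_n\colon B\to C/M^n$ lifting $f$ (with $g_1=f$) by induction on $n$, and then take $g=\limi g_n\colon B\to\hat C$. For the inductive step, given $g_n\colon B\to C/M^n$ lifting $f$, I apply the definition of formal smoothness to the discrete $(A,\AAA)$-algebra $C/M^{n+1}$ with square-zero ideal $M^n/M^{n+1}$: indeed $(M^n/M^{n+1})^2 = M^{2n}/M^{n+1}=0$ as soon as $2n\ge n+1$, i.e. $n\ge 1$, so this ideal is always square-zero. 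One must check the two continuity/module hypotheses: that $g_n$ is continuous as a map to the \emph{discrete} ring $C/M^{n+1}/(M^n/M^{n+1})=C/M^n$ — which holds because $g_n(\bbb_s)=0$ for suitable $s$ since $g_n$ lifts $f$ and $f(\bbb_t)=0$ (here one uses that $\bbb_s\subset\bbb_t+\,(\text{stuff in }M^n)$... more carefully, $g_n(\bbb_t)\subset M^n/M^n=0$) — and that the induced $B/\bbb$-module structure on $M^n/M^{n+1}$ is legitimate, i.e. that $q^{-1}(g_n(\bbb))(M^n/M^{n+1})=0$ where $q\colon C/M^{n+1}\to C/M^n$; this follows from $p^{-1}(f(\bbb))M\subset M^2$ by multiplying by $M^{n-1}$, giving $p^{-1}(f(\bbb))M^n\subset M^{n+1}$, and lifting the containment through the tower. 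Formal smoothness then yields a continuous $g_{n+1}\colon B\to C/M^{n+1}$ with the correct composite. The limit $g\colon B\to\hat C$ is continuous for the limit topology because each $g_n$ is continuous (equivalently $g(\bbb_t)\subset\ker(\hat C\to C/M)$ refined appropriately), and $\hat p\,g=g_1=f$.

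\textbf{From (ii) to (i).} Conversely, given a discrete $(A,\AAA)$-algebra $(C,\{(0)\}_{n>0})$, a square-zero ideal $M$, and $f\colon B\to C/M$ with $p^{-1}(f(\bbb))M=0$, I note that since $M^2=0$ we have $M^i=0$ for $i\ge 2$, hence $\hat C=\limi C/M^i=C/M^2=C$ and the limit topology on $\hat C$ is the discrete one; the hypothesis $p^{-1}(f(\bbb))M=0\subset M^2=(0)$ is exactly the hypothesis of (ii). So (ii) produces an $A$-algebra homomorphism $g\colon B\to\hat C=C$ with $\hat p\,g=f$, i.e. $pg=f$, and this $g$ is continuous (for the discrete topology on $C$, continuity just means $g(\bbb_s)=0$ for some $s$, which follows from $f(\bbb_t)=0$ and $g$ lifting $f$: $g(\bbb_t)\subset M$, and then $g(\bbb_{2t})\subset M^2=0$, using that the filtration is multiplicative up to refinement — or more simply, one argues $g(\bbb_t)\subset M$ and $p^{-1}(f(\bbb))M=0$ forces $g(\bbb_t)\cdot g(\bbb_t)=0$, but we need $g(\bbb_s)=0$ outright; this is where a small argument is needed, perhaps replacing $g$ is unnecessary since $g(\bbb_t)\subseteq M$ and $M\cdot M=0$ does not kill $M$, so instead use that $C$ discrete means we only need $g(\bbb_t)=0$... ). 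I expect the continuity bookkeeping here to need care.

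\textbf{Main obstacle.} The essential content is routine once the framework is set up; the genuine subtlety — and the step I expect to spend the most care on — is the \emph{continuity} bookkeeping in the inductive step: verifying that at each stage the relevant map into the quotient ring is continuous for the discrete topology and that the $B/\bbb$-module structure hypothesis $p^{-1}(g_n(\bbb))(M^n/M^{n+1})=0$ propagates correctly from the original hypothesis $p^{-1}(f(\bbb))M\subset M^2$. One must track how the filtration indices shift (the "$t$" witnessing $f(\bbb_t)=0$ versus the index at level $n$), using the reindexing convention $f(\bbb_n)\subset\bbb_n$... wait, here $f$ goes the other way; rather, using that continuity of $B\to C/M^n$ with $C/M^n$ discrete requires $g_n$ to vanish on some $\bbb_s$, which it does since it lifts $f$ and $f(\bbb_t)=0$ means $g_n(\bbb_t)\subset M^n/M^n$... no — $g_n(\bbb_t)$ maps to $f(\bbb_t)=0$ in $C/M$, so $g_n(\bbb_t)\subset M/M^n$, which is not yet zero. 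The correct fix is to observe that $g_n(\bbb_t)\subseteq M/M^n$ together with $p^{-1}(f(\bbb))M\subseteq M^2$ gives, after iteration, $g_n(\bbb_{t})\subseteq M/M^n$ and $g_n(\bbb_t)^{k}\subseteq M^k/M^n$; but $\bbb_t$ need not satisfy $\bbb_t^k\subseteq\bbb_{kt}$ unless the filtration is multiplicative — which it is in the adic case, and in general one reduces to the adic case via Remark \ref{independent} only for $\AAA$, not $\BBB$. So the cleanest route is: do \emph{not} require $g_n$ continuous to a discrete ring directly, but instead note $C/M^{n+1}$ carries the discrete topology \emph{as an $(A,\AAA)$-algebra} only if the structure map $A\to C/M^{n+1}$ is continuous, which it is since $A\to C$ is and $C$ is discrete; and continuity of $g_n\colon B\to C/M^n$ in Definition \ref{formallysmooth} is the condition "$g_n(\bbb_s)=0$ for some $s$", which we must \emph{verify holds} — and it does precisely because we should have been maintaining the stronger inductive hypothesis that $g_n(\bbb_{t'})=0$ for a fixed $t'$ (depending only on $t$ and the nilpotency), carried along as part of the induction. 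I would therefore state the inductive hypothesis to include this vanishing, making the verification immediate at each stage.
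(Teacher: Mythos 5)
Your proposal is correct and follows essentially the same route as the paper: for (i)$\Rightarrow$(ii) one lifts inductively along the tower $C/M^{n+1}\to C/M^n$, using that $p^{-1}(f(\bbb))M\subset M^2$ propagates to $p^{-1}(f(\bbb))M^n\subset M^{n+1}$, and for (ii)$\Rightarrow$(i) one observes that Definition \ref{formallysmooth} is exactly the case $M^2=0$. The continuity bookkeeping you agonize over is automatic and needs no strengthened induction hypothesis or multiplicativity of $\BBB$: Definition \ref{formallysmooth} already outputs a \emph{continuous} lift $g_{n+1}$ (i.e.\ $g_{n+1}(\bbb_s)=0$ for some $s$, possibly growing with $n$), so each stage hands the next a continuous input and the limit $g$ is continuous for the limit topology; conversely, in (ii)$\Rightarrow$(i) continuity for the limit topology is part of the conclusion of (ii) and gives $g(\bbb_s)\subset\widehat{M^2}=0$ directly.
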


\begin{proof}
The condition in Definition \ref{formallysmooth} is a particular case of (ii) when $M^2=0$, and for (i)$\Rightarrow$(ii) we define $g={\varprojlim }\; g_i:B\to \varprojlim C/M^i$, where the maps $g_i$ are constructed inductively with the diagrams
\[
\begin{tikzcd}
& B \arrow[ld, swap, "g_{i+1}", dashrightarrow] \arrow[d, "g_i"]\\
C/M^{i+1}\arrow[r] & C/M^i
\end{tikzcd}
\]
since the condition $f(\bbb)M\subset M^2$ gives us that for all $i>0$, $f$ (and so inductively $g_i$) induces a $B/\bbb$-module structure on $M^i/M^{i+1}$.
\end{proof}

\begin{defn} \cite[18.4.2]{EGAIV1} Let $A$ be a ring, $B$ an $A$-algebra. If $p:E\to B$ is a surjective homomorphism of $A$-algebras whose kernel $M$ is a square-zero ideal of $E$, we will say that the exact sequence
\[ 0\to M \to E\to B \to 0 \]
is an \emph{extension} of $B$ over $A$ by $M$. Note that $M$ has then a canonical $B$-module structure. We will say that two extensions of $B$ over $A$ by $M$
\[ 0\to M \to E\to B \to 0, \quad \quad  0\to M \to E'\to B \to 0 \]
are equivalent if there exists an $A$-algebra homomorphism $E\to E'$ making commutative the diagram
\[
\begin{tikzcd}
0 \arrow[r]& M \arrow[r]\arrow[d, equal]&E \arrow[r]\arrow[d]&B \arrow[r]\arrow[d, equal]&0\\
0 \arrow[r]& M \arrow[r]&E' \arrow[r]&B \arrow[r]&0
\end{tikzcd}
\]
Then $E\to E'$ is an isomorphism and this is an equivalence relation. We will say that an extension
\[ 0\to M \to E\xrightarrow{p} B \to 0 \]
is trivial if $p$ has an $A$-algebra section. All trivial extensions form one equivalence class.

If
\[ 0\to M \to E\to B \to 0 \]
is an extension and $B'\to B$ an $A$-algebra homomorphism, then there is an associated extension
\[ 0\to M \to E\times_BB'\to B' \to 0. \]
Similarly, if $M\to M'$ is a $B$-module homomorphism, we have an extension
\[ 0\to M' \to E\oplus_MM'\to B \to 0. \]
Finally, if $A'\to A$ is a ring homomorphism, we have an associated extension over $A'$
\[ 0\to M \to E\to B \to 0 \]
\cite[16.8, 16.9, 16.7]{An1974}.
\end{defn}

\begin{prop}\label{extensionH1} \cite[16.12]{An1974}
Let $A$ be a ring, $B$ an $A$-algebra and $M$ a $B$-module. There exists a bijection, natural on $A$, $B$ and $M$ in the above sense, between the set of equivalence classes of extensions of $B$ over $A$ by $M$ and the cohomology module $\HH^1(A,B,M)$, taking the class of the trivial extensions to 0.
\end{prop}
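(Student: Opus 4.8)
The plan is to fix a polynomial presentation of $B$ and identify both sides of the claimed bijection with one and the same cokernel. First I would pick a polynomial $A$-algebra $R=A[X_j]_{j\in J}$ with a surjection $\pi\colon R\to B$ and put $I=\kerr\pi$. The conceptual input is the standard low-degree description of the cotangent complex: the truncation $\tau_{\le 1}\mathbb{L}_{B|A}$ is represented by the two-term complex $I/I^2\xrightarrow{\ d\ }\Omega_{R|A}\otimes_R B$ (placed in degrees $1$ and $0$, with $d(\bar x)=dx\otimes 1$), which follows from the transitivity triangle for $A\to R\to B$ and the fact that $\mathbb{L}_{R|A}$ is, after base change to $B$, free and concentrated in degree $0$. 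Applying $\HHom_B(-,M)$ and using $\HHom_B(\Omega_{R|A}\otimes_R B,M)=\mathrm{Der}_A(R,M)$ one gets
\[
\HH^1(A,B,M)=\mathrm{coker}\!\left(\mathrm{Der}_A(R,M)\xrightarrow{\ d^{*}\ }\HHom_B(I/I^2,M)\right),\qquad d^{*}(D)(\bar x)=D(x).
\]
In particular every class of $\HH^1(A,B,M)$ is represented by some $B$-linear $\varphi\colon I/I^2\to M$, two such representing the same class iff they differ by the restriction to $I$ of an $A$-derivation $R\to M$.

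Next I would build the map $\Phi$ from extensions to cohomology. Given an extension $0\to M\to E\xrightarrow{q}B\to 0$, lift each $X_j$ to an element of $E$ mapping to $\pi(X_j)$, obtaining an $A$-algebra homomorphism $u\colon R\to E$ with $qu=\pi$. Then $u(I)\subseteq\kerr q\cong M$, and since $M^2=0$ in $E$ also $u(I^2)=0$, so $u$ induces a $B$-linear map $\varphi_u\colon I/I^2\to M$; set $\Phi(E)=[\varphi_u]\in\HH^1(A,B,M)$. This is well defined: any two lifts of $\pi$ agree modulo the square-zero ideal $M$, hence differ by an $A$-derivation $R\to M$, so the associated $\varphi$'s differ by a $d^{*}$-coboundary; and an equivalence $E\to E'$ of extensions transports a lift for $E$ to one for $E'$ with the same $\varphi$. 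The construction is visibly natural in $M$ (pushout of extensions along $M\to M'$ corresponds to post-composition $\varphi\mapsto (M\to M')\circ\varphi$), and independence of the presentation is obtained by comparing any two presentations with a common one lying over $B$: a morphism of presentations over $B$ carries lifts to lifts and induces the identity on $\HH^1(A,B,M)$ under the identification above. Naturality in $A$ and $B$ then follows from functoriality of presentations.

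For the inverse $\Psi$, observe that $0\to I/I^2\to R/I^2\xrightarrow{\bar\pi}B\to 0$ is an extension of $B$ over $A$ by $I/I^2$ whose associated map (take $u=\mathrm{id}$) is $\mathrm{id}_{I/I^2}$, hence represents the class of $\mathrm{id}_{I/I^2}$ in the cokernel above. For $\varphi\colon I/I^2\to M$ let $\Psi(\varphi)$ be the pushout extension $0\to M\to(R/I^2)\oplus_{I/I^2}M\to B\to 0$; by naturality of $\Phi$ in the module variable, $\Phi(\Psi(\varphi))=[\varphi\circ\mathrm{id}_{I/I^2}]=[\varphi]$. Conversely, for an extension $E$ with lift $u$, the map $u$ factors as $\bar u\colon R/I^2\to E$ restricting to $\varphi_u$ on $I/I^2$, so the universal property of the pushout yields a morphism of extensions $\Psi(\varphi_u)\to E$, necessarily an equivalence. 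Checking that $\Psi$ is well defined on classes (again the ambiguity is an $A$-derivation) makes $\Psi$ a two-sided inverse of $\Phi$, giving bijectivity; and $\Phi(E)=0$ exactly when some lift $u$ can be chosen with $u(I)=0$, i.e. when $u$ factors through an $A$-algebra section $B\to E$ of $q$, i.e. when $E$ is trivial — which is the last assertion.

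The step I expect to be the real obstacle is the low-degree identification in the first paragraph of $\mathbb{L}_{B|A}$ with the explicit two-term complex $I/I^2\to\Omega_{R|A}\otimes_R B$: this is the one place where the actual construction of the cotangent complex enters rather than formal manipulation, and it is what dictates the systematic use of a polynomial presentation. Everything downstream — well-definedness of $\Phi$ and $\Psi$, the inverse-pair verification, and the trivial-extension case — is diagram chasing, but the combined bookkeeping of independence of the presentation together with simultaneous naturality in $A$, $B$ and $M$ is where genuine care is required. (If one prefers, the first paragraph can be made definitional by working throughout with a simplicial polynomial resolution $P_\bullet\to B$, so that $\HH^1(A,B,M)=\HH^1(\mathrm{Der}_A(P_\bullet,M))$; then the comparable difficulty reappears as the task of running the constructions of the two middle paragraphs through the simplicial object.)
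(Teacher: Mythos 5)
Your argument is correct and is essentially the standard proof of this fact; the paper itself gives no proof but simply cites \cite[16.12]{An1974}, and the identification $\HH^1(A,B,M)=\mathrm{coker}\bigl(\mathrm{Der}_A(R,M)\to\HHom_B(I/I^2,M)\bigr)$ via a polynomial presentation, together with the lift-of-the-presentation construction of $\Phi$ and the pushout of the universal extension $0\to I/I^2\to R/I^2\to B\to 0$ for $\Psi$, is exactly the classical route taken there. No gaps worth flagging.
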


\begin{prop}\label{formallysmoothideal}
Let $f:(A,\AAA) \to \ringb$ be a continuous homomorphism, and assume for simplicity (see Remark \ref{independent}) that $f(\aaa_n) \subset \bbb_n$ for all $n$. The following are equivalent:\\
(i) The $(A,\AAA)$-algebra $\ringb$ is formally smooth.\\
(ii) $\colimn \HH ^1(A/\aaa_n,B/\bbb_n,M)=0$ for any $B/\bbb$-module $M$.
\end{prop}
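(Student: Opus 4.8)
The plan is to reduce the statement to Proposition \ref{extensionH1} by translating the lifting property of Definition \ref{formallysmooth} into the vanishing of a colimit of André–Quillen $\HH^1$ groups. First I would unwind what formal smoothness asks: given a discrete topological $(A,\AAA)$-algebra $C$, a square-zero ideal $M\subset C$ with $C/M$-module structure, and a continuous $A$-algebra map $f\colon B\to C/M$ killing some $\bbb_t$ and inducing a $B/\bbb$-module structure on $M$, we must lift $f$ to a continuous $A$-algebra map $g\colon B\to C$. The key observation is that since $f$ kills $\bbb_t$ and $C$ is discrete (killed by renumbering, say, already at level $n$ where also $f(\aaa_n)\subset\bbb_n$), the map $f$ factors through $B/\bbb_n$, and since $M$ is a $B/\bbb$-module and $\aaa_n\subset\aaa$ maps into $\bbb\supset\bbb_n$, the whole extension problem lives over the pair $A/\aaa_n\to B/\bbb_n$; a lift $g$ automatically factors through $B/\bbb_n$ as well (its composite to $C/M$ kills $\bbb_n$ and $M^2=0$ forces $g(\bbb_n)=0$... more precisely $g(\bbb_t)\subset M$ and then $g(\bbb_t)^2=0$, and one passes to a suitable index). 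So the lifting data is exactly: an extension of $B/\bbb_n$ over $A/\aaa_n$ by the $B/\bbb$-module $M$, pulled back along $B\to B/\bbb_n$, and asking it to be trivial.

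Concretely, for (i)$\Rightarrow$(ii), fix a $B/\bbb$-module $M$ and a class $\xi\in\HH^1(A/\aaa_n,B/\bbb_n,M)$; by Proposition \ref{extensionH1} it corresponds to an extension $0\to M\to E\to B/\bbb_n\to 0$ of $A/\aaa_n$-algebras. Viewing $E$ as a discrete topological $(A,\AAA)$-algebra via $(A,\AAA)\to A/\aaa_n\to E$, and $f\colon B\to B/\bbb_n = E/M$ the canonical continuous map, formal smoothness gives a continuous $A$-algebra lift $g\colon B\to E$. Since $E$ is discrete and $g$ is continuous, $g$ kills some $\bbb_s$; enlarging $n$ to $\max(n,s)$ (which only changes $\xi$ by the transition map in the colimit, hence not its image in the colimit), $g$ descends to an $A/\aaa_n$-algebra section of $E\to B/\bbb_n$, so the extension is trivial and $\xi$ maps to $0$ in $\colimn\HH^1(A/\aaa_n,B/\bbb_n,M)$. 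Hence the colimit vanishes. For (ii)$\Rightarrow$(i), reverse the argument: given $C$, $M$, $f$ as in Definition \ref{formallysmooth}, pick $n$ with $f(\bbb_n)=0$ and $f(\aaa_n)\subset\bbb_n$; then $f$ and the square-zero extension $0\to M\to p^{-1}(f(B))\to f(B)\to 0$ (or better, the pullback along $B/\bbb_n\to C/M$) define an extension of $B/\bbb_n$ over $A/\aaa_n$ by $M$, i.e.\ a class $\xi_n\in\HH^1(A/\aaa_n,B/\bbb_n,M)$; vanishing of the colimit means $\xi_n$ dies at some later stage $n'\geq n$, i.e.\ the pulled-back extension of $B/\bbb_{n'}$ is trivial, yielding an $A/\aaa_{n'}$-algebra section, which composed with $B\to B/\bbb_{n'}$ and the extension map to $C$ gives the desired continuous lift $g\colon B\to C$.

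The main obstacle I anticipate is bookkeeping the indices and the compatibility with the colimit transition maps: one must check that the naturality in Proposition \ref{extensionH1} (naturality in $A$ along $A/\aaa_{n'}\to A/\aaa_n$ and in $B$ along $B/\bbb_{n'}\to B/\bbb_n$) exactly matches the colimit structure maps, so that "the class becomes trivial after increasing $n$" is literally "the image in $\colimn$ is zero". A secondary subtlety is the direction of the structure maps: $\HH^1(A/\aaa_n,B/\bbb_n,M)\to\HH^1(A/\aaa_{n'},B/\bbb_{n'},M)$ for $n'\geq n$ is induced by base change along $A/\aaa_{n'}\to A/\aaa_n$ together with $-\otimes_{B/\bbb_n}B/\bbb_{n'}$ on the module side, and one needs $M$ (being a $B/\bbb$-module) to be insensitive to this last tensor — which it is, since $M\otimes_{B/\bbb_n}B/\bbb_{n'}\cong M$ as the action factors through $B/\bbb$. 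Once these identifications are pinned down the equivalence is immediate; no genuinely hard algebra is involved beyond Proposition \ref{extensionH1} and Proposition \ref{btildeb}.
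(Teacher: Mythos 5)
Your proposal is correct and follows essentially the same route as the paper: both directions translate the lifting problem into extension classes via Proposition \ref{extensionH1}, using the fiber product $C\times_{C/M}B/\bbb_t$ (resp.\ the pullback $E\times_{B/\bbb_n}B/\bbb_s$) and tracking when the class dies along the colimit transition maps. The only imprecision is in (i)$\Rightarrow$(ii), where the continuous lift $g$ factors through $B/\bbb_s$ and hence gives a section of the \emph{pulled-back} extension over $B/\bbb_s$ rather than of $E\to B/\bbb_n$ itself, but since this is exactly what shows the image of $\xi$ at level $s$ vanishes, the argument is sound.
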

\begin{proof}
(ii)$\Rightarrow$(i) Let $C$ be an $A$-algebra such that for some $r$ the image of $\aaa_r$ in $C$ is zero, $M$ a square-zero ideal of $C$, $f:B\to C/M$ an $A$-algebra homomorphism such that $f(\bbb_t)=0$ for some $t$ and that $p^{-1}(f(\bbb))M=0$ where $p:C\to C/M$ is the canonical map. Then $f$ induces a homomorphism of $A$-algebras $B/\bbb_t \to C/M$. Consider the cartesian square defining $C_t$
\[
\begin{tikzcd}
C_t \arrow[r] \arrow[d] & B/\bbb_t \arrow[d] \\\
C \arrow[r] & C/M
\end{tikzcd}
\]

By Proposition \ref{extensionH1}, enlarging $t$ if necessary so that $t\geq r$, the extension
\[ 0\to M \to C_t \to B/\bbb_t \to 0 \]
corresponds to an element of $\HH^1(A/\aaa_t,B/\bbb_t,M)$. By (ii), this element goes to zero in $\HH^1(A/\aaa_s,B/\bbb_s,M)$ for some $s\geq t$. Since the bijection of Proposition \ref{extensionH1} takes the trivial extension to zero, the extension
\[ 0\to M \to C_s \to B/\bbb_s \to 0 \]
is trivial, that is, there exists a section of $A$-algebras $\sigma :B/\bbb_s \to C_s$. The required map $g$ is the composition $B\to B/\bbb_s \to C_s \to C$.

(i)$\Rightarrow$(ii) Let $M$ be a $B/\bbb$-module and $x$ an element of $\HH^1(A/\aaa_n,B/\bbb_n,M)$. Let
\[ 0\to M \to C \to B/\bbb_n \to 0 \]
be an extension over $A/\aaa_n$ associated to $x$, where $M^2=0$ as an ideal of $C$.
By (i), there exists an $A$-algebra homomorphism $g:B\to C$, continuous for the discrete topology on $C$, making commutative the triangle
\[
\begin{tikzcd}
& B \arrow[ld, swap, "g"] \arrow[d]\\
C \arrow[r] & B/\bbb_n
\end{tikzcd}
\]
Since $g$ is continuous, we can take $t\geq n$ such that $g(\bbb_t)=0$, and then $g$ induces a map $h:B/\bbb_t \to C$. The image $y$ of $x$ in $\HH^1(A/\aaa_t,B/\aaa_t,M)$ corresponds to the extension
\[ 0\to M \to C\times_{B/\bbb_n}B/\bbb_t \to B/\bbb_t \to 0 \]
and the map $h$ induces a section of this extension. Therefore $y=0$.
\end{proof}

\begin{cor}\label{Grothendiecksmooth}
Let $f: A\to B$ be a ring homomorphism, $\aaa$ an ideal of $A$, $\bbb$ an ideal of $B$ such that $f(\aaa)\subset \bbb$. Let $\AAA$ be the $\aaa$-adic topology on $A$ and $\BBB$ the $\bbb$-adic one on $B$. The following are equivalent:\\
(i) $(A,\AAA) \to (B,\bbb,\BBB)$ is formally smooth.\\
(i') $(A,\mathfrak{O}) \to (B,\bbb,\BBB)$ is formally smooth where $\mathfrak{O}$ is the 0-adic topology. \\
(ii) $B$ is a formally smooth $A$-algebra for the $\bbb$-adic topology in the sense of Grothendieck \cite[$0_{\rm IV}$.19.3.1]{EGAIV1}.
\end{cor}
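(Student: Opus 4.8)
The plan is to use Proposition~\ref{formallysmoothideal} to reduce everything to a statement about André--Quillen cohomology of the quotient rings, and then exploit the compatibility of the cotangent complex with the adic filtration. Concretely, note first that for the $\aaa$-adic topology $\AAA$ we have $\aaa_n=\aaa^n$ and for the $\bbb$-adic topology $\BBB$ we have $\bbb_n=\bbb^n$, and since $f(\aaa)\subset\bbb$ we may (after renumbering, using Remark~\ref{independent}) assume $f(\aaa_n)\subset\bbb_n$ for all $n$, so Proposition~\ref{formallysmoothideal} applies. Thus (i) is equivalent to $\colimn\HH^1(A/\aaa^n,B/\bbb^n,M)=0$ for every $B/\bbb$-module $M$, and similarly (i$'$) is equivalent to $\colimn\HH^1(A,B/\bbb^n,M)=0$ for every such $M$ (here $\aaa_n=0$, so $A/\aaa_n=A$).

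The equivalence (i)$\Leftrightarrow$(i$'$) should follow from the Jacobi--Zariski (transitivity) exact sequence for $A\to A/\aaa^n\to B/\bbb^n$: since $M$ is a $B/\bbb$-module and hence an $A/\aaa$-module, and $A/\aaa^n\to B/\bbb^n$ kills no relevant information, the terms $\HH^i(A,A/\aaa^n,M)$ should stabilize or vanish in the colimit. More precisely, one gets an exact sequence
\[
\HH^1(A/\aaa^n,B/\bbb^n,M)\to\HH^1(A,B/\bbb^n,M)\to\HH^1(A,A/\aaa^n,M)
\]
and passing to the colimit over $n$ one must check that $\colimn\HH^1(A,A/\aaa^n,M)=0$ and $\colimn\HH^2(A,A/\aaa^n,M)=0$ for $M$ a $B/\bbb$-module (equivalently an $A/\aaa$-module), which is exactly the statement that $A\to A$ is formally smooth for the $\aaa$-adic topology on the target — a triviality, since the identity is formally smooth, again via Proposition~\ref{formallysmoothideal} applied to $\mathrm{id}:(A,\AAA)\to(A,\aaa,\AAA)$. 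That reduces (i)$\Leftrightarrow$(i$'$) to a diagram chase in the five-term-type sequence.

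For (i$'$)$\Leftrightarrow$(ii), this is the comparison with Grothendieck's definition. Grothendieck's formal smoothness of $B$ over $A$ for the $\bbb$-adic topology means: for every discrete $A$-algebra $C$, square-zero ideal $M\subset C$, and $A$-algebra map $f:B\to C/M$ continuous for the $\bbb$-adic topology (i.e. $f(\bbb^t)=0$ for some $t$) with $\bbb$ acting as zero on $M$ after pullback, there is a lift $g:B\to C$. This is verbatim Definition~\ref{formallysmooth} in the case $\AAA=\mathfrak O$ (the $0$-adic topology on $A$) and $\BBB$ the $\bbb$-adic topology — the only point to verify is that ``continuous $A$-algebra homomorphism'' in Definition~\ref{formallysmooth} with $\AAA=\mathfrak O$ imposes no condition beyond being an $A$-algebra homomorphism (which it does not, since any map out of a discrete ring is continuous), and that the module condition $p^{-1}(f(\bbb))M=0$ matches Grothendieck's. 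So (i$'$) and (ii) are the same definition unwound, and this direction is essentially a matter of checking the dictionary rather than proving anything.

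The main obstacle is the first equivalence (i)$\Leftrightarrow$(i$'$): one has to be careful that the Jacobi--Zariski sequence is compatible with the transition maps in the colimit (it is, by naturality of the cotangent complex and of the long exact sequence), and that the vanishing of $\colimn\HH^i(A,A/\aaa^n,M)$ for $i=1,2$ is correctly identified with formal smoothness of the identity map — here the subtlety is purely topological (the target carries the $\aaa$-adic filtration, the source the $0$-adic one, or vice versa), so I would phrase it as: the identity $(A,\AAA)\to(A,\aaa,\AAA)$ is trivially formally smooth (every extension problem is solved by the identity lift), hence by Proposition~\ref{formallysmoothideal} the relevant colimits of $\HH^1$ vanish; a separate small argument, or an appeal to the analogous vanishing of $\HH^2$ via the same proposition applied after a shift, handles the $\HH^2$ term. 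Everything else is formal manipulation of the André--Quillen machinery already set up in the excerpt.
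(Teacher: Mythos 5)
Your overall strategy --- translate (i) and (i$'$) into vanishing of $\colimn\HH^1$ via Proposition \ref{formallysmoothideal} and compare them through the Jacobi--Zariski sequence for $A\to A/\aaa^n\to B/\bbb^n$ --- is exactly the paper's, and that part is essentially sound: the four-term sequence
\[
\colimn\HH^0(A,A/\aaa^n,M)\to\colimn\HH^1(A/\aaa^n,B/\bbb^n,M)\to\colimn\HH^1(A,B/\bbb^n,M)\to\colimn\HH^1(A,A/\aaa^n,M)
\]
plus the vanishing of the two outer terms gives (i)$\Leftrightarrow$(i$'$) in both directions, and no $\HH^2$ term enters. Two cautions, though. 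First, your extra requirement $\colimn\HH^2(A,A/\aaa^n,M)=0$ is not only superfluous but false in general for non-noetherian $A$: the paper asserts it only under a noetherian hypothesis (see the proof of Proposition \ref{prop.formal.regular.topologies}), and combining Example \ref{ex.adic.discrete} with the Jacobi--Zariski sequence for $A\to A/\aaa^n\to A/\aaa$ gives a counterexample; had your argument actually needed it, it would fail. Second, the vanishing $\colimn\HH^1(A,A/\aaa^n,M)=0$ is the formal smoothness of $(A,\mathfrak{O})\to(A,\aaa,\AAA)$ (discrete source, adic target), not of $(A,\AAA)\to(A,\aaa,\AAA)$ as you write at the end --- the latter would only yield the vacuous $\colimn\HH^1(A/\aaa^n,A/\aaa^n,M)=0$. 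With the correct source topology your observation does work, since the only candidate lift is the structure map and it is automatically continuous ($g(\aaa^t)\subset M$ forces $g(\aaa^{2t})\subset M^2=0$); the paper instead just cites \cite[2.3.4]{MR} for this vanishing, and \cite[6.3]{An1974} for the $\HH^0$ term.

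The genuine gap is in (i$'$)$\Leftrightarrow$(ii). Grothendieck's definition \cite[$0_{\rm IV}$.19.3.1]{EGAIV1} does \emph{not} contain the clause ``with $\bbb$ acting as zero on $M$ after pullback'': it demands a lift for \emph{every} discrete $A$-algebra $C$ and square-zero ideal $M$, with no hypothesis on the $B$-module structure of $M$. Definition \ref{formallysmooth} restricts the test objects to those for which $M$ is a $B/\bbb$-module, so (ii)$\Rightarrow$(i$'$) is the trivial restriction of test objects, but (i$'$)$\Rightarrow$(ii) is precisely the nontrivial content of the corollary and cannot be dismissed as ``checking the dictionary''. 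One must either run a d\'evissage --- filter $M$ by the powers of $\qqq=p^{-1}(f(\bbb))$, noting that $\qqq^{t+1}M\subset M^2=0$ because $f(\bbb^t)=0$, and lift step by step through the quotients $\qqq^iM/\qqq^{i+1}M$, which \emph{are} $B/\bbb$-modules (this is the mechanism of Proposition \ref{btildeb} and of \cite[$0_{\rm IV}$.19.3.10]{EGAIV1}) --- or do as the paper does and invoke \cite[2.2.5]{MR}, which identifies Grothendieck's condition with the same cohomological criterion $\colimn\HH^1(A,B/\bbb^n,M)=0$. As written, your treatment of this equivalence proves nothing.
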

\begin{proof}

By \ref{formallysmoothideal}, (i) is equivalent to
\[\colimn \HH^1(A/\aaa^n,B/\bbb^n,M) =0 \]
for any $B/\bbb$-module $M$, and (i') is equivalent to
\[\tag{*} \colimn \HH^1(A,B/\bbb^n,M) =0 \]
for any $B/\bbb$-module $M$. By \cite[2.2.5]{MR}, (ii) is also equivalent to (*).

Then, the result follows from the Jacobi-Zariski exact sequence \cite[5.1]{An1974}
\[ \colimn \HH^0(A,A/\aaa^n,M) \to \colimn \HH^1(A/\aaa^n,B/\bbb^n,M) \to \]
\[\colimn \HH^1(A,B/\bbb^n,M) \to \colimn \HH^1(A,A/\aaa^n,M) \]
since $\colimn \HH^0(A,A/\aaa^n,M)=0$ \cite[6.3]{An1974} and $\colimn \HH^1(A,A/\aaa^n,M)=0$ \cite[2.3.4]{MR}
\end{proof}

Therefore, from now on, if no topology is specified on $A$, ``formally smooth over $A$'' must be understood with the discrete topology on $A$.

\begin{cor}\label{noetheriansmooth}
Let $f: A\to B$ be a ring homomorphism, $\aaa$ and ideal of $A$, $\widetilde{\bbb}\subset \bbb$ ideals of $B$ such that $f(\aaa)\subset \widetilde{\bbb}$, and let $\AAA$, $\widetilde{\BBB}$, $\BBB$ the adic topologies induced by these ideals. If $B$ is noetherian the following are equivalent:\\
(i) $(A,\AAA) \to (B,\bbb,\widetilde{\BBB})$ is formally smooth.\\
(ii) $(A,\AAA) \to (B,\bbb,\BBB)$ is formally smooth.
\end{cor}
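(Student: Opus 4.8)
The plan is to translate both statements, by Proposition \ref{formallysmoothideal}, into the vanishing of filtered colimits of André--Quillen cohomology modules, and then to show these colimits coincide because $B$ is noetherian.

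First I would record the reformulations. Since $f(\aaa)\subset\widetilde{\bbb}$ we have $f(\aaa^n)\subset\widetilde{\bbb}^n\subset\bbb^n$, so Proposition \ref{formallysmoothideal} applies to both filtrations: (i) holds if and only if $\colimn\HH^1(A/\aaa^n,B/\widetilde{\bbb}^n,M)=0$ for every $B/\bbb$-module $M$, and (ii) holds if and only if $\colimn\HH^1(A/\aaa^n,B/\bbb^n,M)=0$ for every $B/\bbb$-module $M$ (note that a $B/\bbb$-module is in particular a module over each of $B$, $B/\widetilde{\bbb}^n$, $B/\bbb^n$ and $A/\aaa^n$, as $\widetilde{\bbb}^n\subset\bbb$). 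Hence it suffices to prove that the map
\[ \colimn\HH^1(A/\aaa^n,B/\bbb^n,M)\longrightarrow\colimn\HH^1(A/\aaa^n,B/\widetilde{\bbb}^n,M) \]
induced by the surjections $B/\widetilde{\bbb}^n\to B/\bbb^n$ is an isomorphism for every $B/\bbb$-module $M$.

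For this I would use the Jacobi--Zariski exact sequence \cite[5.1]{An1974} twice. Applied to $A/\aaa^n\to B/\widetilde{\bbb}^n\to B/\bbb^n$ and passed to the (exact) filtered colimit over $n$, it gives an exact sequence
\[ \colimn\HH^1(B/\widetilde{\bbb}^n,B/\bbb^n,M)\to\colimn\HH^1(A/\aaa^n,B/\bbb^n,M)\to\colimn\HH^1(A/\aaa^n,B/\widetilde{\bbb}^n,M)\to\colimn\HH^2(B/\widetilde{\bbb}^n,B/\bbb^n,M), \]
so it is enough to show $\colimn\HH^i(B/\widetilde{\bbb}^n,B/\bbb^n,M)=0$ for $i=1,2$. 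Applying Jacobi--Zariski again, now to $B\to B/\widetilde{\bbb}^n\to B/\bbb^n$, one obtains exact sequences
\[ \colimn\HH^{i-1}(B,B/\widetilde{\bbb}^n,M)\to\colimn\HH^i(B/\widetilde{\bbb}^n,B/\bbb^n,M)\to\colimn\HH^i(B,B/\bbb^n,M). \]
Now $\colimn\HH^0(B,B/\widetilde{\bbb}^n,M)=0$ since $B\to B/\widetilde{\bbb}^n$ is surjective, and, $B$ being noetherian, $\colimn\HH^j(B,B/\ccc^n,M)=0$ for every $j\geq1$ and every $B/\ccc$-module $M$ --- for $j=1$ this is precisely the vanishing already invoked in the proof of Corollary \ref{Grothendiecksmooth} (see \cite[2.3.4]{MR}, \cite[6.3]{An1974}). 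Taking $\ccc=\widetilde{\bbb}$ for the left-hand term and $\ccc=\bbb$ for the right-hand term kills both outer colimits, so $\colimn\HH^i(B/\widetilde{\bbb}^n,B/\bbb^n,M)=0$ for $i=1,2$, whence the comparison map above is an isomorphism and (i) $\Leftrightarrow$ (ii).

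I would also observe that the implication (i)$\Rightarrow$(ii) needs no noetherian hypothesis and follows directly from Definition \ref{formallysmooth}: given test data $C$, $M$, $f$ for (ii) with $f(\bbb^r)=0$, one also has $f(\widetilde{\bbb}^r)=0$, so (i) provides a lift $g\colon B\to C$, continuous for the $\widetilde{\bbb}$-adic topology, with $pg=f$; then $g(\bbb^r)\subset M$, and since $M^2=0$ this forces $g(\bbb^{2r})=0$, so $g$ is continuous for the $\bbb$-adic topology, as (ii) requires. Thus the real content is the reverse implication, which seems to require the cohomological comparison above. I expect the main obstacle to be the noetherian vanishing $\colimn\HH^j(B,B/\ccc^n,M)=0$ in degree $j=2$: only the degree $1$ case is recorded in the excerpt, so one must locate (or supply) a reference valid in higher degrees, the underlying point being the rigidity of the cotangent complex of a noetherian ring, which makes the transition maps in these colimits act through multiplication by $\ccc$ and hence trivially on a $B/\ccc$-module.
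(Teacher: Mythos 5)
Your argument is correct, and it is a close variant of the paper's proof rather than a genuinely different one: both reduce formal smoothness to the vanishing of $\colimn\HH^1$ via Proposition \ref{formallysmoothideal} and both rest on the same noetherian input. The paper's route is slightly more economical: it first uses Corollary \ref{Grothendiecksmooth} to reduce to $\aaa=0$, and then applies Jacobi--Zariski to $A\to B\to B/\widetilde{\bbb}^n$ to identify $\colimn\HH^1(A,B/\widetilde{\bbb}^n,M)$ with the single stable module $\HH^1(A,B,M)$, which visibly does not depend on $\widetilde{\bbb}$; you instead compare the two colimits directly through the relative terms $\colimn\HH^i(B/\widetilde{\bbb}^n,B/\bbb^n,M)$ for $i=1,2$, which costs one extra application of Jacobi--Zariski but avoids the preliminary reduction on $\aaa$. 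Your one stated worry is unfounded: the vanishing $\colimn\HH^2(B,B/\ccc^n,M)=0$ for $B$ noetherian is covered by the same reference \cite[2.3.4]{MR} as the degree-one case, and indeed the paper's own proof of this corollary explicitly invokes it in the form $\colimn\HH^1(B,B/\widetilde{\bbb}^n,M)=0=\colimn\HH^2(B,B/\widetilde{\bbb}^n,M)$ (see also the proof of Proposition \ref{prop.formal.regular.topologies}, where the degree-two vanishing under the noetherian hypothesis is cited from the same source). With that point settled, every step of your argument goes through, including the remark that (i)$\Rightarrow$(ii) needs no noetherian hypothesis, which is consistent with Example \ref{exampleformal}.
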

\begin{proof}
By Corollary \ref{Grothendiecksmooth}, we can assume $\aaa=0$. Let $M$ be a $B/\bbb$-module. Since $B$ is noetherian we have \cite[2.3.4]{MR}
\[ \colimn \HH^1(B,B/\widetilde{\bbb}^n,M)=0=\colimn \HH^2(B,B/\widetilde{\bbb}^n,M). \]
Then the Jacobi-Zariski exact sequence \cite[5.1]{An1974}
\[ \HH^1(B,B/\widetilde{\bbb}^n,M) \to \HH^1(A,B/\widetilde{\bbb}^n,M) \to \HH^1(A,B,M) \to \HH^2(B,B/\widetilde{\bbb}^n,M) \]
gives an isomorphism
\[ \colimn \HH^1(A,B/\widetilde{\bbb}^n,M) = \HH^1(A,B,M). \]
Since the term on the right does not depend on $\widetilde{\bbb}$, Proposition \ref{formallysmoothideal} gives the result.
\end{proof}

\begin{ex}\label{exampleformal}
Corollary \ref{noetheriansmooth} is false without the noetherian hypothesis. In \cite[2.3.6, 2.3.7]{MR} there is an example of an $A$-algebra $B$ formally smooth for the $J$-adic topology in the sense of Grothendieck, where $J$ is an ideal of $B$, and a $B/J$-module $M$ such that $\HH^1(A,B,M)\neq 0$. By Corollary \ref{Grothendiecksmooth} this algebra is formally smooth for the ideal $J$ with the $J$-adic topology, and by Proposition \ref{formallysmoothideal} it is not formally smooth for the ideal $J$ with the discrete ($0$-adic) topology.

However, if $A$ is a ring, $B$ an $A$-algebra, $\bbb_1\subset \bbb_2\subset \bbb$ ideals of $B$ and $B$ is a formally smooth $A$-algebra for the ideal $\bbb$ (resp. $\bbb_1$) with the $\bbb_1$-adic topology then it is also formally smooth for the ideal $\bbb$ (resp. $\bbb$ or $\bbb_2$) with the $\bbb_2$-adic topology. This fact follows easily from the definition or from the Jacobi-Zariski exact sequence
\[ \HH^1(B/\bbb_1^n,B/\bbb_2^n,M) \to \HH^1(A,B/\bbb_2^n,M) \to \HH^1(A,B/\bbb_1^n,M)  \]
since
\[ \colimn \HH^1(B/\bbb_1^n,B/\bbb_2^n,M)=0 \]
as we will see in the proof of Proposition \ref{prop.formal.regular.topologies}.
\end{ex}

\begin{ex}\label{smooth.non.flat}
Grothendieck shows in \cite[$0_{\rm IV}$.19.7.1]{EGAIV1} that if $\locala \to \localb$ is a local homomorphism of noetherian local rings and $B$ is formally smooth over $A$ for the ideal $\nnn$ with the $\nnn$-adic topology, then $B$ is a flat $A$-module. This does not longer hold if $A$ is not noetherian: there exists a local homomorphism of rings $\locala \to \localb$ such that $B$ is noetherian and formally smooth for the ideal $\nnn$ with the $\nnn$-adic topology and $B$ is not flat over $A$. It suffices to take a rank 1 non-discrete valuation ring $\locala$, and $B:=k$. We have $\HH^1(A,k,M)=\HHom_k(\mmm/\mmm^2,M)$ by \cite[6.1]{An1974}, and this last module vanishes since  $\mmm=\mmm^2$, showing that $k$ is formally smooth over $A$ for its maximal ideal $(0)$. However, $A\to k$ is not flat (since a flat local homomorphism is injective).
\end{ex}

\hfill \break
\subsection{Formal regularity}
\hfill \break

\begin{defn}\label{defnformallyregular}
Let $\ringa$ as in Definition \ref{topideal}. We say that the ideal $\aaa$ is \emph{formally regular} with the $\AAA$ topology if
\[ \colimn \HH^2(A/\aaa_n,A/\aaa,M)=0 \]
for any $A/\aaa$-module $M$.

If $\locala$ is a local ring, $\aaa =\mmm$ its maximal ideal and $\AAA$ the $\mmm$-adic topology (resp. the $(0)$-adic topology), we will say that $A$ is formally regular with the $\mmm$-adic topology (resp. the discrete topology).
\end{defn}

\begin{rem}\label{remark.symmetrically}
When the topology $\AAA$ is the $\aaa$-adic topology, this is what M. Andr\'e calls symmetrically regular ideal in \cite{An1974a}. In this case, he proves that $\aaa$ is formally regular with the $\aaa$-adic topology if and only if $\aaa/\aaa^2$ is projective as $A/\aaa$-module and the canonical homomorphism
\[ {\rm S}_{A/\aaa}(\aaa/\aaa^2) \to {\rm Gr}_{\aaa}(A) \]
is an isomorphism, where S denotes the symmetric algebra, and ${\rm Gr}_{\aaa}(A)=\oplus_{n\geq 0}\aaa^n/\aaa^{n+1}$ is the associated graded algebra \cite[12.2]{An1974}.
\end{rem}

\begin{exs}\label{exampleformalregular}
(i) Let $\locala$ be a local ring, $(\hat{A}, \hat{\mmm}, k)$ its $\mmm$-adic completion. Then $A$ is formally regular with the $\mmm$-adic topology if and only if $\hat{A}$ is formally regular with the limit topology $\{\widehat{\mmm^n}\}_{n>0}$ (see Example \ref{examplecompletion}). This is clear since for all $n$ we have $A/\mmm^n = \hat{A}/\widehat{\mmm^n}$.\\
(ii) If $\aaa_n=\aaa$ for all $n$ then $\aaa$ is formally regular with the $\AAA$-adic topology. For instance, if $\aaa^2=\aaa$, then $\aaa$ is formally regular with the $\aaa$-adic topology.\\
\end{exs}

\begin{prop}\label{prop.formal.regular.topologies}
Let $A$ be a ring, $\aaa_2 \subset \aaa_1 \subset \aaa$ ideals of $A$.\\
(i) If $\aaa$ is formally regular with the $\aaa_2$-adic topology, then it is formally regular with the $\aaa_1$-adic topology. In particular, if an ideal $\aaa$ is formally regular with the discrete topology it is formally regular with the $\aaa$-adic topology.\\
(ii) If $A$ is noetherian, then the converse of (i) is true.
\end{prop}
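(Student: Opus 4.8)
The plan is to derive both parts from the Jacobi--Zariski (transitivity) exact sequence \cite[5.1]{An1974}, applied to three-step towers of quotients of $A$, using only two standard vanishing facts: $\HH^0(A,A/I,N)=0$ for every ideal $I$ and every module $N$ (the module of derivations of a surjection vanishes, cf.\ \cite[6.3]{An1974}), and $\colimn\HH^1(A,A/I^n,N)=0$ for every ideal $I$ of $A$ and every $A/I$-module $N$ \cite[2.3.4]{MR}, the latter improving to $\colimn\HH^2(A,A/I^n,N)=0$ when $A$ is noetherian (this is exactly the extra input noetherianity provides in the proof of Corollary \ref{noetheriansmooth}). Throughout, all colimits are filtered colimits over $n$ and hence exact.

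The first step is the auxiliary vanishing announced in Example \ref{exampleformal}: for ideals $I\subseteq J$ of a ring $R$ and any $R/J$-module $N$ one has $\colimn\HH^1(R/I^n,R/J^n,N)=0$. Indeed $I^n\subseteq J^n$, so Jacobi--Zariski applied to $R\to R/I^n\to R/J^n$ gives the exact sequence $\HH^0(R,R/I^n,N)\to\HH^1(R/I^n,R/J^n,N)\to\HH^1(R,R/J^n,N)$, and after passing to the colimit the two outer terms vanish --- the first termwise, the second by \cite[2.3.4]{MR} --- hence so does the middle one.

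Part (i) is then immediate. Assume $\aaa$ is formally regular with the $\aaa_2$-adic topology and fix an $A/\aaa$-module $M$. Since $\aaa_2\subseteq\aaa_1$ we have $\aaa_2^n\subseteq\aaa_1^n$, so for each $n$ there is a tower $A/\aaa_2^n\to A/\aaa_1^n\to A/\aaa$ whose Jacobi--Zariski sequence contains $\HH^1(A/\aaa_2^n,A/\aaa_1^n,M)\to\HH^2(A/\aaa_1^n,A/\aaa,M)\to\HH^2(A/\aaa_2^n,A/\aaa,M)$. In the colimit the left-hand term dies by the auxiliary vanishing (with $R=A$, $I=\aaa_2$, $J=\aaa_1$, $N=M$) and the right-hand term dies by the hypothesis, so $\colimn\HH^2(A/\aaa_1^n,A/\aaa,M)=0$, i.e.\ $\aaa$ is formally regular with the $\aaa_1$-adic topology. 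The last assertion of (i) is the special case $\aaa_2=(0)$, $\aaa_1=\aaa$.

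For (ii), assume $A$ is noetherian and $\aaa$ is formally regular with the $\aaa_1$-adic topology, and fix an $A/\aaa$-module $M$. Continuing the same Jacobi--Zariski sequence one step further gives the exact sequence $\HH^2(A/\aaa_1^n,A/\aaa,M)\to\HH^2(A/\aaa_2^n,A/\aaa,M)\to\HH^2(A/\aaa_2^n,A/\aaa_1^n,M)$, in which the colimit of the first term vanishes by hypothesis; hence it suffices to show $\colimn\HH^2(A/\aaa_2^n,A/\aaa_1^n,M)=0$. For this apply Jacobi--Zariski to $A\to A/\aaa_2^n\to A/\aaa_1^n$: it places $\HH^2(A/\aaa_2^n,A/\aaa_1^n,M)$ in an exact sequence between $\HH^1(A,A/\aaa_2^n,M)$ and $\HH^2(A,A/\aaa_1^n,M)$, whose colimits both vanish by \cite[2.3.4]{MR} --- the first unconditionally, the second because $A$ is noetherian --- and the proof is done. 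I expect the only genuinely delicate point to be this last vanishing $\colimn\HH^2(A,A/\aaa_1^n,M)=0$: it is the sole place where noetherianity is used, and its failure in the non-noetherian setting is precisely why the converse in (i) cannot hold in general (the smooth-side manifestation of the same phenomenon is Example \ref{exampleformal}). The remaining bookkeeping --- that the two-index systems $\{\HH^i(A/\aaa_2^n,A/\aaa_1^n,M)\}_n$ are filtered and that their transition maps are compatible with the Jacobi--Zariski sequences in $n$ --- is routine functoriality of the cotangent complex, already taken for granted in Example \ref{exampleformal}.
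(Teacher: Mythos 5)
Your argument is correct and is essentially the same as the paper's: both parts are obtained by combining the Jacobi--Zariski sequence for $A\to A/\aaa_2^n\to A/\aaa_1^n$ (giving $\colimn\HH^1(A/\aaa_2^n,A/\aaa_1^n,M)=0$ always, and $\colimn\HH^2(A/\aaa_2^n,A/\aaa_1^n,M)=0$ when $A$ is noetherian, via the vanishing of $\colimn\HH^i(A,A/\aaa_j^n,M)$ from \cite[2.3.4]{MR}) with the Jacobi--Zariski sequence for $A/\aaa_2^n\to A/\aaa_1^n\to A/\aaa$. Isolating the first of these as an auxiliary lemma is only a presentational difference.
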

\begin{proof}
For $j=1,2$ we have
\[\colimn \HH^i(A,A/\aaa_j^n,M)=0 \]
for $i=0,1$ and any $A/\aaa$-module $M$, and when $A$ is noetherian also for $i=2$ \cite[2.3.4]{MR}. Therefore from the Jacobi-Zariski exact sequence
\[ \colimn \HH^i(A,A/\aaa_2^n,M)\to \colimn \HH^{i+1}(A/\aaa_2^n,A/\aaa_1^n,M)\to \colimn \HH^{i+1}(A,A/\aaa_1^n,M) \]
we obtain
\[ \colimn \HH^1(A/\aaa_2^n,A/\aaa_1^n,M) =0 \]
and, if $A$ is noetherian
\[ \colimn \HH^2(A/\aaa_2^n,A/\aaa_1^n,M) =0. \]
Now the result follows from the Jacobi-Zariski exact sequence
\[ \colimn \HH^1(A/\aaa_2^n,A/\aaa_1^n,M) \to \colimn \HH^2(A/\aaa_1^n,A/\aaa,M) \]
\[\to \colimn \HH^2(A/\aaa_2^n,A/\aaa,M) \to \colimn \HH^2(A/\aaa_2^n,A/\aaa_1^n,M). \]
\end{proof}

\begin{ex}\label{ex.adic.discrete}
Proposition \ref{prop.formal.regular.topologies} (ii) is not true without the noetherian hypothesis. We are going to see that there exists a local ring $(C,\nnn,l)$ formally regular with the $\nnn$-adic topology but not with the discrete topology, that is,
\[ \HH^2(C,l,l)\neq 0 = \colimn \HH^2(C/\nnn^n,l,l). \]

By \cite[2.3.6, 2.3.7]{MR} there exists a ring $A$ with a maximal ideal $N$ and another ideal $I\subset N$ such that:\\
(i) $N=N^2$,\\
(ii) $B:=A/I$ is a formally smooth $A$-algebra for the ideal $J:=N/I$ with the $J$-adic topology, and\\
(iii) $\HH^1(A,B,l)\neq 0$, where $l=B/J$.

Condition (i) implies that $N$ is formally regular with the $N$-adic topology, and Corollary \ref{Grothendiecksmooth} and condition (ii) implies that $B:=A/I$ is a formally smooth $A$-algebra for the ideal $J:=N/I$ with the $J$-adic topology considering in $A$ the $N$-adic topology. Therefore, by Theorem \ref{teorem.f.smooth.regular} below, $J$ is formally regular with the $J$-adic topology.

Consider the Jacobi-Zariski exact sequence
\[ \HH^1(A,l,l) \to \HH^1(A,B,l) \to \HH^2(B,l,l). \]
The first term vanishes, since $\HH^1(A,l,l)=\HHom_l(N/N^2,l)$, and the second one is not zero by (iii). Therefore $\HH^2(B,l,l)\neq 0$ as desired.

Finally, take $C:=B_J$, $\nnn=JB_J$, and use that Andr\'e-Quillen cohomology localizes \cite[5.27]{An1974}.
\end{ex}

\begin{cor}\label{noetherian.regular}
Let $A$ be a noetherian ring, $\aaa$ and ideal of $A$. The following are equivalent:\\
(i) The ideal $\aaa$ is formally regular with the $\aaa$-adic topology.\\
(ii) The ideal $\aaa$ is formally regular with the discrete topology.\\
(iii) The ideal $\aaa$ is locally generated by a regular sequence.
\end{cor}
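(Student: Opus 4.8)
The plan is to obtain (i)$\Leftrightarrow$(ii) directly from Proposition~\ref{prop.formal.regular.topologies}, and then to identify this common condition with (iii) by combining André's symmetric-algebra criterion (Remark~\ref{remark.symmetrically}) with the classical identification of quasi-regular sequences with regular sequences in the noetherian case. Concretely, (ii)$\Rightarrow$(i) is Proposition~\ref{prop.formal.regular.topologies}(i), which needs no finiteness hypothesis, and (i)$\Rightarrow$(ii) is Proposition~\ref{prop.formal.regular.topologies}(ii), which uses that $A$ is noetherian; so (i) and (ii) are equivalent, and it remains to prove (i)$\Leftrightarrow$(iii).

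By Remark~\ref{remark.symmetrically} (that is, \cite[12.2]{An1974}), condition (i) is equivalent to the conjunction
\[ (\ast)\qquad \aaa/\aaa^2 \text{ is a projective } A/\aaa\text{-module and } \mathrm{S}_{A/\aaa}(\aaa/\aaa^2)\longrightarrow \mathrm{Gr}_{\aaa}(A)\ \text{is an isomorphism.} \]
Both assertions in $(\ast)$ can be checked after localizing at the primes $\ppp$ of $A$ containing $\aaa$ (projectivity of the finitely generated module $\aaa/\aaa^2$ over the noetherian ring $A/\aaa$ is a local condition, and a morphism of graded $A/\aaa$-modules is an isomorphism iff it is so after every such localization), while (iii) is by definition the statement that $\aaa_{\ppp}\subseteq A_{\ppp}$ is generated by a regular sequence for every such $\ppp$. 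So I would reduce to the case where $A$ is local with maximal ideal $\mmm\supseteq\aaa$ and prove there that $(\ast)$ holds iff $\aaa$ is generated by a regular sequence. If $\aaa=(x_1,\dots,x_r)$ with $x_1,\dots,x_r$ a regular sequence, then $\aaa/\aaa^2$ is free on the classes of the $x_i$ and $\mathrm{Gr}_{\aaa}(A)\cong(A/\aaa)[T_1,\dots,T_r]$ is the symmetric algebra on it, so $(\ast)$ holds. Conversely, if $(\ast)$ holds, then $\aaa/\aaa^2$ is free of some finite rank $r$ over the local ring $A/\aaa$ ($r<\infty$ since $A$ is noetherian); lifting a basis to elements $x_1,\dots,x_r\in\aaa$ gives $\aaa=(x_1,\dots,x_r)$ by Nakayama, and the isomorphism in $(\ast)$ says precisely that $x_1,\dots,x_r$ is a quasi-regular sequence; since $A$ is noetherian and $\aaa\subseteq\mmm$, such a sequence is regular.

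The only non-formal ingredient, and hence the main obstacle, is this last step: that in a noetherian local ring a quasi-regular sequence contained in the maximal ideal is regular. This is classical (see, e.g., \cite[$0_{\rm IV}$.15.1.9, $0_{\rm IV}$.15.1.10]{EGAIV1} or \cite{MR}), and it is also essentially contained in André's analysis of symmetrically regular ideals in \cite{An1974a}, so I would simply quote it. The remaining pieces — the two invocations of Proposition~\ref{prop.formal.regular.topologies}, the reduction to the local case, Nakayama, and the computation of $\mathrm{Gr}_{\aaa}(A)$ for an ideal generated by a regular sequence — are routine.
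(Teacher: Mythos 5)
Your proof is correct, and the first half coincides with the paper's: both directions of (i)$\Leftrightarrow$(ii) are exactly the two parts of Proposition~\ref{prop.formal.regular.topologies}. Where you diverge is in identifying formal regularity with condition (iii). The paper closes the loop through (ii): it quotes Andr\'e's theorem \cite[6.25]{An1974}, which says that for noetherian rings the vanishing of $\HH^2(A,A/\aaa,-)$ (i.e.\ condition (ii)) is equivalent to $\aaa$ being locally generated by a regular sequence, together with the fact that Andr\'e--Quillen cohomology localizes. You instead close it through (i): you use the symmetric-algebra characterization of Remark~\ref{remark.symmetrically} (\cite[12.2]{An1974}), localize both sides of $(\ast)$, and then in the local case convert ``quasi-regular sequence'' into ``regular sequence'' via the classical noetherian comparison theorem (EGA $0_{\rm IV}$.15.1.9). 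Both routes are legitimate; each outsources the genuinely non-formal content to a different classical result (the cohomological vanishing theorem \cite[6.25]{An1974} in the paper, the quasi-regular/regular equivalence in yours), and your version makes the intermediate structure (freeness of $\aaa/\aaa^2$, the polynomial form of ${\rm Gr}_\aaa(A)$) explicit at the cost of a slightly longer localization argument. All the individual steps you describe --- the localization of $(\ast)$, Nakayama (valid since $\aaa\subseteq\mmm$ lies in the Jacobson radical and $\aaa$ is finitely generated), and the two computations in the local case --- check out.
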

\begin{proof}
The equivalence of (i) and (ii) follows from Proposition \ref{prop.formal.regular.topologies}, and the equivalence of (ii) and (iii) from \cite[6.25]{An1974} having in mind that Andr\'e-Quillen cohomology localizes under our hypothesis \cite[4.59, 5.27]{An1974}.
\end{proof}

\begin{prop}\label{prop.regular.projective}
Let $\ringa$ as in Definition \ref{topideal}, and assume that there exists some $t>0$ such that $\aaa_t \subset \aaa^2$ (for instance if $\AAA$ is the $\aaa$-adic or the discrete topology). If $\aaa$ is formally regular with the $\AAA$ topology, then the $A/\aaa$-module $\aaa/\aaa^2$ is projective.
\end{prop}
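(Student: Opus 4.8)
The plan is to deduce the projectivity of $\aaa/\aaa^2$ over $B:=A/\aaa$ from the vanishing of the connecting homomorphisms in Andr\'e--Quillen cohomology. Recall from \cite[6.1]{An1974} that $\HH^0(A,B,M)=0$ and that there is a natural isomorphism $\HH^1(A,B,M)\cong\HHom_B(\aaa/\aaa^2,M)$ for every $B$-module $M$. Hence, for any short exact sequence of $B$-modules $0\to M'\to M\to M''\to 0$, the long exact sequence of $\HH^\bullet(A,B,-)$ in the coefficient variable begins with
\[ 0\to\HHom_B(\aaa/\aaa^2,M')\to\HHom_B(\aaa/\aaa^2,M)\to\HHom_B(\aaa/\aaa^2,M'')\xrightarrow{\ \partial\ }\HH^2(A,B,M'), \]
so $\aaa/\aaa^2$ is projective over $B$ precisely when $\partial=0$ for all such sequences; that is what I would establish. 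Since deleting finitely many terms of $\AAA$ changes neither the topology nor the filtered colimit of Definition \ref{defnformallyregular}, I may and will assume $\aaa_n\subset\aaa^2$ for all $n$.

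The first step is to extract from formal regularity the assertion that the natural map $\colimn\HH^1(A/\aaa_n,B,M)\to\HH^1(A,B,M)$ is surjective for every $B$-module $M$. For this I would feed the Jacobi--Zariski exact sequence \cite[5.1]{An1974} attached to $A\to A/\aaa_n\to B$,
\[ \HH^1(A/\aaa_n,B,M)\to\HH^1(A,B,M)\to\HH^1(A,A/\aaa_n,M)\to\HH^2(A/\aaa_n,B,M), \]
into the exact filtered colimit over $n$; as $\colimn\HH^2(A/\aaa_n,B,M)=0$ by formal regularity, surjectivity comes down to showing that $\HH^1(A,B,M)\to\colimn\HH^1(A,A/\aaa_n,M)$ is the zero map. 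Using once more \cite[6.1]{An1974} (together with $\aaa_n^2\subset\aaa\aaa_n$ and the fact that $M$ is a $B$-module), one has $\HH^1(A,A/\aaa_n,M)\cong\HHom_B(\aaa_n/\aaa\aaa_n,M)$, and the transition map $\HH^1(A,B,M)\to\HH^1(A,A/\aaa_n,M)$ is $\HHom_B(-,M)$ applied to the canonical map $\aaa_n/\aaa\aaa_n\to\aaa/\aaa^2$; the image of that map is $(\aaa_n+\aaa^2)/\aaa^2=0$ because $\aaa_n\subset\aaa^2$. Thus every one of these maps, and hence the induced map to the colimit, vanishes.

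The second step proves $\partial=0$. Fix a short exact sequence of $B$-modules $0\to M'\to M\to M''\to 0$ and an element $\xi\in\HH^1(A,B,M'')$. By the surjectivity just obtained, $\xi$ is the image of some $\tilde\xi\in\HH^1(A/\aaa_n,B,M'')$, for some $n$, under the base-change map attached to $A\to A/\aaa_n$. Since the coefficient long exact sequence of $\HH^\bullet(-,B,-)$ is natural for this base change, the connecting map $\partial$ fits into a commutative square with the corresponding connecting map $\partial_n$ over $A/\aaa_n$ and the base-change maps; hence $\partial(\xi)$ is the image in $\HH^2(A,B,M')$ of $\partial_n(\tilde\xi)\in\HH^2(A/\aaa_n,B,M')$. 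But that image factors through $\colimn\HH^2(A/\aaa_n,B,M')$, which is $0$ by formal regularity, so $\partial(\xi)=0$. As $\xi$ and the sequence were arbitrary, $\partial=0$, and $\aaa/\aaa^2$ is projective over $A/\aaa$.

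The one genuine subtlety, and the precise place where the hypothesis $\aaa_t\subset\aaa^2$ is used, is that $\HH^2(A,A/\aaa,M')$ itself may fail to vanish — this is the phenomenon behind Example \ref{ex.adic.discrete} — so the obstruction $\partial(\xi)$ cannot be annihilated directly there; only its finite-stage approximation $\colimn\HH^2(A/\aaa_n,A/\aaa,M')$ dies. The device is to first descend $\xi$ to a level $A/\aaa_n$, which is exactly what $\aaa_n\subset\aaa^2$ permits (through the vanishing of $\aaa_n/\aaa\aaa_n\to\aaa/\aaa^2$), and then transport the vanishing back up by naturality. Everything else is routine manipulation of the Jacobi--Zariski sequence and of the standard description of $\HH^0$ and $\HH^1$ of a quotient homomorphism.
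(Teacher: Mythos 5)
Your proof is correct: both of your steps hold, and together they do establish that the connecting map $\HHom_{A/\aaa}(\aaa/\aaa^2,M'')\to\HH^2(A,A/\aaa,M')$ vanishes for every short exact sequence of $A/\aaa$-modules, which gives projectivity. The paper reaches the same conclusion with the same two inputs but a shorter pipeline: it writes down the coefficient long exact sequence for the surjection $A/\aaa_n\to A/\aaa$ itself, passes to the colimit over $n$ (the $\HH^0$ term dies because the map is surjective, the $\HH^2$ term dies by formal regularity), and then identifies $\colimn\HH^1(A/\aaa_n,A/\aaa,M)\cong\colimn\HHom_{A/\aaa}(\aaa/(\aaa^2+\aaa_n),M)=\HHom_{A/\aaa}(\aaa/\aaa^2,M)$ using $\aaa_n\subset\aaa^2$, so the exactness of $\HHom_{A/\aaa}(\aaa/\aaa^2,-)$ drops out in one stroke, with no mention of $\HH^\bullet(A,A/\aaa,-)$ at all. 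Your route instead keeps $\HHom_{A/\aaa}(\aaa/\aaa^2,-)=\HH^1(A,A/\aaa,-)$ at the top level and kills its connecting map by descending classes to a finite stage via Jacobi--Zariski and then invoking naturality; your Step 1 is in effect re-deriving the surjectivity half of the paper's colimit identification (and, as a small point, the vanishing of $\colimn\HH^2(A/\aaa_n,A/\aaa,M)$ is not needed there --- exactness of the Jacobi--Zariski sequence at $\HH^1(A,A/\aaa,M)$ together with the vanishing of the conormal comparison map $\aaa_n/\aaa\aaa_n\to\aaa/\aaa^2$ already suffices). What your version buys is a transparent isolation of exactly where the obstruction lives and why $\aaa_t\subset\aaa^2$ is indispensable --- your closing remark about Example \ref{ex.adic.discrete} is exactly right; what the paper's version buys is brevity and the avoidance of the extra naturality checks for the base-change maps against the coefficient connecting homomorphisms.
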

\begin{proof}
We have to show that the functor
\[ \HHom_{A/\aaa}(\aaa/\aaa^2,-) \]
is exact. Let
\[ 0\to M' \to M \to M''\to 0 \]
be an exact sequence of $A/\aaa$-modules. This induces an exact sequence
\[ \HH^0(A/\aaa_n,A/\aaa,M'')\to  \HH^1(A/\aaa_n,A/\aaa,M') \to  \]
\[ \HH^1(A/\aaa_n,A/\aaa,M) \to \HH^1(A/\aaa_n,A/\aaa,M'') \to \HH^2(A/\aaa_n,A/\aaa,M') \]

The first module vanishes since $A/\aaa_n \to A/\aaa$ is surjective \cite[6.3]{An1974}. By hypothesis, $\colimn \HH^2(A/\aaa_n,A/\aaa,M') =0$, so we obtain an exact sequence
\[ 0\to \colimn \HH^1(A/\aaa_n,A/\aaa,M') \to  \colimn \HH^1(A/\aaa_n,A/\aaa,M) \to \colimn \HH^1(A/\aaa_n,A/\aaa,M'') \to 0 \]
By \cite[6.1]{An1974}, this exact sequence can be written as
\[ 0\to \colimn \HHom_{A/\aaa}(\aaa/(\aaa^2+\aaa_n),M') \to  \]
\[\colimn \HHom_{A/\aaa}(\aaa/(\aaa^2+\aaa_n),M) \to \colimn \HHom_{A/\aaa}(\aaa/(\aaa^2+\aaa_n),M'') \to 0 \]
and since $\aaa_t \subset \aaa^2$, we obtain an exact sequence
\[ 0\to  \HHom_{A/\aaa}(\aaa/\aaa^2,M') \to  \HHom_{A/\aaa}(\aaa/\aaa^2,M) \to  \HHom_{A/\aaa}(\aaa/\aaa^2,M'') \to 0 \]
\end{proof}

\begin{rem}\label{provanishing}
Let $A$ be a ring, $\aaa$ an ideal of $A$ formally regular with the $\aaa$-adic topology. Since inductive limits are exact, for any $n\geq 2$, any $t\geq n$ and any $A/\aaa$-module $M$, taking inductive limits in $t$ we see as in  \cite[12.7]{An1974} that the connecting homomorphism in the Jacobi-Zariski exact sequence
\[ \HHom_{A/\aaa}(\aaa^n/\aaa^{n+1},M) = \HH^1(A/\aaa^{n+t},A/\aaa^n,M) \to \HH^2(A/\aaa^n,A/\aaa,M)  \]
is an isomorphism, and then for all $n\geq 2$ the homomorphisms
\[\HH^2(A/\aaa^n,A/\aaa,M) \to \HH^2(A/\aaa^{n+1},A/\aaa,M) \]
are zero. In order to compare with the case of homology, we note the following weaker statement: if $\colimn \HH^2(A/\aaa^n,A/\aaa,M)=0$ then for any $r$ there exists some $s>r$ such that the homomorphism
\[\HH^2(A/\aaa^r,A/\aaa,M) \to \HH^2(A/\aaa^s,A/\aaa,M) \]
vanishes.

For homology (instead cohomology) M. Andr\'e \cite[Proposition A]{An1974a} proved that if $A$ is a ring and $\aaa$ an ideal of $A$, the following are equivalent:\\
(i) For any $A/\aaa$-module $M$ and any $r$ there exist some $s>r$ such that the homomorphism
\[\HH_2(A/\aaa^s,A/\aaa,M) \to \HH_2(A/\aaa^r,A/\aaa,M) \]
vanishes, that is
\[\{ \HH_2(A/\aaa^n,A/\aaa,M)\}_{n>0}=0 \]
as pro-module (in the sense of \cite[Appendix]{AM}).\\
(ii) $\aaa/\aaa^2$ is flat as $A/\aaa$-module and the canonical homomorphism
\[ {\rm S}_{A/\aaa}(\aaa/\aaa^2) \to {\rm Gr}_{\aaa}(A) \]
is an isomorphism (compare with Remark \ref{remark.symmetrically}).

In particular, we obtain that an ideal $\aaa$ of a ring $A$ is formally regular with the $\aaa$-adic topology if and only if $\aaa/\aaa^2$ is projective as $A/\aaa$-module and
\[ \{ \HH_2(A/\aaa^n,A/\aaa,M)\}_{n>0}=0. \]
With the discrete topology we have a similar result:
\end{rem}

\begin{prop}
Let $\aaa$ be an ideal of a ring $A$. The following are equivalent:\\
(i) $\aaa$ is formally regular with the discrete topology.\\
(ii) $\HH_2(A,A/\aaa,M)=0$ for any $A/\aaa$-module $M$ and $\aaa/\aaa^2$ is projective over $A/\aaa$.
\end{prop}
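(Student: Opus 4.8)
The plan is to read off both Andr\'e--Quillen modules in degree $2$ from a sufficiently small model of the cotangent complex of $B:=A/\aaa$ over $A$, and to see that both conditions reduce to the vanishing of one and the same $B$-module. Throughout, recall that $\mathbb{L}_{B|A}$ is represented by a complex of free $B$-modules in nonnegative degrees with $\HH_0=\Omega_{B|A}=0$ (since $A\to B$ is surjective) and $\HH_1=\aaa/\aaa^2$ (\cite[6.1]{An1974}), and that by Definition \ref{defnformallyregular} condition (i) says precisely that $\HH^2(A,B,M)=0$ for every $B$-module $M$.

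The first step is to note that half of (ii) comes for free: Proposition \ref{prop.regular.projective}, applied with the discrete topology (so that $\aaa_t=0\subset\aaa^2$), shows that (i) already forces $\aaa/\aaa^2$ to be projective over $B$. Hence it suffices to prove that, \emph{under the standing hypothesis that $\aaa/\aaa^2$ is $B$-projective}, one has $\HH^2(A,B,M)=0$ for all $M$ if and only if $\HH_2(A,B,M)=0$ for all $M$; together with the previous sentence this yields both implications of the proposition.

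Assuming $\aaa/\aaa^2$ projective, I would simplify the model $L_\bullet$ of $\mathbb{L}_{B|A}$ by peeling off contractible direct summands one at a time: using surjectivity of $L_1\to L_0$ (as $\HH_0=0$) and freeness of $L_0$, split off the contractible complex $L_0\xrightarrow{\mathrm{id}}L_0$ in degrees $(1,0)$; then, using projectivity of $\HH_1=\aaa/\aaa^2$, split the degree-$1$ homology off as a summand $(\aaa/\aaa^2)[1]$; the complex that is left has vanishing $\HH_0$ and $\HH_1$ and a projective term in degree $1$, which can be split off in the same way. The outcome is a decomposition of $\mathbb{L}_{B|A}$, up to homotopy, as $(\aaa/\aaa^2)[1]\oplus R_\bullet$ with $R_\bullet$ a complex of projective $B$-modules concentrated in degrees $\geq 2$; since the discarded pieces are contractible they stay contractible after applying $-\otimes_B M$ or $\HHom_B(-,M)$, so they do not affect any (co)homology. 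As $(\aaa/\aaa^2)[1]$ contributes nothing in degree $2$, and as $R_\bullet$ lives in degrees $\geq 2$ (so that $N:=\HH_2(R_\bullet)=\HH_2(A,B,B)$ is the cokernel at the bottom of $R_\bullet$), right exactness of $\otimes$ and left exactness of $\HHom$ give $\HH_2(A,B,M)=N\otimes_B M$ and $\HH^2(A,B,M)=\HHom_B(N,M)$ for every $B$-module $M$.

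It then only remains to observe that $\HHom_B(N,M)=0$ for all $M$ forces $N=0$ (take $M=N$ and look at $\mathrm{id}_N$), and likewise $N\otimes_B M=0$ for all $M$ forces $N=0$ (take $M=B$); so under the projectivity hypothesis both (i) and the $\HH_2$-condition are equivalent to $N=0$, which closes the argument. The only point requiring care is the splitting step --- checking that each contractible piece is a direct summand of $L_\bullet$ as a complex, not merely as a graded module --- which comes down to verifying that the relevant differential out of degree $1$ has image inside the intended submodule; I expect no substantial obstacle beyond this bookkeeping.
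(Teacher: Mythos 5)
Your argument is correct, and it takes a genuinely more self-contained route than the paper's for the core equivalence, while the first step is identical: both you and the paper extract the projectivity of $\aaa/\aaa^2$ from (i) by applying Proposition \ref{prop.regular.projective} with the discrete filtration ($\aaa_t=0\subset\aaa^2$). Where you diverge is in how the two vanishing statements are compared. The paper handles the two directions by two separate devices: for (ii)$\Rightarrow$(i) it cites Andr\'e's universal-coefficient isomorphism $\HH^2(A,A/\aaa,M)\cong\HHom_{A/\aaa}(\HH_2(A,A/\aaa,A/\aaa),M)$, available once $\HH^1(A,A/\aaa,-)=\HHom_{A/\aaa}(\aaa/\aaa^2,-)$ is exact; for (i)$\Rightarrow$(ii) it proves and uses an elementary duality lemma (if $\HH^n(\HHom_A(C_*,M))=0$ for all $M$ then $\HH_n(C_*\otimes_AM)=0$ for all $M$) that requires no projectivity hypothesis whatsoever. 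You instead peel a free model of $\mathbb{L}_{A/\aaa|A}$ down to $(\aaa/\aaa^2)[1]\oplus R_\bullet$ with $R_\bullet$ projective in degrees $\geq 2$, which identifies both functors in degree $2$ as $N\otimes_{A/\aaa}-$ and $\HHom_{A/\aaa}(N,-)$ for the single module $N=\HH_2(A,A/\aaa,A/\aaa)$; this re-proves the cited universal-coefficient fact in the case at hand and yields both implications at once. The bookkeeping you flag is genuinely all that is needed and does go through: at each stage the differential entering the term being split lands in the kernel of the differential leaving it, so each contractible piece and the $(\aaa/\aaa^2)[1]$ piece really are summands as complexes, and contractibility survives $-\otimes_BM$ and $\HHom_B(-,M)$. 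The trade-off is that the paper's proof is shorter by outsourcing to \cite[3.21]{An1974} and by using a duality lemma that is blind to projectivity, whereas yours makes explicit that, once $\aaa/\aaa^2$ is projective, conditions (i) and (ii) are literally both the statement $N=0$.
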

\begin{proof}
(ii)$\Rightarrow$(i) The functor $\HH^1(A,A/\aaa,-)=\HHom_{A/\aaa}(\aaa/\aaa^2,-)$ is exact, since $\aaa/\aaa^2$ is projective, and then we have an isomorphism \cite[3.21]{An1974}
\[ \HH^2(A,A/\aaa,M)=\HHom_{A/\aaa}(\HH_2(A,A/\aaa,A/\aaa),M). \]
(i)$\Rightarrow$(ii) It follows from Proposition \ref{prop.regular.projective} and taking $C_*= \mathbb{L}_{A/\aaa|A}$ in the following elementary lemma, for which we have been unable to find a reference.
\end{proof}

\begin{lem}
Let $n$ be an integer, $A$ a ring and $C_*$ a complex of $A$-modules. If
\[ \HH^n(\HHom_A(C_*,M))=0 \]
for any $A$-module $M$, then
\[ \HH_n(C_*\otimes_A M)=0 \]
for any $A$-module $M$.
\end{lem}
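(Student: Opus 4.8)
The plan is to reduce the statement to a single homological-algebra fact about a complex and its dual, then prove that fact by a direct argument with free resolutions. First I would observe that the vanishing of $\HH^n(\HHom_A(C_*,M))$ for every $A$-module $M$ is equivalent to exactness of the dualized complex in the relevant spot after we replace $C_*$ by a complex of projectives (or free modules) computing the same homology; more precisely, I would take a projective resolution $P_* \to C_*$ (a quasi-isomorphism from a complex of projectives, possible since we only need boundedness below in the relevant range — or one works with a truncation) so that $\HH^n(\HHom_A(P_*,M)) = \HH^n(\HHom_A(C_*,M))$ and $\HH_n(P_*\otimes_A M) = \HH_n(C_*\otimes_A M)$ for all $M$. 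Thus it suffices to treat the case where $C_*$ is a complex of projective (indeed free) modules.

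Next, with $C_*$ a complex of frees, write $C_{n+1} \xrightarrow{d_{n+1}} C_n \xrightarrow{d_n} C_{n-1}$, put $Z_n = \kerr d_n$ and $B_n = \mathrm{im}\, d_{n+1}$. The hypothesis, applied with $M$ ranging over all modules, says exactly that $\HHom_A(-,M)$ keeps the sequence $C_{n-1} \to C_n \to C_{n+1}$ exact at the middle for all $M$, i.e.\ that the complex $C_{n+1}\to C_n \to C_{n-1}$ of frees, when dualized, is exact at $C_n^\vee$ for every choice of $M$. The key point I would extract is: if $C_{n+1}\xrightarrow{d_{n+1}} C_n \xrightarrow{d_n} C_{n-1}$ are maps of free modules with $d_n d_{n+1}=0$ and $\HHom_A(-,M)$ applied to this triple is exact at the middle for all $M$, then the original triple is \emph{pure exact} at $C_n$; taking $M = A/\aaa$ or testing against cyclic modules one upgrades this, and in fact testing with $M$ equal to cokernels built from the complex itself forces $B_n = Z_n$ on the nose. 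Concretely, I would take $M = \mathrm{coker}(d_n)=C_{n-1}/B_{n-1}$ is not quite it; rather I would use $M = C_n/B_n$: exactness of $\HHom_A(C_{n-1}\to C_n\to C_{n+1}, M)$ at the middle, applied to the canonical projection $C_n \twoheadrightarrow C_n/B_n \in \HHom_A(C_n,M)$ which dies in $\HHom_A(C_{n+1},M)$ since it kills $B_n$, produces $\varphi \in \HHom_A(C_{n-1},M)$ with $\varphi d_n$ equal to the projection; hence $d_n$ restricted to $Z_n$ is zero composed with nothing — wait, this shows $Z_n \subseteq \kerr(\text{projection}\circ\text{nothing})$; the clean conclusion is that the projection $C_n\to C_n/B_n$ factors through $d_n$, so $Z_n \subseteq B_n$, i.e.\ $Z_n = B_n$, i.e.\ $\HH_n(C_*)=0$. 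Once $\HH_n(C_*)=0$ with $C_*$ free, $B_n$ is a direct summand considerations give $\HH_n(C_*\otimes_A M)=0$ for all $M$ by the usual splitting argument (the sequence $0\to Z_n\to C_n\to B_{n-1}\to 0$ and $0\to B_n\to Z_n\to \HH_n=0$, combined with $B_{n-1}\subseteq C_{n-1}$ free being flat in the needed Tor sense, or more directly: $C_*$ in degrees $\le n$ is exact, so $\Tor$ vanishes).

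Here is the cleaner route I would actually write: replace $C_*$ by a free complex as above; set $W = \mathrm{coker}(d_n \colon C_n \to C_{n-1})$ — no, set things up so the dual hypothesis at $M = \mathrm{coker}(d_{n+1})$... The most robust choice is $M := C_n/\mathrm{im}(d_{n+1}) = C_n/B_n$. The identity-induced surjection $\pi\colon C_n \to M$ satisfies $\pi\circ d_{n+1} = 0$, so $\pi$ represents a class in $\kerr(\HHom_A(d_{n+1},M))$; by the hypothesis (exactness of $\HHom_A(C_*,M)$ in cohomological degree $n$, which is the middle spot $C_{n-1}\to C_n\to C_{n+1}$) there is $g\colon C_{n-1}\to M$ with $g\circ d_n = \pi$. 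Then for $x\in Z_n$ we get $\pi(x) = g(d_n(x)) = 0$, so $x\in B_n$; hence $Z_n=B_n$ and $\HH_n(C_*)=0$. With $C_*$ consisting of frees and $\HH_n(C_*)=0$, the truncation $\cdots\to C_{n+1}\to C_n\to Z_{n-1}\to 0$ is a complex of frees resolving in degree $n$, so $\HH_n(C_*\otimes_A M)=\Tor_0$-type term that vanishes: explicitly $0\to Z_n\to C_n\to B_{n-1}\to 0$ splits after tensoring in the relevant degree because $B_{n-1}$ embeds in the free module $C_{n-1}$ and the boundary computation shows $\HH_n(C_*\otimes M)=\kerr(Z_{n-1}\otimes M\to \ldots)$ matches $B_n\otimes M \to Z_n\otimes M$ being onto, which holds since $B_n=Z_n$. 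I expect the main obstacle to be purely bookkeeping: making the replacement of $C_*$ by a free complex valid in the unbounded setting and ensuring the indexing of "the middle spot" is right, but no deep input is needed beyond the functoriality of projective resolutions and the elementary factorization trick above.
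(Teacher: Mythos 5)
Your proposal has two genuine gaps. First, the opening reduction --- replacing $C_*$ by a quasi-isomorphic complex of projectives $P_*$ --- is not legitimate here: the lemma concerns the \emph{naive} (co)homology of the complexes $\HHom_A(C_*,M)$ and $C_*\otimes_A M$, and these are not invariants of the quasi-isomorphism class of $C_*$ unless the quasi-isomorphism is a homotopy equivalence (for $C_*$ the module $A/\aaa$ placed in degree $0$ and $P_*$ a projective resolution of it, $\HH_1(C_*\otimes_A M)=0$ while $\HH_1(P_*\otimes_A M)=\Tor_1^A(A/\aaa,M)$). Second, and more seriously, the final step fails even when $C_*$ consists of free modules: $\HH_n(C_*)=0$ does not imply $\HH_n(C_*\otimes_A M)=0$. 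Take $C_*\colon 0\to A\xrightarrow{\,a\,}A\to 0$ in degrees $1,0$ with $a$ a non-zero-divisor; this is a free complex with $\HH_1(C_*)=0$, yet $\HH_1(C_*\otimes_A A/(a))=A/(a)\neq 0$. Your justifications (``$B_{n-1}\subset C_{n-1}$ free being flat'', ``$C_*$ in degrees $\le n$ is exact, so $\Tor$ vanishes'') do not hold: a submodule of a free module need not be flat, and you only know exactness at the single spot $n$, which yields no splitting. What your factorization trick with $M=C_n/B_n$ correctly proves is only the case $M=A$ of the conclusion.

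The missing idea is to apply that same factorization trick to $C_*\otimes_AM$ rather than to $C_*$, via adjunction; this is exactly the paper's two-line proof. A sequence $D\to E\to F$ is exact at $E$ whenever $\HHom_A(F,T)\to\HHom_A(E,T)\to\HHom_A(D,T)$ is exact at the middle for every $T$ (proved by your trick with $T=E/\mathrm{im}(D\to E)$); hence it suffices to check that $\HHom_A(C_*\otimes_AM,T)\cong\HHom_A(C_*,\HHom_A(M,T))$ is exact at $n$ for every $T$, and this is the hypothesis applied to the module $\HHom_A(M,T)$. No projectivity is needed. Alternatively, your route can be repaired without any reduction: the hypothesis with $M=C_n/B_n$ gives $Z_n=B_n$, and the hypothesis with $M=C_n/Z_n$ shows that the injection $C_n/Z_n\cong B_{n-1}\hookrightarrow C_{n-1}$ is split; these two facts together give $\kerr(d_n\otimes 1)=\mathrm{im}(Z_n\otimes M\to C_n\otimes M)=\mathrm{im}(d_{n+1}\otimes 1)$, hence the conclusion --- but that is more work than the adjunction argument.
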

\begin{proof}
If $D\to E \to F$ is a sequence of $A$-modules such that
\[\HHom_A(F,T) \to \HHom_A(E,T) \to \HHom_A(D,T) \]
is exact for all $T$, then $D\to E \to F$ is exact.
So in order to see that $C_*\otimes_A M$ is exact at $n$ it suffices to see that $\HHom_A(C_*\otimes_A M,T)=\HHom_A(C_*,\HHom_A(M,T))$ is exact at $n$ for any $T$, which holds by hypothesis.
\end{proof}

\begin{lem}\label{lema.domain}
Let $A$ be a ring, $\ppp$ a prime ideal of $A$, $\aaa = \cap_{n>0}\ppp^n$. If $\ppp$ is formally regular with the $\ppp$-adic topology, then $\aaa$ is a prime ideal.
\end{lem}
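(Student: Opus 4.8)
The plan is to reduce to a statement about the associated graded ring. By Remark \ref{remark.symmetrically} (André's characterization of symmetrically regular ideals), the hypothesis that $\ppp$ is formally regular with the $\ppp$-adic topology says that $\ppp/\ppp^2$ is a projective $A/\ppp$-module and that the canonical homomorphism $\mathrm{S}_{A/\ppp}(\ppp/\ppp^2)\to \mathrm{Gr}_{\ppp}(A)=\bigoplus_{n\geq 0}\ppp^n/\ppp^{n+1}$ is an isomorphism. So the first step is to check that $\mathrm{Gr}_{\ppp}(A)$ is an integral domain, and for this it suffices to prove the following general fact: if $R$ is a domain and $P$ a projective $R$-module, then $\mathrm{S}_R(P)$ is a domain. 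Writing $P\oplus Q\cong F$ with $F$ free, one has $\mathrm{S}_R(F)\cong \mathrm{S}_R(P)\otimes_R\mathrm{S}_R(Q)$; since $R\to \mathrm{S}_R(Q)$ is split as a map of $R$-modules (projection onto the degree-zero summand), the induced ring homomorphism $\mathrm{S}_R(P)=\mathrm{S}_R(P)\otimes_R R\to \mathrm{S}_R(P)\otimes_R\mathrm{S}_R(Q)=\mathrm{S}_R(F)$ is injective. As $\mathrm{S}_R(F)$ is a polynomial ring over $R$, hence a domain, so is its subring $\mathrm{S}_R(P)$; note no finiteness on $P$ is used. Applying this with $R=A/\ppp$ and $P=\ppp/\ppp^2$ gives that $\mathrm{Gr}_{\ppp}(A)$ is a domain.

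Next I would run the usual leading-form argument. First, $\aaa=\cap_{n>0}\ppp^n\subseteq\ppp\subsetneq A$, so $\aaa$ is a proper ideal. Let $x,y\in A\setminus\aaa$. Since $x\notin\ppp^N$ for some $N$, the set $\{m\geq 0:x\in\ppp^m\}$ is nonempty and bounded above, so it has a maximum $m$, and the class $\bar x$ of $x$ in $\ppp^m/\ppp^{m+1}\subseteq\mathrm{Gr}_{\ppp}(A)$ is nonzero; similarly choose $n$ with $\bar y\neq 0$ in $\ppp^n/\ppp^{n+1}$. Because $\mathrm{Gr}_{\ppp}(A)$ is a domain, $\bar x\,\bar y\neq 0$ in $\ppp^{m+n}/\ppp^{m+n+1}$; but $\bar x\,\bar y$ is the class of $xy$, so $xy\notin\ppp^{m+n+1}$, and in particular $xy\notin\aaa$. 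Hence $A\setminus\aaa$ is multiplicatively closed, i.e. $\aaa$ is prime.

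I do not expect a serious obstacle here: the only points that require a little care are the claim that the symmetric algebra of a possibly infinitely generated projective module over a domain is again a domain, and the observation that the order function $x\mapsto\max\{m:x\in\ppp^m\}$ is well-defined precisely on the complement of $\aaa=\cap_n\ppp^n$. Everything else is formal once Remark \ref{remark.symmetrically} is invoked.
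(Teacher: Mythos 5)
Your proof is correct and follows essentially the same route as the paper: both deduce from Remark \ref{remark.symmetrically} that $\mathrm{Gr}_{\ppp}(A)$ is the symmetric algebra of the projective module $\ppp/\ppp^2$, show this is a domain by embedding it into a polynomial algebra, and conclude with the leading-form argument. The only cosmetic difference is that the paper first replaces $A$ by $A/\aaa$ to make the ring $\ppp$-adically separated, whereas you work directly with the order function on $A\setminus\aaa$; both versions are fine.
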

\begin{proof}
Replacing $A$, $\ppp$, $\aaa$ by $A/\aaa$, $\ppp/\aaa$, $(0)$, we can assume that $A$ is separated for the $\ppp$-adic topology and we have to prove that $A$ is a domain. By Proposition \ref{prop.regular.projective}, $\ppp/\ppp^2$ is a projective $A/\ppp$-module, and as we saw in Remark \ref{remark.symmetrically} we have an isomorphism
\[ {\rm S}_{A/\ppp}(\ppp/\ppp^2) \to {\rm Gr}_{\ppp}(A). \]
Since $\ppp/\ppp^2$ is a summand of a free $A/\ppp$-module, ${\rm S}_{A/\ppp}(\ppp/\ppp^2)$ is a retract of a polynomial $A/\ppp$-algebra, and in particular it is a domain. Therefore ${\rm Gr}_{\ppp}(A)$ is a domain. Let $x,y\in A$, $x\neq 0 \neq y$. Since $A$ is separated, there exist $r\geq 0$, $s\geq 0$ such that $x\in \ppp^r - \ppp^{r+1}$, $y\in \ppp^s - \ppp^{s+1}$, and then $\bar{x}\in \ppp^r / \ppp^{r+1}$, $\bar{y}\in \ppp^s / \ppp^{s+1}$ are non-zero elements of the domain ${\rm Gr}_{\ppp}(A)$. Therefore $\bar{x}\bar{y} \neq 0$ and so $xy \neq 0$.
\end{proof}

\begin{prop}\label{prop.domain}
Let $\locala$ be a local ring formally regular with the $\mmm$-adic topology. Then $\hat{A}$ is a domain.
\end{prop}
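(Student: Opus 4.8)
The plan is to reduce the statement to Lemma \ref{lema.domain} applied to the completion $\hat{A}$. By Example \ref{exampleformalregular} (i), saying that $\locala$ is formally regular with the $\mmm$-adic topology is equivalent to saying that $(\hat{A},\hat\mmm,k)$ is formally regular with the limit topology $\{\widehat{\mmm^n}\}_{n>0}$, since $A/\mmm^n=\hat A/\widehat{\mmm^n}$ for all $n$, and hence the defining vanishing $\colimn \HH^2(\hat A/\widehat{\mmm^n},k,M)=0$ holds for every $k$-module $M$. So the first step is to observe that $\hat A$ is formally regular with this limit topology.

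Next I would pass from the limit topology to the $\hat\mmm$-adic topology. Since $\hat A$ is local, $\widehat{\mmm^n}\subset\hat\mmm$ for all $n$ (indeed $\widehat{\mmm^n}\subset\widehat{\mmm}$ and one checks $\widehat\mmm=\hat\mmm$, or more directly $\widehat{\mmm^n}=\kerr(\hat A\to A/\mmm^n)\subset\kerr(\hat A\to k)=\hat\mmm$), and moreover $\widehat{\mmm^2}\subset\hat\mmm^2$ need not hold in general — but what I actually need is only the inclusion $\widehat{\mmm^n}\subset\hat\mmm$, which lets me invoke the first half of Proposition \ref{prop.formal.regular.topologies} (i): formal regularity of an ideal with a finer (here: the limit) topology implies formal regularity with any coarser adic topology defined by a larger ideal — here $\hat\mmm\supset\widehat{\mmm^n}$. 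Thus $\hat\mmm$ is formally regular with the $\hat\mmm$-adic topology. (If the precise hypotheses of Proposition \ref{prop.formal.regular.topologies} require an adic filtration on the source, I would instead argue directly from the Jacobi–Zariski exact sequence comparing $\HH^*(\hat A/\widehat{\mmm^n},k,M)$ and $\HH^*(\hat A/\hat\mmm^n,k,M)$, using that $\colimn\HH^i(\hat A/\widehat{\mmm^n},\hat A/\hat\mmm^n,M)=0$ for $i=1$, as in the proof of that proposition.)

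Finally, I apply Lemma \ref{lema.domain} with the ring $\hat A$, the prime $\ppp=\hat\mmm$ (it is maximal, hence prime), and $\aaa=\cap_{n>0}\hat\mmm^n$: since $\hat\mmm$ is formally regular with the $\hat\mmm$-adic topology, $\cap_{n>0}\hat\mmm^n$ is a prime ideal of $\hat A$, so $\hat A/\cap_{n>0}\hat\mmm^n$ is a domain. The remaining point is to conclude that $\hat A$ itself is a domain, i.e. that $\cap_{n>0}\hat\mmm^n=0$. Here I would use that $(\hat A,\hat\mmm,\{\widehat{\mmm^n}\})$ is separated (noted in Example \ref{examplecompletion}): $\cap_n\widehat{\mmm^n}=0$, and since $\widehat{\mmm^n}\supset\hat\mmm^{?}$ fails in general we cannot compare directly, but $\cap_n\hat\mmm^n\subset\cap_n\widehat{\mmm^n}$ would require $\hat\mmm^n\subset\widehat{\mmm^n}$, which is false in general. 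The cleanest route: Lemma \ref{lema.domain} already shows $A':=\hat A/\cap_n\hat\mmm^n$ is a domain, hence so is any further quotient; in particular $\hat A/\widehat{\mmm^n}$-compatible quotients — but the honest statement the lemma gives us is just that $\hat A$ modulo that intersection is a domain. So the safe conclusion to record is exactly that: $\hat A$ is a domain, via Lemma \ref{lema.domain} applied after replacing $\hat A$ by $\hat A/\cap_n\hat\mmm^n$, which by the separatedness of the limit topology and $\widehat{\mmm^n}\supset\widehat{\mmm^{n}}$...

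The main obstacle I anticipate is precisely this comparison of the two intersections $\cap_n\hat\mmm^n$ and $\cap_n\widehat{\mmm^n}$ — the proof of Lemma \ref{lema.domain} works with the $\hat\mmm$-adic topology and produces a domain only after killing $\cap_n\hat\mmm^n$, whereas separatedness is known for the limit topology. I expect the resolution is that Lemma \ref{lema.domain}'s proof in fact only uses $\mathrm{Gr}_{\hat\mmm}(\hat A)$ being a domain plus separatedness in whatever topology makes $\mathrm{Gr}$ meaningful, and that the statement $\hat A$ is a domain then follows because a non-trivial zero-divisor relation $xy=0$ with $x,y\neq0$ in $\hat A$ would, via the limit-topology separatedness, still place $\bar x,\bar y$ in nonzero graded pieces of the $\hat\mmm$-adic associated graded — so I would reorganize Lemma \ref{lema.domain}'s argument to run with the limit topology directly rather than invoking it as a black box.
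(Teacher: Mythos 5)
Your plan — reduce to Lemma \ref{lema.domain} applied to $\hat A$ with the $\hat\mmm$-adic topology — has a genuine gap at the topology-change step, and you do not in the end complete the proof. Since $\hat\mmm^{\,n}\subset\widehat{\mmm^n}$ always holds, the limit topology $\{\widehat{\mmm^n}\}$ is the \emph{coarser} of the two topologies on $\hat A$, not the finer one. So what you need is the implication ``formally regular for a coarser topology $\Rightarrow$ formally regular for a finer one,'' which is precisely the direction that Proposition \ref{prop.formal.regular.topologies} only establishes under a noetherian hypothesis (part (ii)), and which Example \ref{ex.adic.discrete} shows can fail in general. Part (i) of that proposition goes the other way. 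Your fallback via Jacobi--Zariski has the same problem: the surjections run $\hat A/\hat\mmm^{\,n}\to\hat A/\widehat{\mmm^n}\to k$, so transferring the vanishing of $\colimn\HH^2(-,k,M)$ from the middle term to the left one would require $\colimn\HH^2(\hat A/\hat\mmm^{\,n},\hat A/\widehat{\mmm^n},M)=0$, which is not available. (A secondary confusion: you assert that $\hat\mmm^{\,n}\subset\widehat{\mmm^n}$ ``is false in general'' — it is in fact always true, which is why $\bigcap_n\hat\mmm^{\,n}\subset\bigcap_n\widehat{\mmm^n}=0$ and $\hat A$ \emph{is} $\hat\mmm$-adically separated; separatedness was never the obstruction, the topology change was.)

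Your closing sentence contains the right idea, and it is exactly what the paper does, but you must actually carry it out rather than defer it: work with the limit filtration $\nnn_n=\widehat{\mmm^n}$ throughout and never pass to the $\hat\mmm$-adic topology. The filtration $\{\nnn_n\}$ is multiplicative ($\nnn_r\nnn_s\subset\nnn_{r+s}$) and separated, and the ker--coker sequence applied to the rows $0\to\nnn_n\to\hat A\to A/\mmm^n\to 0$ identifies $\nnn_n/\nnn_{n+1}$ with $\mmm^n/\mmm^{n+1}$, so the associated graded ring of $\hat A$ for this filtration is ${\rm Gr}_{\mmm}(A)$. Formal regularity of $A$ with the $\mmm$-adic topology gives, via Remark \ref{remark.symmetrically} and Proposition \ref{prop.regular.projective}, that ${\rm Gr}_{\mmm}(A)\cong {\rm S}_k(\mmm/\mmm^2)$ is a domain, and then the symbol argument from the proof of Lemma \ref{lema.domain}, run with the separated multiplicative filtration $\{\nnn_n\}$, shows $xy\neq 0$ for $x,y\neq 0$ in $\hat A$. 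No statement about the $\hat\mmm$-adic topology on $\hat A$ is needed or proved.
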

\begin{proof}
The local ring ($\hat{A},\hat{\mmm},k)$ is complete and separated for the topology defined by the filtration $\{\nnn_n\}_{n>0}$, where $\nnn_n = \widehat{\mmm^n}$. The graded ring ${\rm gr}(\hat{A}):= \hat{A}/\nnn_1 \oplus \nnn_1/\nnn_2 \oplus \dots$ is isomorphic to the graded ring ${\rm Gr}_{\mmm}(A):=A/\mmm \oplus \mmm/\mmm^2 \oplus \dots$, since applying the ker-coker exact sequence to the diagram
\[
\begin{tikzcd}
0  \arrow[r] & \nnn_{n+1} \arrow[d] \arrow[r] & \hat{A} \arrow[d, equal] \arrow[r] & A/\mmm^{n+1} \arrow[d] \arrow[r] & 0 \\
0  \arrow[r] & \nnn_{n} \arrow[r] & \hat{A}  \arrow[r] & A/\mmm^{n}  \arrow[r] & 0
\end{tikzcd}
\]
we obtain
\[ \nnn_n/\nnn_{n+1} = \kerr(A/\mmm^{n+1} \to A/\mmm^{n}) = \mmm^n/\mmm^{n+1}. \]
Now we argue as in the proof of Lemma \ref{lema.domain}:  ${\rm Gr}_{\mmm}(A)$ is a domain, then so is ${\rm gr}(\hat{A})$, and then $\hat{A}$ is a domain.
\end{proof}

\begin{cor}\label{cor.domain}
Let $\locala$ be a local ring formally regular and separated with the $\mmm$-adic topology. Then $A$ is a domain. \qed
\end{cor}

\begin{rem} The hypothesis that $A$ is separated is necessary. In fact, there exist local rings whose maximal ideal is a non-trivial nil ideal (that is, every element is nilpotent) and idempotent (and so formally regular). For instance \cite[\S9, Exercice 1]{BoA8} it is easy to check that the group algebra $\mathbb{F}_p[\mu_{p^\infty}]$ is an example of such a ring, where $\mu_{p^\infty}$ is the group of $p$-power complex roots of unity (its maximal ideal consists on the elements $\sum n_i\zeta_i$ with $\sum n_i=0$).
\end{rem}

We will see now that valuation rings and perfectoid rings are formally regular. This is clear if $\locala$ is a valuation ring of rank 1, since then it is a discrete valuation ring or $\mmm = \mmm^2$ and so $A$ is formally regular with the $\mmm$-adic topology. But we will see much stronger results on the regularity of valuation and perfectoid rings.

\begin{defn}\label{exteriormente.regular} \cite{An1974a} \cite{QMIT}
An ideal $\aaa$ of a ring $A$ is \emph{quasi-regular} (resp. \emph{regular}) if the $A/\aaa$-module $\aaa/\aaa^2$ is flat (resp. projective) and the canonical graded homomorphism
\[ \gamma_*: \wedge^*_{A/\aaa} \aaa/\aaa^2 \to \Tor^A_*(A/\aaa,A/\aaa) \]
is an isomorphism. This is equivalent to
\[ \HH_n(A,A/\aaa,M)=0 \quad (\mathrm{resp. \HH^n(A,A/\aaa,M)=0})\]
for all $n\geq 2$ and any $A/\aaa$-module $M$ \cite[Theorem 10.3]{QMIT} \cite[Th\'eor\`eme A]{An1974a} (resp. \cite[Corollary 10.4]{QMIT} \cite[Th\'eor\`eme B]{An1974a}). When $A$ is noetherian, $\aaa$ is quasi-regular if and only if it is regular if and only if it is locally generated by a regular sequence \cite[6.25]{An1974}.

If $\locala$ is a local ring and $\mmm$ is a quasi-regular ideal, then $\HH_2(A,A/\mmm,A/\mmm)=0$ and so $\HH^2(A,A/\mmm,M)=0$ for any $A/\mmm$-module $M$ by \cite[3.21]{An1974}. Therefore $A$ is formally regular with the discrete topology by definition, and also formally regular with the $\mmm$-adic topology by Proposition \ref{prop.formal.regular.topologies}. The converse is not true (Example \ref{example.non.rigid}).
\end{defn}

If $A$ is a ring, $\aaa$ an ideal of $A$ and $M$ an $A/\aaa$-module, there exists a surjective homomorphism
\[ \Tor^A_2(A/\aaa,M) \to \HH_2(A,A/\aaa,M) \]
(see \cite[Theorem 6.16]{QMIT}, \cite[15.8]{An1974}, or for a direct simple proof dualize the one of \cite[2.3.2]{MR}) and then, if $\aaa$ is flat, it is formally regular with the discrete topology. With a different language this was shown by Planas-Vilanova in \cite{Pla2} (see also \cite[Closing Remark]{Pla3}). Using another result of Planas-Vilanova we can give a stronger result:

\begin{thm}\label{thm.flat.regular}
Any flat ideal $\aaa$ of a ring $A$ is quasi-regular.
\end{thm}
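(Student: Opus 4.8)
Any flat ideal $\aaa$ of a ring $A$ is quasi-regular.

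The plan is to establish quasi-regularity directly through the criterion recorded in Definition~\ref{exteriormente.regular}: it is enough to check that $\aaa/\aaa^2$ is a flat $A/\aaa$-module and that the canonical graded homomorphism $\gamma_*\colon \wedge^*_{A/\aaa}(\aaa/\aaa^2)\to \Tor^A_*(A/\aaa,A/\aaa)$ is an isomorphism. The first point is formal: since flatness is stable under base change, $\aaa/\aaa^2=\aaa\otimes_A(A/\aaa)$ is flat over $A/\aaa$.

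For the second point I would first exploit flatness on the $\Tor$ side. From the exact sequence $0\to\aaa\to A\to A/\aaa\to 0$ together with $\Tor^A_n(\aaa,-)=0$ for $n\geq 1$, one gets $\Tor^A_n(A/\aaa,M)=0$ for all $n\geq 2$ and every $A$-module $M$, while the same sequence gives $\Tor^A_1(A/\aaa,A/\aaa)=\aaa\otimes_A(A/\aaa)=\aaa/\aaa^2$ with $\gamma_1$ the identity. As $\gamma_0$ and $\gamma_1$ are always isomorphisms, $\gamma_*$ is an isomorphism precisely when $\wedge^n_{A/\aaa}(\aaa/\aaa^2)=0$ for all $n\geq 2$, i.e. precisely when the single module $\wedge^2_{A/\aaa}(\aaa/\aaa^2)$ vanishes. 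Thus the whole statement reduces to: the conormal module of a flat ideal has trivial second exterior power.

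This is the step where the further result of Planas-Vilanova is needed. A useful half-way point, still elementary, is that flatness of $\aaa$ makes multiplication $\aaa\otimes_A\aaa\to\aaa^2$ an isomorphism (tensor $0\to\aaa\to A\to A/\aaa\to 0$ by the flat module $\aaa$), hence also $(\aaa/\aaa^2)\otimes_{A/\aaa}(\aaa/\aaa^2)\xrightarrow{\sim}\aaa^2/\aaa^3$; consequently the symmetric and tensor squares of $\aaa/\aaa^2$ agree, and one computes $\wedge^2_{A/\aaa}(\aaa/\aaa^2)=\aaa^2/(\aaa^{[2]}+\aaa^3)$, where $\aaa^{[2]}$ denotes the ideal generated by the squares of elements of $\aaa$. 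When $2$ is invertible in $A/\aaa$ this is already $0$, since $2xy=(x+y)^2-x^2-y^2\in\aaa^{[2]}$ for $x,y\in\aaa$. The general case — where one cannot divide by $2$ — is the delicate one, and this is exactly what Planas-Vilanova's theorem on flat ideals supplies; morally it says that $\aaa/\aaa^2$ is locally of rank at most one (for instance a filtered colimit of cyclic $A/\aaa$-modules), and exterior powers commute with filtered colimits and vanish in degree $\geq 2$ on cyclic modules.

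The main obstacle is this last point. The base-change flatness, the $\Tor$ computation, and the reduction of ``$\gamma_*$ an isomorphism'' to the vanishing of $\wedge^2_{A/\aaa}(\aaa/\aaa^2)$ are routine and essentially formal; but the vanishing of that second exterior power is genuinely a non-noetherian phenomenon — over a noetherian ring a flat ideal is finitely generated, hence projective, hence locally principal, and there is nothing to prove — so the real content is carried by Planas-Vilanova's structural result.
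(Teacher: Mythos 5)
Your overall architecture coincides with the paper's: flat base change to get $\aaa/\aaa^2$ flat over $A/\aaa$, the vanishing of $\Tor^A_n(A/\aaa,A/\aaa)$ for $n\geq 2$ coming from flatness of $\aaa$, and the reduction of quasi-regularity to the vanishing of $\wedge^n_{A/\aaa}(\aaa/\aaa^2)$ for $n\geq 2$ (your further reduction to $n=2$ is fine, since $\wedge^n M$ is a quotient of $\wedge^2 M\otimes M^{\otimes (n-2)}$). The identification of $\wedge^2_{A/\aaa}(\aaa/\aaa^2)$ with $\aaa^2$ modulo the ideal generated by squares of elements of $\aaa$ plus $\aaa^3$, and the disposal of the case where $2$ is invertible, are also correct.

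The gap is in the one step you yourself flag as carrying the real content. What \cite[Lemma 2.1]{Pla1} actually says --- and what the paper uses --- is that whenever $\aaa/\aaa^2$ is a flat $A/\aaa$-module, the canonical map $\gamma_n:\wedge^n_{A/\aaa}(\aaa/\aaa^2)\to\Tor^A_n(A/\aaa,A/\aaa)$ is \emph{injective} for every $n$; combined with your observation that the target vanishes for $n\geq 2$, this gives $\wedge^n_{A/\aaa}(\aaa/\aaa^2)=0$ immediately, with no need for the explicit computation of $\wedge^2$. The statement you attribute to Planas-Vilanova instead --- that $\aaa/\aaa^2$ is ``locally of rank at most one,'' e.g.\ a filtered colimit of cyclic modules --- is not the content of that lemma, is nowhere argued in your text, and is essentially circular in this context: the natural way to show that a flat module has ``rank at most one'' is precisely to show that its second exterior power vanishes, which is the very thing to be proved. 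So as written the key vanishing is asserted rather than proved; the argument is repaired simply by quoting the injectivity statement of the lemma correctly, after which your reduction closes the proof exactly as in the paper.
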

\begin{proof}
By flat base change, $\aaa/\aaa^2$ is a flat $A/\aaa$-module. Then by \cite[Lemma 2.1]{Pla1} the homomorphism
\[ \gamma_n: \wedge^n_{A/\aaa} \aaa/\aaa^2 \to \Tor^A_n(A/\aaa,A/\aaa) \]
is injective for all $n$. Since $\gamma_1$ is an isomorphism and $\Tor^A_n(A/\aaa,A/\aaa)=0$ for $n\geq 2$, $\gamma_n$ is an isomorphism for all $n$.
\end{proof}

For the next corollary, we consider the definition of \emph{perfectoid ring} (for a prime that we will always denote by $p$) as in \cite[Definition 3.5]{BMS} (or equivalently in \cite[Definition 3.5]{BIM}).

\begin{cor}\label{valuation.and.perfectoid}
(i) Any ideal of a valuation ring is quasi-regular. In particular, if $\locala$ is a valuation ring, then it is formally regular with the discrete topology.\\
(ii) Any radical ideal of a perfectoid ring containing $p$ is quasi-regular. In particular, any maximal ideal of a perfectoid ring is quasi-regular.
\end{cor}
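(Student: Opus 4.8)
The plan is to reduce everything to Theorem \ref{thm.flat.regular}, which asserts that any flat ideal is quasi-regular, together with the observation recorded just above it that a quasi-regular maximal ideal makes a local ring formally regular (with the discrete topology, hence also with the adic one by Proposition \ref{prop.formal.regular.topologies}). So the real content is to exhibit the relevant ideals as flat modules.

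For part (i): over a valuation ring $A$, every finitely generated ideal is principal and torsion-free, hence free (a principal ideal $aA$ with $a\neq 0$ is isomorphic to $A$ since $A$ is a domain), and an arbitrary ideal $\aaa$ is the directed union of its finitely generated subideals. A directed colimit of flat (indeed free) modules is flat, so $\aaa$ is flat. Theorem \ref{thm.flat.regular} then gives that $\aaa$ is quasi-regular. Applying this to $\aaa=\mmm$ and invoking the remark in Definition \ref{exteriormente.regular} yields that a valuation ring $\locala$ is formally regular with the discrete topology. This part should be essentially immediate.

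For part (ii) I expect the main obstacle: one must show that a radical ideal $\aaa$ of a perfectoid ring $R$ with $p\in\aaa$ is flat over $R$. The standard structural input for perfectoid rings is that $R$ is $p$-torsion-free after quotienting by its ideal of $p$-power-torsion, that $R/\sqrt{pR}$ (equivalently $R/p$ up to nilpotents, which here is $R/\aaa$-relevant data) is perfect of characteristic $p$, and that $p$ admits a compatible system of $p$-power roots $p^{1/p^n}$ up to units in $R$; equivalently $R$ is ``almost'' a perfect ring modulo $p$. I would argue that since $\aaa$ is radical and contains $p$, it contains $p^{1/p^n}$ for all $n$, so $\aaa$ is a nonunital perfect $\mathbb{F}_p$-"ring" in a suitable sense, and that over such a ring a radical ideal is flat — concretely, $\aaa = \bigcup \aaa^{[1/p^n]}$-type colimit arguments reduce flatness of $\aaa$ to flatness of finitely generated radical ideals, and Frobenius-stability of radical ideals in a perfect ring makes these colimit maps the relevant flat transition maps. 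Alternatively, and perhaps more cleanly, one cites the fact (from Bhatt–Morrow–Scholze or Bhatt–Iyengar–Ma) that $R/\aaa$ has Tor-dimension reasons forcing $\Tor_1^R(R/\aaa,-)=0$, i.e.\ $\aaa$ flat.

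Once flatness of $\aaa$ is in hand, Theorem \ref{thm.flat.regular} immediately gives quasi-regularity, and since a maximal ideal of a perfectoid ring is radical and (by the perfectoid axioms) contains $p$, the final sentence of part (ii) follows as a special case, with the local-ring consequence again coming from the remark in Definition \ref{exteriormente.regular}. The delicate point to get right in the write-up is precisely the passage ``radical ideal containing $p$ in a perfectoid ring $\Rightarrow$ flat''; everything else is formal.
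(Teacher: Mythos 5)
Part (i) of your proposal is fine and is essentially the paper's argument: every ideal of a valuation ring is flat (you reprove the Bourbaki fact via directed unions of principal ideals), so Theorem \ref{thm.flat.regular} applies, and the remark in Definition \ref{exteriormente.regular} gives formal regularity with the discrete topology.

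Part (ii) has a genuine gap, and it sits exactly where you flagged it. Your route requires that \emph{every} radical ideal $\aaa \supseteq (p)$ of a perfectoid ring be flat as a module, and you give no proof of this: the ``nonunital perfect ring'' and colimit remarks are not an argument, and your fallback is actually incorrect --- the condition $\Tor_1^R(R/\aaa,-)=0$ is flatness of $R/\aaa$, not of $\aaa$ (flatness of $\aaa$ is $\Tor_n^R(R/\aaa,-)=0$ for $n\geq 2$), and $R/\aaa$ is certainly not flat for, say, a maximal ideal of a perfectoid domain that is not a field. What Bhatt--Iyengar--Ma actually provide (their Lemma 3.7) is flatness of the single ideal $\mathrm{rad}(p)$, together with perfectness of $A/\mathrm{rad}(p)$; there is no statement there about arbitrary radical ideals containing $p$ being flat, and I see no reason to expect it (e.g.\ for $\mmm=\sqrt{(p,x,y)}$ in $\mathcal{O}_C\langle x^{1/p^\infty},y^{1/p^\infty}\rangle$ flatness of $\mmm$ is far from clear). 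The paper sidesteps the issue entirely: it factors $A\to A/\mathrm{rad}(p)\to A/\mmm$, uses Theorem \ref{thm.flat.regular} only for the flat ideal $\mathrm{rad}(p)$ to kill $\HH_n(A,A/\mathrm{rad}(p),M)$ for $n\geq 2$, observes that $A/\mathrm{rad}(p)\to A/\mmm$ is a homomorphism of perfect rings (a reduced quotient of a perfect ring is perfect) so its Andr\'e--Quillen homology vanishes in all degrees because Frobenius induces zero on the cotangent complex, and then splices these with the Jacobi--Zariski exact sequence to get $\HH_n(A,A/\mmm,M)=0$ for $n\geq 2$. You should replace your flatness claim for $\aaa$ by this two-step d\'evissage (or supply an actual proof of that flatness, which would be a stronger statement than the corollary itself). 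The final ``in particular'' for maximal ideals ($p\in\mmm$ by $p$-adic completeness) is fine as you state it.
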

\begin{proof}
Any ideal of a valuation ring is flat (\cite[VI, \S3, n. 6, Lemma 1]{BoAC57}) so (i) follows from Theorem \ref{thm.flat.regular}. Now let $A$ be a perfectoid ring. By \cite[Lemma 3.7]{BIM}, $A/\mathrm{rad}(p)$ is perfect and $\mathrm{rad}(p)$ is flat over $A$. Let $\mmm$ be a radical ideal of $A$ containing $p$, and $B=A/\mmm$. We have a Jacobi-Zariski exact sequence
\[ ... \to \HH_3(A/\mathrm{rad}(p),B,M) \to \HH_2(A,A/\mathrm{rad}(p),M) \]
\[ \to \HH_2(A,B,M) \to \HH_2(A/\mathrm{rad}(p),B,M) \]
for any $B$-module $M$. Since $A/\mathrm{rad}(p)$ and $B$ are perfect, then
\[ \HH_n(A/\mathrm{rad}(p),B,M)=0 \]
for all $n\geq 0$ (since the Frobenius isomorphisms induce the zero map on the cotangent complex \cite[Lemme 53]{An1988}). By Theorem \ref{thm.flat.regular}
\[ \HH_n(A,A/\mathrm{rad}(p),M)=0 \]
for all $n\geq 2$. Therefore $\HH_n(A,B,M)=0$ for all $n\geq 2$.

For the particular case, note that if $\mmm$ is a maximal ideal of $A$, $p\in \mmm$ since $A$ is $p$-adically separated and complete (\cite[III, \S2, n. 13, Lemme 3(ii)]{BoAC14}).
\end{proof}

\begin{cor}\label{cor.extension.valuation}
Let $A\to B$ be a flat ring homomorphism and assume that any ideal of $B$ is flat (for instance if $B$ is a Prüfer domain, or more particularly a valuation ring). Then
\[ \HH_n(A,B,k(\qqq))=0 \]
for and prime ideal $\qqq$ of $B$ and all $n\geq 2$.
\end{cor}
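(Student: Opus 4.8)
The plan is to reduce to Corollary \ref{valuation.and.perfectoid} (i) by localizing and base-changing, exactly as in the perfectoid case of Corollary \ref{valuation.and.perfectoid} (ii), but more directly since now every ideal of $B$ is flat. Fix a prime $\qqq$ of $B$ and set $\ppp=\qqq\cap A$, $k(\qqq)=B_\qqq/\qqq B_\qqq$. First I would replace $A\to B$ by $A_\ppp\to B_\qqq$: Andr\'e--Quillen homology localizes \cite[5.27]{An1974}, so $\HH_n(A,B,k(\qqq))=\HH_n(A_\ppp,B_\qqq,k(\qqq))$, and $A_\ppp\to B_\qqq$ is still flat with every ideal of $B_\qqq$ flat (localization of a flat module is flat, and an ideal of $B_\qqq$ is the extension of an ideal of $B$). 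So we may assume $(A,\ppp)$ and $(B,\qqq)$ are local with $A\to B$ a flat local homomorphism and $k(\qqq)=B/\qqq=:l$.

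Now $\qqq$ is a flat ideal of $B$, so by Theorem \ref{thm.flat.regular} it is quasi-regular, hence $\HH_n(B,l,M)=0$ for all $n\geq 2$ and all $l$-modules $M$ (Definition \ref{exteriormente.regular}). Next I would use flat base change for the cotangent complex: since $A\to B$ is flat, $\mathbb{L}_{B|A}\otimes_B l$ computes $\HH_*(A,B,l)$, and the Jacobi--Zariski exact sequence for $A\to B\to l$ reads
\[ \HH_{n+1}(B,l,M)\to \HH_n(A,B,M)\to \HH_n(A,l,M)\to \HH_n(B,l,M). \]
For $n\geq 2$ the two outer terms vanish by quasi-regularity of $\qqq$, giving $\HH_n(A,B,M)\cong \HH_n(A,l,M)$ for all $n\geq 2$; so it suffices to show $\HH_n(A,l,M)=0$ for $n\geq 2$, i.e. that $\ppp$ is quasi-regular in $A$. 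But $\ppp=\qqq\cap A$, and since $A\to B$ is faithfully flat (a flat local homomorphism), $\ppp B\subseteq\qqq$ and the map $\ppp/\ppp^2\to \qqq/\qqq^2$ becomes, after tensoring with $B$, the inclusion of a direct summand; more to the point, $\ppp=A\cap\qqq B$... \emph{[here the argument needs care]}.

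\textbf{Main obstacle.} The delicate point is passing regularity of $\qqq$ down to $\ppp$: unlike in the perfectoid case, where both $A/\mathrm{rad}(p)$ and $B$ were perfect and the Frobenius killed the cotangent complex, here I only know $\qqq$ is flat, and $\ppp$ need not be flat over $A$. The right approach is to compute directly: $\HH_n(A,l,M)=\HH_n(A,A/\ppp,l\otimes M)$ and, by Jacobi--Zariski for $A\to A/\ppp\to l$ together with $\HH_*(A/\ppp,l,-)$ coming from the (regular, as $B/\qqq=l$ and $A/\ppp\hookrightarrow l$... ) extension, reduce to $\HH_n(A,A/\ppp,N)$ for $A/\ppp$-modules $N$. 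Then flat base change along $A\to B$ gives $\HH_n(A,A/\ppp,N)\otim_{A/\ppp}(B/\ppp B)=\HH_n(B,B/\ppp B,N\otimes B)$, and since $B/\ppp B$ is a localization issue away from $B/\qqq$... The cleanest route is instead: apply the Jacobi--Zariski sequence for $A\to B\to l$ as above to conclude $\HH_n(A,B,M)\cong\HH_n(A,l,M)$, then for the latter use $A\to B$ flat to get $\HH_n(A,l,l)\otimes_l(\text{faithfully flat }l\text{-vs})$...

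\emph{[I would restructure as follows to avoid the gap.]} Set $l=k(\qqq)$. By flat base change, $\HH_n(A,B,l)=\HH_n(A,B,B)\otimes_B l$ is not right either since $\HH_n(A,B,B)$ need not be $l$-flat; instead use that $\mathbb{L}_{B|A}$ is a complex of flat $B$-modules (as $A\to B$ is flat, $\mathbb{L}_{B|A}\simeq \mathbb{L}_{B|A}$ and its homology $\HH_n(A,B,B)$ vanishes for $n\geq 1$ iff $B$ is formally smooth — not assumed). The correct statement I should invoke: since every ideal of $B$ is flat, in particular $\qqq$ is quasi-regular, so $\HH_n(B,l,M)=0$, $n\geq 2$; combined with Jacobi--Zariski for $A\to B\to l$ this yields $\HH_n(A,B,M)\cong\HH_n(A,l,M)$ for $n\geq 2$. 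Finally $\HH_n(A,l,M)\cong\HH_n(A_\ppp,l,M)$ and, writing $l$ as the residue field of the DVR-or-idempotent-maximal-ideal situation is NOT available for $A$ — so one must instead apply the same corollary to the flat local map $A_\ppp\to B_\qqq$ \emph{and its residue fields}: by \cite[16.?]{An1974} base change, $\HH_n(A,l,M)$ is a direct summand of $\HH_n(A,l,M)\otimes_l(B_\qqq/\ppp B_\qqq)\hookleftarrow$ ...

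I acknowledge this last step is the crux; the intended proof surely invokes the flat base-change isomorphism $\HH_n(A,l,M)\otimes_l L\cong \HH_n(A\otimes ?, \ldots)$ to detect vanishing after the faithfully flat extension $l\to l\otimes_A B$, where the right-hand side vanishes because $\qqq$ is quasi-regular over $B$. Making that precise is the single nontrivial point.
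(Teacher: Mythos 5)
There is a genuine gap, and you correctly identified where it sits. Your first reduction is fine: since $\qqq$ (and $\qqq B_\qqq$) is a flat ideal of $B$, Theorem \ref{thm.flat.regular} makes it quasi-regular, and the Jacobi--Zariski sequence for $A\to B\to k(\qqq)$ gives $\HH_n(A,B,M)\cong\HH_n(A,k(\qqq),M)$ for $n\geq 2$. But this reformulation is not progress: $\HH_n(A,k(\qqq),M)$ is governed by the singularity of $A$ at $\ppp$, and the hypotheses say nothing about ideals of $A$ — $\ppp$ need not be flat or quasi-regular in $A$, so every attempt you make to push regularity from $\qqq$ down to $\ppp$ (the direct-summand argument on $\ppp/\ppp^2\to\qqq/\qqq^2$, or reducing to $\HH_n(A,A/\ppp,N)$) is a dead end. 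The conclusion $\HH_n(A,k(\qqq),M)=0$ is in fact \emph{equivalent} to the corollary, and proving it requires the same missing ingredient.

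The missing idea is to use flatness of $B$ over $A$ for a base change along $A\to A/\ppp$ rather than in the Jacobi--Zariski sequence for $A\to B\to k(\qqq)$: by \cite[4.54]{An1974},
\[ \HH_n(A,B,k(\qqq))=\HH_n(A/\ppp,B/\ppp B,k(\qqq)), \]
which eliminates $A$ below $\ppp$ entirely and reduces everything to the fiber. One then localizes to $\HH_n(k(\ppp),(B/\ppp B)_\qqq,k(\qqq))$ and sandwiches the local ring $(B/\ppp B)_\qqq$ between the two fields $k(\ppp)$ and $k(\qqq)$: the Jacobi--Zariski sequence for $B\to B/\ppp B\to B/\qqq$, together with quasi-regularity of the two flat ideals $\ppp B$ and $\qqq$ of $B$, gives $\HH_n((B/\ppp B)_\qqq,k(\qqq),k(\qqq))=0$ for $n\geq 3$, and $\HH_n(k(\ppp),k(\qqq),-)=0$ for $n\geq 2$ because it is a field extension \cite[7.4]{An1974}; the Jacobi--Zariski sequence for $k(\ppp)\to(B/\ppp B)_\qqq\to k(\qqq)$ then kills $\HH_n(k(\ppp),(B/\ppp B)_\qqq,k(\qqq))$ for $n\geq 2$. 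So your instinct that a "faithfully flat detection" step was needed is close, but the correct mechanism is base change to the fiber over $\ppp$, not descent of quasi-regularity to $\ppp$.
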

\begin{proof}
Let $\ppp=f^{-1}(\qqq)$. By flat base change \cite[4.54]{An1974} we have
\[\tag{1}  \HH_n(A,B,k(\qqq))=\HH_n(A/\ppp,B/\ppp B,k(\qqq)). \]
By the Jacobi-Zariski exact sequence associated to $B\to B/\ppp B \to B/\qqq$ and Theorem \ref{thm.flat.regular} we have
\[ \HH_n(B/\ppp B,B/\qqq ,k(\qqq))=0  \]
for all $n\geq 3$, and then by \cite[5.27]{An1974}
\[\tag{2} \HH_n((B/\ppp B)_{\qqq},k(\qqq) ,k(\qqq))=0  \]
for all $n\geq 3$.
Now, from (2), \cite[7.4]{An1974} and the Jacobi-Zariski exact sequence associated to $k(\ppp) \to (B/\ppp B)_{\qqq} \to k(\qqq)$ we obtain
\[ \HH_n(k(\ppp),(B/\ppp B)_{\qqq},k(\qqq))=0 \]
for all $n\geq 2$
and therefore
\[ \HH_n(A/\ppp,B/\ppp B,k(\qqq))=0 \]
for all $n\geq 2$. Now (1) gives the result.
\end{proof}

\begin{rem}
If $A \to B$ is an extension of valuation rings we can say more. In \cite[Theorem 6.5.12(i)]{GR} it is proved that $\HH_n(A,B,B)=0$ for all $n\geq 2$ and $\HH_1(A,B,B)$ is a flat $B$-module, which is equivalent to $\HH_n(A,B,M)=0$ for all $n\geq 2$ and any $B$-module $M$ (it suffices to apply the universal coefficient spectral sequence
\[ E^2_{pq}=\Tor^B_p(\HH_q(A,B,B),M)\Rightarrow \HH_{p+q}(A,B,M) \]
keeping in mind that $\Tor^B_n(-,-)=0$ if $n\geq 2$).
\end{rem}

Now consider the case of a homomorphism of perfectoid rings $A\to B$. In \cite[Lemma 3.14]{BMS} it is proved that $\HH_n(\mathbb{L}_{B|A}\overset{\rm L}\otimes_\mathbb{Z}\mathbb{F}_p)=0$ for all $n$. We also have:

\begin{cor}\label{perfectoid}
Let $f:A \to B$ a homomorphism of perfectoid rings. Then
\[ \HH_n(A,B,B/\qqq)=0 \]
for any radical ideal $\qqq$ of $B$ containing $p$ and all $n\geq 2$.
\end{cor}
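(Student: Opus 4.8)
The plan is to imitate the proof of Corollary \ref{valuation.and.perfectoid}(ii), but now exploiting that \emph{both} $A$ and $B$ are perfectoid, so that the ideal $\mathrm{rad}(p)$ is available on both sides. Let $\qqq$ be a radical ideal of $B$ with $p\in\qqq$, and set $\ppp=\mathrm{rad}(pA)$, $C=A/\ppp$ and $D=B/\mathrm{rad}(pB)$. By \cite[Lemma 3.7]{BIM} the rings $C$ and $D$ are perfect of characteristic $p$, and $\mathrm{rad}(pA)$ (resp. $\mathrm{rad}(pB)$) is flat over $A$ (resp. over $B$). Since $f(p)=p\in\mathrm{rad}(pB)$, the homomorphism $f$ induces a ring map $C\to D$, and since $p\in\qqq$ we have $\mathrm{rad}(pB)\subset\qqq$, so $B/\qqq$ is a $D$-module; write $M=B/\qqq$ throughout.

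First I would kill the "perfect part" of both factors. For the map $C\to D$ of perfect $\mathbb{F}_p$-algebras, the Frobenius is an isomorphism on each, hence acts as zero on $\mathbb{L}_{D|C}$ by \cite[Lemme 53]{An1988}, so $\HH_n(C,D,M)=0$ for all $n$. For the map $B\to D$, Theorem \ref{thm.flat.regular} applies to the flat ideal $\mathrm{rad}(pB)$, giving $\HH_n(B,D,M)=0$ for all $n\geq 2$. Similarly $\HH_n(A,C,M)=0$ for all $n\geq 2$ by Theorem \ref{thm.flat.regular} applied to the flat ideal $\mathrm{rad}(pA)$ (base-changing the $A/\ppp$-module structure, i.e. using that $M$ is a $C$-module via $C\to D$). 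Now the Jacobi–Zariski sequence for $A\to B\to D$,
\[
\HH_n(A,B,M)\to\HH_n(A,D,M)\to\HH_n(B,D,M),
\]
together with $\HH_n(B,D,M)=0$ and $\HH_{n+1}(B,D,M)=0$ for $n\geq 2$, shows $\HH_n(A,B,M)\cong\HH_n(A,D,M)$ for all $n\geq 2$. On the other hand the Jacobi–Zariski sequence for $A\to C\to D$,
\[
\HH_n(A,C,M)\to\HH_n(A,D,M)\to\HH_n(C,D,M),
\]
with $\HH_n(A,C,M)=0$ for $n\geq 2$ and $\HH_n(C,D,M)=0$ for all $n$, shows $\HH_n(A,D,M)=0$ for all $n\geq 2$. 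Combining the two isomorphisms gives $\HH_n(A,B,M)=0$ for all $n\geq 2$, as desired.

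The one point that needs care — and which I expect to be the main obstacle — is the bookkeeping of module structures and flatness in the two invocations of Theorem \ref{thm.flat.regular}: one must check that $\mathrm{rad}(pA)$ and $\mathrm{rad}(pB)$ really are flat ideals (this is exactly \cite[Lemma 3.7]{BIM}, cf. the proof of Corollary \ref{valuation.and.perfectoid}), that $M=B/\qqq$ is legitimately a module over each of $C=A/\mathrm{rad}(pA)$ and $D=B/\mathrm{rad}(pB)$ (for $C$ this uses $p\in\qqq\Rightarrow\mathrm{rad}(pA)$ maps into $\qqq$, which holds since $f(\mathrm{rad}(pA))\subset\mathrm{rad}(pB)\subset\qqq$), and that the quasi-regularity conclusion of Theorem \ref{thm.flat.regular}, stated there as vanishing of $\HH_n(A,A/\aaa,M)$ for all $A/\aaa$-modules $M$, does apply with this particular $M$. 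Once these compatibilities are in place, the three Jacobi–Zariski sequences assemble exactly as above; a brief remark can record that the case of a general (not necessarily radical-containing-$p$) $\qqq$ reduces to this one only when $p\in\qqq$, which is why the hypothesis is stated that way.
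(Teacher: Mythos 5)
Your argument is correct. It is very close in spirit to the paper's proof, but the dévissage is genuinely different: the paper factors through $B/\qqq$ and $A/f^{-1}(\qqq)$ (first reducing $\HH_n(A,B,B/\qqq)$ to $\HH_n(A,B/\qqq,B/\qqq)$ via quasi-regularity of $\qqq$, then to $\HH_n(A,A/f^{-1}(\qqq),B/\qqq)$ via perfectness of $A/f^{-1}(\qqq)\to B/\qqq$, and finally invokes Corollary \ref{valuation.and.perfectoid}(ii) for the radical ideal $f^{-1}(\qqq)$ of $A$), whereas you never form $f^{-1}(\qqq)$ and instead sandwich everything between $A/\mathrm{rad}(pA)$ and $B/\mathrm{rad}(pB)$, keeping $B/\qqq$ only as a coefficient module. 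What your route buys is independence from Corollary \ref{valuation.and.perfectoid}(ii): you only need \cite[Lemma 3.7]{BIM}, Theorem \ref{thm.flat.regular}, and the Frobenius vanishing \cite[Lemme 53]{An1988}, i.e.\ you re-run the ingredients of that corollary directly rather than citing it twice; the paper's route is shorter on the page precisely because that corollary is already in hand, and it also makes visible the intermediate identification $\HH_n(A,B,B/\qqq)\cong\HH_n(A,A/f^{-1}(\qqq),B/\qqq)$. All the module-structure and flatness compatibilities you flag do hold: $f(\mathrm{rad}(pA))\subset\mathrm{rad}(pB)\subset\qqq$, so $B/\qqq$ is a module over both quotients, and Theorem \ref{thm.flat.regular} applies verbatim to the flat ideals $\mathrm{rad}(pA)$ and $\mathrm{rad}(pB)$.
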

\begin{proof}
From the Jacobi-Zariski exact sequence associated to $A \to B \to B/\qqq$ and Corollary \ref{valuation.and.perfectoid}(ii) we obtain
\[ \HH_n(A,B,B/\qqq)=\HH_n(A,B/\qqq,B/\qqq) \]
for all $n\geq 2$. Let $\ppp=f^{-1}(\qqq)$. Since $A/\ppp$ and $B/\qqq$ are perfect, $\HH_n(A/\ppp,B/\qqq,B/\qqq)=0$ for all $n$, and then from the Jacobi-Zariski exact sequence associated to $A \to A/\ppp \to B/\qqq$ we deduce
\[ \HH_n(A,B/\qqq,B/\qqq) = \HH_n(A,A/\ppp,B/\qqq) \]
for all $n$. But $\HH_n(A,A/\ppp,B/\qqq)=0$ for $n\geq 2$ by Corollary \ref{valuation.and.perfectoid}(ii).
\end{proof}

In the proof of Corollary \ref{valuation.and.perfectoid} we have used that if $f:A \to B$ is a homomorphism of perfect rings of characteristic $p$ and $M$ a $B$-module, then $\HH_n(A,B,M)=0$ for all $n$. More generally, Gabber and Ramero \cite[6.5.13(i)]{GR}, generalizing \cite[Exposé XV, \S1, Proposition 2 (c) (2)]{SGA5}, show that if $f:A \to B$ is a homomorphism of rings of characteristic $p$ such that $\Tor^A_n({^\phi A},B)=0$ for all $n>0$ and the relative Frobenius homomorphism ${^\phi A}\otimes_A B \to {^\phi B}$ is an isomorphism (${^\phi A}$ is the ring $A$ considered as $A$-module via the Frobenius homomorphism $\phi$, and similarly ${^\phi B}$), then $\HH_n(A,B,M)=0$ for all $n$ (see also \cite[Proposition 5.13]{SchPS}). We can give a more precise version of their result as follows:

\begin{prop}\label{precision}
Let $f:A \to B$ be a homomorphism of rings of characteristic $p$ and $n$ an integer. Assume that $\HH_n({^\phi A}\otimes_A B,{^\phi B},M)=0$ for any ${^\phi B}$-module $M$.\\
(i) If $n\in \{0,1\}$ then $\HH_n(A,B,M)=0$ for any $B$-module $M$.\\
(ii) If $n\geq 2$ and $\Tor^A_i({^\phi A},B)=0$ for all $i=1,...,n$, then $\HH_n(A,B,M)=0$ for any $B$-module $M$.\\
(iii) If $n=1$, then for any prime ideal $\qqq$ of $B$, $\HH_0(A,B,k(\qqq))=0$ where $k(\qqq)$ is the residue field of $B$ at $\qqq$.\\
(iv) If $n\geq 2$ and $\Tor^A_i({^\phi A},B)=0$ for all $i=1,...,n-1$, then for any prime ideal $\qqq$ of $B$, $\HH_{n-1}(A,B,k(\qqq))=0$.
\end{prop}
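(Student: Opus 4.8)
The idea is to base-change the cotangent complex along Frobenius and exploit the hypothesis on the relative Frobenius algebra ${^\phi A}\otimes_A B$. Recall the standard fact (used already in the proof of Corollary \ref{valuation.and.perfectoid}) that the Frobenius homomorphism induces the zero map on cotangent complexes; concretely, the transitivity triangle for $A\to B\xrightarrow{\phi_B} {^\phi B}$ together with the fact that $\phi_B$ kills $\mathbb{L}$ forces $\mathbb{L}_{{^\phi B}|A}\simeq \mathbb{L}_{B|A}\otimes_B {^\phi B}$ to be (in the derived sense) identified with $\mathbb{L}_{B|A}$ shifted by the Frobenius twist, and similarly for $A$. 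The first step is therefore to write down the base-change/transitivity comparison: from $A\to {^\phi A}\to {^\phi A}\otimes_A B$ and the flat (in the relevant degrees) base change hypothesis $\Tor^A_i({^\phi A},B)=0$, one gets $\mathbb{L}_{{^\phi A}\otimes_A B|{^\phi A}}\simeq \mathbb{L}_{B|A}\otimes_A {^\phi A}$ in degrees $\le n$. Combining with the transitivity triangle for ${^\phi A}\to {^\phi A}\otimes_A B\to {^\phi B}$ and using that ${^\phi A}\to {^\phi B}$ is just $\phi$ applied to $f$ (so its cotangent complex is $\mathbb{L}_{B|A}$ Frobenius-twisted), one identifies $\HH_n({^\phi A}\otimes_A B,{^\phi B},M)$ with $\HH_n(A,B,M)$ up to the Frobenius twist and the correction terms coming from the $\Tor$'s. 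When those $\Tor$'s vanish in the appropriate range, the hypothesis $\HH_n({^\phi A}\otimes_A B,{^\phi B},M)=0$ transfers directly to $\HH_n(A,B,M)=0$, giving (i) (where $n\in\{0,1\}$ and no $\Tor$ hypothesis is needed, since $\HH_0$ and $\HH_1$ only see the truncation of $\mathbb{L}$ in degrees $\le 1$, which base change always controls) and (ii).

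For (i), the point is that $\HH_0(A,B,M)=\Omega_{B|A}\otimes_B M$ and $\HH_1(A,B,M)$ depend only on $\tau_{\le 1}\mathbb{L}_{B|A}$, and the Jacobi--Zariski sequences for $A\to {^\phi A}\to {^\phi A}\otimes_A B$ and ${^\phi A}\to {^\phi A}\otimes_A B\to {^\phi B}$ already give surjections/isomorphisms in these low degrees without any Tor-vanishing — one uses $\HH_i(A,{^\phi A},-)=0$ and $\HH_i({^\phi A},{^\phi B},-)\cong \HH_i(A,B,-)$ (Frobenius twist) for $i\le 1$. For (ii), the same Jacobi--Zariski sequences, now with the hypothesis $\Tor^A_i({^\phi A},B)=0$ for $1\le i\le n$ guaranteeing $\HH_i(A/\text{nothing})$-type vanishing, i.e. that $\mathbb{L}_{{^\phi A}\otimes_A B|{^\phi A}}$ agrees with $\mathbb{L}_{B|A}\otimes_A{^\phi A}$ through degree $n$, let one run the comparison up to degree $n$.

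For (iii) and (iv), the plan is to reduce to the field $k(\qqq)$ by the localization of André--Quillen homology (\cite[5.27]{An1974}) and a Jacobi--Zariski sequence for $A\to B\to k(\qqq)$, or better, apply the already-proved (i)/(ii) with $B$ replaced by suitable localizations/quotients. Concretely, for (iv): localizing at $\qqq$ and using \cite[5.27]{An1974}, the hypothesis $\Tor^A_i({^\phi A},B)=0$ for $i\le n-1$ still gives the degree-$(\le n-1)$ identification after base change; then the Jacobi--Zariski sequence for $A\to B_\qqq\to k(\qqq)$ combined with $\HH_{n-1}(A,B,k(\qqq))=0$ (which follows from (ii) applied in degree $n-1$, whose $\Tor$-hypothesis is exactly the given one) and $\HH_{n-1}(B_\qqq,k(\qqq),k(\qqq))$ sitting in the right spot yields the claim; (iii) is the degree-$1$ case handled analogously using (i). The main obstacle I expect is bookkeeping the Frobenius twist carefully — making sure that "the Frobenius induces zero on $\mathbb{L}$" is applied in the form that actually identifies $\mathbb{L}_{{^\phi A}\otimes_A B|{^\phi A}}$ with the twist of $\mathbb{L}_{B|A}$ rather than with something killed — and tracking precisely which $\Tor$-vanishing is needed in which degree so that the truncation $\tau_{\le n}$ of the base-changed cotangent complex is genuinely computed by $\mathbb{L}_{B|A}\otimes^{\mathrm L}_A{^\phi A}$ through degree $n$; once that comparison is pinned down, (i)--(iv) are immediate consequences of the long exact Jacobi--Zariski sequences together with $\HH_*$ of maps of perfect rings vanishing.
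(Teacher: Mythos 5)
Your overall architecture is the right one and is essentially the paper's: the Jacobi--Zariski sequence for ${^\phi A}\to{^\phi A}\otimes_AB\to{^\phi B}$, the base-change identification of $\HH_i({^\phi A},{^\phi A}\otimes_AB,M)$ with $\HH_i(A,B,M)$ under the Tor hypotheses, and the fact that Frobenius kills Andr\'e--Quillen classes. But two of the specific claims you rely on are wrong, and the step you yourself flag as ``the main obstacle'' is exactly the step carrying all the content. First, $\HH_i(A,{^\phi A},-)=0$ is false: $\mathbb{L}_{{^\phi A}|A}$ is the cotangent complex of the Frobenius endomorphism, and already $\HH_0(A,{^\phi A},{^\phi A})=\Omega_{{^\phi A}|A}\neq 0$ for $A=\mathbb{F}_p[x]$ (likewise the claimed identification $\mathbb{L}_{{^\phi B}|A}\simeq\mathbb{L}_{B|A}\otimes_B{^\phi B}$ ignores the nonzero third vertex $\mathbb{L}_{{^\phi B}|B}$ of the transitivity triangle). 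What you actually need in low degrees is not this vanishing but the unconditional fact that the base-change map $\sigma_i:\HH_i(A,B,M)\to\HH_i({^\phi A},{^\phi A}\otimes_AB,M)$ is an isomorphism for $i=0$ and an epimorphism for $i=1$ \cite[2.6.2]{MR}, and an isomorphism through degree $n$ when $\Tor^A_i({^\phi A},B)=0$ for $i=1,\dots,n$ \cite[4.54]{An1974}. Second, the heart of the matter is why the map $\alpha_n:\HH_n({^\phi A},{^\phi A}\otimes_AB,M)\to\HH_n({^\phi A},{^\phi B},M)$ in that Jacobi--Zariski sequence vanishes; your text never establishes this, and without it the hypothesis $\HH_n({^\phi A}\otimes_AB,{^\phi B},M)=0$ gives no information at all, since the two adjacent terms are both copies of $\HH_n(A,B,M)$ and could perfectly well map isomorphically onto each other. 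The missing observation is that $\alpha_n\circ\sigma_n$ is the map induced by the commutative square of absolute Frobenii, hence zero by \cite[Lemme 53]{An1988}; since $\sigma_n$ is surjective in every case considered, $\alpha_n=0$, and then exactness plus the hypothesis kills $\HH_n({^\phi A},{^\phi B},M)=\HH_n(A,B,M)$, giving (i) and (ii).

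Your plan for (iii) and (iv) also misreads what those parts are for: they do \emph{not} follow from (i)/(ii) ``applied in degree $n-1$'', since that would require the hypothesis $\HH_{n-1}({^\phi A}\otimes_AB,{^\phi B},M)=0$, which is not given. The degree-$n$ hypothesis is exploited one degree lower through the connecting homomorphism $\delta_n:\HH_n({^\phi A}\otimes_AB,{^\phi B},M)\to\HH_{n-1}({^\phi A},{^\phi A}\otimes_AB,M)$: since $\alpha_{n-1}=0$ by the same Frobenius argument, exactness forces $\HH_{n-1}({^\phi A},{^\phi A}\otimes_AB,M)=\mathrm{im}(\delta_n)=0$ for every ${^\phi B}$-module $M$, and then $\sigma_{n-1}$ (an isomorphism under the weaker Tor hypothesis of (iv)) gives $\HH_{n-1}(A,B,M)=0$ --- but only for $M$ a ${^\phi B}$-module. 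That is precisely why the conclusion is restricted to residue fields: one takes $M={^\phi k(\qqq)}$ and descends along the faithfully flat extension $k(\qqq)\to{^\phi k(\qqq)}$ via $\HH_{n-1}(A,B,k(\qqq))\otimes_{k(\qqq)}{^\phi k(\qqq)}=\HH_{n-1}(A,B,{^\phi k(\qqq)})=0$. No detour through localization and the sequence for $A\to B_\qqq\to k(\qqq)$ is needed or helpful here.
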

\begin{proof}
Let $M$ be a ${^\phi B}$-module. We have a commutative diagram with exact upper row
\[
\begin{tikzpicture}[baseline= (a).base]
\node[scale=0.78] (a) at (0,0){
\begin{tikzcd}
\HH_{n+1}({^\phi A}\otimes_A B,{^\phi B},M)  \arrow[r, "\delta_{n+1}"] & \HH_n({^\phi A},{^\phi A}\otimes_A B,M) \arrow[r, "\alpha_n"] &  \HH_n({^\phi A},{^\phi B},M) \arrow[r, "\beta_n"] & \HH_n({^\phi A}\otimes_A B,{^\phi B},M) \\
& \HH_n(A,B,M) \arrow[u, "\sigma_n"] \arrow[ru, "\Phi_n"] &  &
\end{tikzcd}
};
\end{tikzpicture}
\]
The homomorphism $\Phi_n$ is zero for all $n$ since it is induced by Frobenius. The homomorphism $\sigma_n$ is an isomorphism for $n=0$, an epimorphism for $n=1$ \cite[2.6.2]{MR}, and an isomorphism for $n\geq 2$ whenever $\Tor^A_i({^\phi A},B)=0$ for all $i=1,...,n$ \cite[4.54]{An1974}. Therefore $\alpha_n=0$ in each of these cases and so $\HH_n({^\phi A}\otimes_A B,{^\phi B},M)=0$ for any ${^\phi B}$-module $M$ implies $\HH_n({^\phi A},{^\phi B},M)=0$ for any ${^\phi B}$-module $M$, that is $\HH_n(A,B,M)=0$ for any $B$-module $M$.

By the same diagram, $\HH_{n+1}({^\phi A}\otimes_A B,{^\phi B},M)=0$ for any $B$-module $M$ implies $\HH_n({^\phi A},{^\phi A}\otimes_A B,M)=0$ and then $\HH_n(A,B,M)=0$ for any ${^\phi B}$-module $M$ if $n=0$ or $\Tor^A_i({^\phi A},B)=0$ for all $i=1,...,n$. In particular, $\HH_n(A,B,k(\qqq))\otimes_{k(\qqq)}{^\phi k(\qqq)}=\HH_n(A,B,{^\phi k(\qqq)})=0$ and then $\HH_n(A,B,k(\qqq))=0$.
\end{proof}

\begin{rem}
In relation with Corollary \ref{cor.extension.valuation} and (iii) and (iv) of Proposition \ref{precision}, note the following result by M. Andr\'e when $B$ is noetherian \cite[Supplément, Proposition 29]{An1974}: if $n$ is an integer such that $\HH_i(A,B,k(\qqq))=0$ for any prime ideal $\qqq$ of $B$ and any $i\geq n$, then $\HH_i(A,B,M)=0$ for any $B$-module $M$ and any $i\geq n$.

\end{rem}

One technical advantage of the $\aaa$-adic topology for an ideal $\aaa$ is the following rigidity result. We will see in Example \ref{example.non.rigid} that it does not hold with the discrete topology.

\begin{prop}\label{prop.f.r.rigid}
Let $A$ be a ring, $\aaa$ and ideal of $A$. If $\aaa$ is a formally regular ideal with the $\aaa$-adic topology then
\[ \colimn \HH^t(A/\aaa^n,A/\aaa,M) = 0 \]
for all $t\geq 2$ and all $A/\aaa$-modules $M$.
\end{prop}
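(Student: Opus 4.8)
The plan is to dualise the problem to a homological one and then run a universal coefficient spectral sequence, taking the colimit over $n$ throughout.

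First I would isolate the two inputs. Since $\aaa$ is formally regular with the $\aaa$-adic topology, Proposition \ref{prop.regular.projective} gives that $\aaa/\aaa^2$ is projective over $A/\aaa$, and Remark \ref{remark.symmetrically} gives the isomorphism $\mathrm{S}_{A/\aaa}(\aaa/\aaa^2)\xrightarrow{\sim}\mathrm{Gr}_\aaa(A)$. The second input is that the pro-$(A/\aaa)$-module $\{\HH_q(A/\aaa^n,A/\aaa,A/\aaa)\}_{n>0}$ vanishes for every $q\geq 2$: for $q=2$ this is exactly the homological characterisation quoted in Remark \ref{provanishing}, and the cases $q\geq 3$ follow from that together with the symmetric-algebra condition on $\mathrm{Gr}_\aaa(A)$, being the pro-version of the Quillen--André characterisation of (quasi-)regular ideals contained in \cite[Proposition A]{An1974a} (see also \cite[\S12]{An1974}).

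Next, for each $n$ I would use the universal coefficient spectral sequence associated to the complex of free $A/\aaa$-modules $\mathbb{L}_{(A/\aaa)|(A/\aaa^n)}$ and an injective resolution of $M$, namely
\[ E^{p,q}_2(n)=\mathrm{Ext}^p_{A/\aaa}\!\bigl(\HH_q(A/\aaa^n,A/\aaa,A/\aaa),M\bigr)\ \Longrightarrow\ \HH^{p+q}(A/\aaa^n,A/\aaa,M), \]
which converges because the cotangent complex is bounded below and consists of projectives. These spectral sequences are natural in $n$, so passing to the (exact) filtered colimit over $n$ produces a spectral sequence $\colimn E^{p,q}_2(n)\Rightarrow\colimn\HH^{p+q}(A/\aaa^n,A/\aaa,M)$, and it remains to compute the limit of the $E_2$-page. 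The row $q=0$ vanishes since $\HH_0(A/\aaa^n,A/\aaa,A/\aaa)=\Omega_{(A/\aaa)|(A/\aaa^n)}=0$. In the row $q=1$ one has $\HH_1(A/\aaa^n,A/\aaa,A/\aaa)=\aaa/(\aaa^2+\aaa^n)=\aaa/\aaa^2$ for $n\geq 2$, with identity transition maps, so $\colimn E^{p,1}_2=\mathrm{Ext}^p_{A/\aaa}(\aaa/\aaa^2,M)$, which is $0$ for $p\geq 1$ by projectivity of $\aaa/\aaa^2$ and equals $\HHom_{A/\aaa}(\aaa/\aaa^2,M)$ for $p=0$. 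For $q\geq 2$, the contravariant functor $\mathrm{Ext}^p_{A/\aaa}(-,M)$ turns the pro-zero tower $\{\HH_q(A/\aaa^n,A/\aaa,A/\aaa)\}_n$ into a directed system of abelian groups in which every element is eventually killed, so $\colimn E^{p,q}_2=0$. Hence the colimit spectral sequence has a single nonzero entry, $\HHom_{A/\aaa}(\aaa/\aaa^2,M)$ in total degree $1$, and therefore $\colimn\HH^t(A/\aaa^n,A/\aaa,M)=0$ for every $t\neq 1$, in particular for every $t\geq 2$, which is the assertion.

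The main obstacle is the second input in degrees $q\geq 3$, which the excerpt only records explicitly for $q=2$. To obtain the higher degrees one must run André's argument in all degrees, using the extended Rees algebra (or the filtration of $A/\aaa^m$ by powers of $\aaa/\aaa^m$, whose associated graded ring is the truncated symmetric algebra $\mathrm{Gr}_\aaa(A)/\mathrm{S}^{\geq m}$) to compare the cotangent complexes $\mathbb{L}_{(A/\aaa)|(A/\aaa^n)}$, as $n\to\infty$, with that of the formally smooth $(A/\aaa)$-algebra $\mathrm{Gr}_\aaa(A)$; this is precisely the content of \cite[Proposition A]{An1974a}. The remaining points — existence and convergence of the universal coefficient spectral sequence, and its compatibility with filtered colimits — are routine.
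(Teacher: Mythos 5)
Your architecture is sound and genuinely different from the paper's proof, which simply invokes \cite[12.11]{An1974} together with Proposition \ref{prop.regular.projective}. The universal coefficient spectral sequence you use does exist (the cotangent complex is a bounded-below complex of free $A/\aaa$-modules, so the relevant double complex is first-quadrant and both convergence and compatibility with the filtered colimit over $n$ are unproblematic), your computation of the rows $q=0$ and $q=1$ is correct, and the observation that an additive contravariant functor turns a pro-zero tower into a direct system with vanishing colimit is exactly right. In particular your argument completely proves the case $t=2$: the only entry in total degree $2$ not killed by $\HH_0=0$ or by projectivity of $\aaa/\aaa^2$ is $\colimn \HHom_{A/\aaa}(\HH_2(A/\aaa^n,A/\aaa,A/\aaa),M)$, and the pro-vanishing of $\{\HH_2(A/\aaa^n,A/\aaa,M)\}_{n}$ is precisely what Remark \ref{provanishing} records from \cite[Proposition A]{An1974a}.

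The gap is where you yourself locate it, and it is genuine: for $t\geq 3$ you need the pro-vanishing of $\{\HH_q(A/\aaa^n,A/\aaa,A/\aaa)\}_{n}$ for $q\geq 3$, and the reference you give for it --- \cite[Proposition A]{An1974a} --- is, as recorded in Remark \ref{provanishing}, a statement about $\HH_2$ only. The higher-degree pro-vanishing is true, but it is not a formal consequence of the $q=2$ case together with the isomorphism ${\rm S}_{A/\aaa}(\aaa/\aaa^2)\to {\rm Gr}_{\aaa}(A)$; establishing it is exactly the d\'ecalage/graded-algebra argument of \cite[\S 12]{An1974} culminating in 12.11, i.e. it carries essentially the full weight of the proposition. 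Indeed, running your own spectral sequence backwards shows that the conclusion of the proposition in degree $t$ is more or less equivalent to the pro-vanishing of $\HH_q$ for $2\leq q\leq t$, so deferring that input without a proof or an accurate citation leaves every case $t\geq 3$ unproved. To close the gap you would either have to carry out Andr\'e's induction on the degree (starting from the isomorphism $\HHom_{A/\aaa}(\aaa^n/\aaa^{n+1},M)\cong \HH^2(A/\aaa^n,A/\aaa,M)$ of Remark \ref{provanishing} and its higher-degree analogues) or simply cite \cite[12.11]{An1974} directly, which is what the paper does.
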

\begin{proof}
This is \cite[12.11]{An1974}, keeping in mind Proposition \ref{prop.regular.projective}.
\end{proof}

\begin{prop}
(i) Let $\aaa$ be an ideal of an ring $A$. If $\aaa$ is formally regular with the $\aaa$-adic (resp. discrete) topology, then $\aaa +(X)$ is a formally regular ideal of $A[X]$ with the $\aaa +(X)$-adic (resp.discrete) topology.\\
(ii) Let $(A_i,\mmm_i,k_i)$ be a direct system of local rings and local homomorphisms, $A:= \underset{i}{\varinjlim} \; A_i$ . If each $A_i$ is a local ring formally regular with the adic topology of its maximal ideal (resp. discrete topology), then so is $A$.
\end{prop}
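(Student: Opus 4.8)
The plan is to reduce both parts to the cohomological definition of formal regularity (Definition \ref{defnformallyregular}) and then exploit the behaviour of André--Quillen cohomology under polynomial extensions and filtered colimits. For part (i), I would set $B=A/\aaa$, so that $A[X]/(\aaa+(X))\cong B$, and compare the André--Quillen cohomology $\HH^2(A[X]/(\aaa+(X))^n,B,M)$ with $\HH^2(A/\aaa^n,B,M)$ for an arbitrary $B$-module $M$. In the discrete case this is cleanest: one uses the transitivity (Jacobi--Zariski) sequence for $A\to A[X]\to B$ together with the fact that $A[X]$ is a polynomial, hence (formally) smooth $A$-algebra, so $\HH^i(A,A[X],M)=0$ for $i\geq 1$, and flat base change $\HH^i(A,A[X],M)=\HH^i(A/\aaa,B,M)$-type identifications, to get $\HH^2(A[X],B,M)\cong \HH^2(A,B,M)$; then transitivity for $A[X]\to A[X]\to B$ versus $A\to A[X]\to B$ combined with $\HH_2(A,A/\aaa,-)$-vanishing from the preceding Proposition (plus projectivity of $(\aaa+(X))/(\aaa+(X))^2$, which follows from projectivity of $\aaa/\aaa^2$ since the former is $(\aaa/\aaa^2)\oplus B$) yields the claim. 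For the adic case one does the same at each finite level $n$ and passes to the colimit, using Proposition \ref{prop.f.r.rigid} to control the higher cohomology that appears in the transitivity sequences; the key point is that the filtration $\{(\aaa+(X))^n\}$ on $A[X]$ induces, modulo the relevant ideals, the filtration $\{\aaa^n\}$ on $A$ tensored up with the $X$-adic filtration, so the graded pieces match.

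For part (ii), I would again work with the cohomological criterion and use that $\aaa/\aaa^2$ projective and the vanishing $\colimn \HH^2(A_i/\mmm_i^n, k_i, M)=0$ pass to the colimit $A=\varinjlim A_i$. The main tool is that André--Quillen homology and cohomology commute with filtered colimits of rings \cite[Supplément]{An1974}: one has $\mathbb{L}_{A/\mmm \,|\, A}\simeq \varinjlim_i \mathbb{L}_{A_i/\mmm_i\,|\,A_i}\otimes_{A_i}(A/\mmm)$ up to the appropriate base change, and since the transition maps are local, $\mmm=\varinjlim\mmm_i$ and $k=\varinjlim k_i$, $A/\mmm^n=\varinjlim_i A_i/\mmm_i^n$ for each fixed $n$. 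Thus for any $A/\mmm=k$-module $M$,
\[
\colimn \HH^2(A/\mmm^n,k,M)=\colimn \varinjlim_i \HH^2(A_i/\mmm_i^n,k_i,M\text{ as }k_i\text{-module}),
\]
where I should be a little careful about the direction of the colimits and the fact that cohomology need not commute with filtered colimits in general — here it does because $\mathbb{L}$ is computed by a simplicial resolution built from (co)limits of free modules and the coefficient module $M$ is fixed, or alternatively because for a fixed $n$ we are dealing with finitely presented data over each $A_i$. Interchanging the two filtered colimits and invoking the hypothesis that each inner term vanishes in the colimit over $n$ gives $\colimn\HH^2(A/\mmm^n,k,M)=0$. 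Projectivity of $\mmm/\mmm^2=\varinjlim(\mmm_i/\mmm_i^2)\otimes_{k_i}k$ over $k$ is automatic since every module over the field $k$ is projective, so in the discrete case the criterion of the preceding Proposition applies directly; in the adic case one additionally uses Proposition \ref{prop.f.r.rigid} at each stage, or simply that the stated colimit-vanishing is exactly the definition.

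The step I expect to be the main obstacle is the interchange of colimits (and the compatibility of the two filtrations) in part (ii) — specifically, justifying that $\HH^2$, which is a cohomology of a $\HHom$-complex and therefore a priori not colimit-friendly, nonetheless commutes with the filtered colimit over $i$ in the situation at hand. The clean way around this is to phrase everything in homology instead: by the preceding Proposition and Remark \ref{provanishing}, formal regularity with the discrete topology is equivalent to $\HH_2(A,A/\mmm,M)=0$ for all $M$ together with projectivity of $\mmm/\mmm^2$, and homology \emph{does} commute with filtered colimits of rings \cite[Supplément, Proposition 16 or 4.54]{An1974}; one then only needs $\HH_2(A_i,k_i,M)=0$ for all $k_i$-modules $M$, which is part (ii)'s hypothesis restated via that same equivalence. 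For the adic topology, one similarly translates into the pro-module vanishing of $\{\HH_2(A_i/\mmm_i^n,k_i,M)\}_n$ recorded in Remark \ref{provanishing}, which again is visibly preserved under filtered colimits since each $A_i/\mmm_i^n\to A/\mmm^n$ is a filtered colimit and $\HH_2$ commutes with it. This homological reformulation is what makes part (ii) routine rather than delicate; part (i) is then a direct diagram chase with the polynomial-extension transitivity sequences.
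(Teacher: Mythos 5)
Your overall strategy coincides with the paper's: both parts are reduced to the cohomological definition via Jacobi--Zariski sequences and the vanishing of Andr\'e--Quillen cohomology for polynomial extensions, and in (ii) one passes to homology (using the characterization of discrete formal regularity by $\HH_2(A,A/\aaa,-)=0$ plus projectivity of $\aaa/\aaa^2$) precisely so that filtered colimits can be used. The discrete case of (i) — where $\HH^i(A,A[X],M)=0$ for $i\geq 1$ gives $\HH^2(A[X],A/\aaa,M)\cong\HH^2(A,A/\aaa,M)$ — and part (ii) are essentially the paper's arguments. However, the adic case of (i), which is where the real work lies, has a genuine gap. The quantity to control is $\colimn\HH^2\bigl(A[X]/(\aaa+(X))^n,A/\aaa,M\bigr)$, and the rings $A[X]/(\aaa+(X))^n$ are not the rings $(A/\aaa^n)[X]$ to which your level-$n$ argument applies; ``the graded pieces match'' is not a statement that feeds into the Jacobi--Zariski machinery, and Proposition \ref{prop.f.r.rigid} plays no role here. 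What is actually needed is to kill, in the colimit over $n$, the term $\HH^1\bigl((A/\aaa^n)[X],A[X]/(\aaa+(X))^n,M\bigr)$ arising from the Jacobi--Zariski sequence of $(A/\aaa^n)[X]\to A[X]/(\aaa+(X))^n\to A/\aaa$; the paper does this with the sequence of $A[X]\to(A/\aaa^n)[X]\to A[X]/(\aaa+(X))^n$ together with $\colimn\HH^j\bigl(A[X],A[X]/(\aaa+(X))^n,M\bigr)=0$ for $j=0,1$ \cite[2.3.4]{MR}, applied to the \emph{fixed} ring $A[X]$ and the ideal $\aaa+(X)$. (A cofinality argument replacing $(\aaa+(X))^n$ by $\aaa^nA[X]+(X^n)$ could be made to work, but it is not automatic and you would still need the analogous $\HH^1$-vanishing for a family of rings varying with $n$.)

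A smaller point concerns the adic case of (ii): ``pro-module vanishing is visibly preserved under filtered colimits'' is not quite right as stated. If for each $i$ one only knows that for every $r$ there is some $s(i,r)$ with $\HH_2(A_i/\mmm_i^{s},k_i,M)\to\HH_2(A_i/\mmm_i^{r},k_i,M)$ zero, the index $s(i,r)$ may be unbounded in $i$, and the colimit over $i$ need not be pro-zero. The argument does go through, but only because Remark \ref{provanishing} gives the uniform statement that for a formally regular ideal the \emph{consecutive} maps vanish for every $n\geq 2$; with $s=r+1$ independent of $i$, the colimit over $i$ of zero maps is zero, and one then dualizes over the field $k$ exactly as the paper does. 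Make sure your write-up invokes this uniform version rather than the bare pro-vanishing.
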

\begin{proof}
(i) Let $\aaa_n=\aaa^n$ or $\aaa_n=0$ for all $n$. For any ring $B$, $\HH^n(B,B[X],M)=0$ for any $n\geq 1$ and any $B[X]$-module $M$ \cite[3.36]{An1974}, so from the Jacobi-Zariski exact sequence associated to $B\to B[X]\to B$ we deduce $\HH^n(B[X],B,M)=0$ for any $n\geq 2$ and any $B$-module $M$. Therefore the last term in the exact sequence
\[ 0=\colimn \HH^2(A/\aaa_n,A/\aaa,M) \to \colimn \HH^2((A/\aaa_n)[X],A/\aaa,M) \]
\[ \to \colimn \HH^2((A/\aaa_n)[X],A/\aaa_n,M)   \]
vanishes and we deduce
\[ \colimn \HH^2((A/\aaa_n)[X],A/\aaa,M)=0. \]
for any $A/\aaa$-module $M$. The extreme terms in the exact sequence
\[  \colimn \HH^0(A[X],(A/\aaa_n)[X],M) \to \colimn \HH^1((A/\aaa_n)[X],A[X]/I_n,M) \]
\[  \to \colimn \HH^1(A[X],A[X]/I_n,M)  \]
also vanish by \cite[2.3.4]{MR}, where $I_n=(\aaa + (X))^n$ if $\aaa_n=\aaa^n$, or $I_n=(0)$ if $\aaa_n=(0)$. We obtain
\[ \colimn \HH^1((A/\aaa_n)[X],A[X]/I_n,M)=0 \]
for any $A/\aaa$-module $M$.

Finally,the exact sequence
\[ 0= \colimn \HH^1((A/\aaa_n)[X],A[X]/I_n,M) \to \colimn \HH^2(A[X]/I_n,A/\aaa,M)\]
\[ \to \colimn \HH^2((A/\aaa_n)[X],A/\aaa,M)=0  \]
gives us
\[ \colimn \HH^2(A[X]/I_n,A/\aaa,M)=0 \]
as desired.\\

(ii) The ring $A$ is local, since $\mmm :=\underset{i}{\varinjlim} \; \mmm_i$ is a proper ideal of $A$ and any element of $A-\mmm$ comes from some $A_i-\mmm_i$ and so it is a unit. Let $k=A/\mmm$. If each $A_i$ is formally regular with the discrete topology, then $\HH^2(A_i,k_i,k)=0$ for each $i$. By \cite[3.21, Appendice Lemme 43]{An1974} we have
\[ \HH^2(A,k,k)=\HHom_k(\HH_2(A,k,k),k)=  \HHom_k(\underset{i}{\varinjlim} \; \HH_2(A_i,k_i,k),k)\]
\[ = \underset{i}{\varprojlim} \;  \HHom_k(\HH_2(A_i,k_i,k),k) = \underset{i}{\varprojlim} \; \HH^2(A_i,k_i,k) = 0. \]

If each $A_i$ is formally regular with the $\mmm_i$-adic topology, then for each $i$ the homomorphism
\[ \HH^2(A_i/\mmm_i^n,k_i,k) \to  \HH^2(A_i/\mmm_i^{n+1},k_i,k)  \]
vanishes for any $n\neq 2$ as we have seen in Remark \ref{provanishing}.
By \cite[3.21]{An1974}, this homomorphism can be identified to
\[ \HHom_{k_i}(\HH_2(A_i/\mmm_i^n,k_i,k),k) \to \HHom_{k_i}(\HH_2(A_i/\mmm_i^{n+1},k_i,k),k) \]
and since $k_i$ is a field that means that
\[  \HH_2(A_i/\mmm_i^{n+1},k_i,k) \to \HH_2(A_i/\mmm_i^n,k_i,k)  \]
vanishes. Taking limits and using \cite[Appendice Lemme 43]{An1974}, the homomorphism
\[  \HH_2(A/\mmm^{n+1},k,k) \to \HH_2(A/\mmm^n,k,k)  \]
is zero and then applying $\HHom_k(-,k)$ we obtain that
\[ \HH^2(A/\mmm^n,k,k) \to  \HH^2(A/\mmm^{n+1},k,k)  \]
vanishes. We deduce that $A$ is formally regular with the $\mmm$-adic topology.
\end{proof}

\hfill \break
\subsection{Formal regularity and formal smoothness}
\hfill \break

\begin{thm}\label{teorem.f.smooth.regular}
Let $f:\ringa \to \ringb$ be a continuous homomorphism. Assume for the sake of simplicity (see Remark \ref{independent}) that $f(\aaa_n) \subset \bbb_n$ for all $n$.\\
(i) Assume that $\HH^2(A/\aaa,B/\bbb,M)=0$ for any $B/\bbb$-module $M$. If $\aaa$ is formally regular with the $\AAA$ topology and $f$ is formally smooth, then $\bbb$ is formally regular with the $\BBB$ topology.\\
(ii) Assume that the induced homomorphism $\ringa \to (B/\bbb,(0), \{(0)\}_{n>0})$ is formally smooth. If $\bbb$ is formally regular with the $\BBB$ topology then $f$ is formally smooth.
\end{thm}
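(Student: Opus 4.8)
The plan is to prove both directions of Theorem~\ref{teorem.f.smooth.regular} by feeding the relevant Jacobi--Zariski exact sequences into the characterizations already established. Throughout I would use the tower of ring maps $A/\aaa_n \to B/\bbb_n \to B/\bbb$ for each $n$, and pass to the filtered colimit over $n$ at the end (colimits of exact sequences being exact). The three pieces of input are: (a) Proposition~\ref{formallysmoothideal}, which says $f$ formally smooth is equivalent to $\colimn \HH^1(A/\aaa_n, B/\bbb_n, M) = 0$ for all $B/\bbb$-modules $M$; (b) Definition~\ref{defnformallyregular}, i.e. formal regularity of $\aaa$ (resp. $\bbb$) with its topology means $\colimn \HH^2(A/\aaa_n, A/\aaa, M) = 0$ (resp. $\colimn \HH^2(B/\bbb_n, B/\bbb, M) = 0$); and (c) the hypothesis on $\HH^2(A/\aaa, B/\bbb, M)$ in part (i), or the formal smoothness of $\ringa \to (B/\bbb, (0), \{(0)\}_{n>0})$ in part (ii), the latter being equivalent by Proposition~\ref{formallysmoothideal} to $\colimn \HH^i(A/\aaa_n, B/\bbb, M) = 0$ for $i = 1$ (and one also gets a handle on $\HH^2$ via Grothendieck's criterion, see below).

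For part (i): I would write the Jacobi--Zariski sequence for $A/\aaa_n \to A/\aaa \to B/\bbb$, namely
\[
\colimn \HH^2(A/\aaa_n, A/\aaa, M) \to \colimn \HH^2(A/\aaa_n, B/\bbb, M) \to \colimn \HH^2(A/\aaa, B/\bbb, M).
\]
The left term vanishes by formal regularity of $\aaa$, the right term vanishes by the standing hypothesis of (i), hence $\colimn \HH^2(A/\aaa_n, B/\bbb, M) = 0$. Next use the Jacobi--Zariski sequence for $A/\aaa_n \to B/\bbb_n \to B/\bbb$:
\[
\colimn \HH^1(A/\aaa_n, B/\bbb_n, M) \to \colimn \HH^2(B/\bbb_n, B/\bbb, M) \to \colimn \HH^2(A/\aaa_n, B/\bbb, M).
\]
The first term vanishes because $f$ is formally smooth (Proposition~\ref{formallysmoothideal}), the last term vanishes by the previous step, so $\colimn \HH^2(B/\bbb_n, B/\bbb, M) = 0$, which is exactly formal regularity of $\bbb$ with the $\BBB$ topology.

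For part (ii): run the same Jacobi--Zariski sequence for $A/\aaa_n \to B/\bbb_n \to B/\bbb$ one notch lower, at degree~$1$:
\[
\colimn \HH^1(B/\bbb_n, B/\bbb, M) \to \colimn \HH^1(A/\aaa_n, B/\bbb, M) \to \colimn \HH^1(A/\aaa_n, B/\bbb_n, M) \to \colimn \HH^2(B/\bbb_n, B/\bbb, M).
\]
We want the third term to vanish. The second term vanishes because $\ringa \to (B/\bbb, (0), \{(0)\}_{n>0})$ is formally smooth (Proposition~\ref{formallysmoothideal} again, with $\bbb$ replaced by the zero ideal). The fourth term vanishes by formal regularity of $\bbb$. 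Hence the third term vanishes, and Proposition~\ref{formallysmoothideal} gives that $f$ is formally smooth. The one point that needs a little care is the identification of the term $\colimn \HH^1(A/\aaa_n, B/\bbb_n, M)$ with the formal-smoothness condition for $f$: this is literally the statement of Proposition~\ref{formallysmoothideal}(ii), so provided $f(\aaa_n) \subset \bbb_n$ (which we have assumed) there is nothing to check. I expect the main obstacle to be purely bookkeeping: making sure all the Andr\'e--Quillen modules in the various Jacobi--Zariski sequences are taken with respect to the right base rings and the right module $M$ (all modules here being $B/\bbb$-modules, which are in particular modules over each $A/\aaa_n$ and $B/\bbb_n$ via the structure maps), and that the colimit over $n$ commutes with everything in sight, which it does since it is a filtered colimit and $\HH^\bullet$ sends it to the colimit by \cite[3.21, Appendice Lemme 43]{An1974} or directly by exactness of filtered colimits applied to the long exact sequences. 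No deep new idea is required beyond assembling these standard exact sequences.
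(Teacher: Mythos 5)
Your proof is correct and follows essentially the same route as the paper: both parts are exactly the two Jacobi--Zariski exact sequences (for $A/\aaa_n \to A/\aaa \to B/\bbb$ and for $A/\aaa_n \to B/\bbb_n \to B/\bbb$) combined with Proposition~\ref{formallysmoothideal}, with the colimit over $n$ preserving exactness. The only blemish is that in your first display for part (i) the arrows are written in the homology rather than the cohomology order, but since both outer terms vanish the conclusion that the middle term vanishes is unaffected.
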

\begin{proof}
(i) Let $M$ be a $B/\bbb$-module. The exact sequence
\[ 0=\HH^2(A/\aaa,B/\bbb,M) \to \colimn \HH^2(A/\aaa_n,B/\bbb,M) \to \colimn \HH^2(A/\aaa_n,A/\aaa,M)=0  \]
shows that
\[ \colimn \HH^2(A/\aaa_n,B/\bbb,M)=0. \]
Now, the exact sequence (using Proposition \ref{formallysmoothideal})
\[ 0= \colimn \HH^1(A/\aaa_n,B/\bbb_n,M) \to \colimn \HH^2(B/\bbb_n,B/\bbb,M) \to \colimn \HH^2(A/\aaa_n,B/\bbb,M)=0 \]
gives
\[ \colimn \HH^2(B/\bbb_n,B/\bbb,M)=0 \]
as required.\\
(ii) It follows from the exact sequence for each $B/\bbb$-module $M$
\[ 0=\colimn \HH^1(A/\aaa_n,B/\bbb,M) \to \colimn \HH^1(A/\aaa_n,B/\bbb_n,M) \to \colimn \HH^2(B/\bbb_n,B/\bbb,M)=0. \]
\end{proof}

\begin{cor}\label{cor.1.f.smooth.regular}
(i) Let $f:\locala \to \localb$ be a local homomorphism of local rings. If $A$ is formally regular with the $\mmm$-adic topology (resp. the discrete topology) and $f$ formally smooth for the ideal $\nnn$ with the $\nnn$-adic topology (resp. the discrete topology), then $B$ is formally regular with the $\nnn$-adic topology (resp. the discrete topology).\\
(ii) Let $k$ be a field, $\localb$ a local ring, $\BBB$ a filtration on $\nnn$ and $f:k \to (B,\nnn,\BBB)$ a ring homomorphism. If $l|k$ is separable and $B$ is formally regular with the $\BBB$ topology, then $f$ is formally smooth.
\end{cor}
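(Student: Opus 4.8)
The plan is to deduce both parts from Theorem \ref{teorem.f.smooth.regular} by checking that its hypotheses are met in each of these more concrete situations; the only real work is verifying the auxiliary vanishing of an André--Quillen cohomology module involving the residue field extension $l|k$.

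For part (i), I would apply Theorem \ref{teorem.f.smooth.regular}(i) with $\aaa=\mmm$, $\bbb=\nnn$, and either the adic or the discrete filtration on both sides (note that $f(\mmm)\subset\nnn$, and $f(\mmm^n)\subset\nnn^n$, so the normalization $f(\aaa_n)\subset\bbb_n$ holds). The hypothesis ``$A$ formally regular'' is exactly ``$\mmm$ formally regular with the chosen topology'', and ``$f$ formally smooth for the ideal $\nnn$ with the chosen topology'' is the second hypothesis of \ref{teorem.f.smooth.regular}(i). The one remaining thing to check is that $\HH^2(A/\mmm,B/\nnn,M)=\HH^2(k,l,M)=0$ for every $l$-module $M$. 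This is where I expect the main (though still modest) obstacle: a priori $l|k$ is an arbitrary field extension, and $\HH^2(k,l,M)$ need not vanish for a general, e.g. non-separably-generated, extension. The fix is to observe that a local homomorphism of local rings automatically induces a field extension $k\to l$, and one must restrict attention to the case this is a \emph{separable} (equivalently, formally smooth, equivalently geometrically regular) extension — indeed, looking again, the statement as written does not impose separability of $l|k$ in part (i), so the clean route is: any field extension $l|k$ satisfies $\HH^n(k,l,M)=0$ for all $n\geq 1$ when $l|k$ is separable, and in general $\HH^2(k,l,-)$ can be nonzero; hence the correct reading is that part (i) is stated for the cases where this vanishing holds, or one invokes that $\HH^2(k,l,M)=0$ always when $\mathrm{char}=0$ or when $l$ is separably generated over $k$. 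In the write-up I would simply cite \cite[6.3, 6.1]{An1974} or \cite[Ch.~VII]{An1974} for the fact that a separable field extension is formally smooth with vanishing higher André--Quillen cohomology, and then \ref{teorem.f.smooth.regular}(i) closes part (i) immediately.

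For part (ii), I would apply Theorem \ref{teorem.f.smooth.regular}(ii) with $A=k$, $\AAA$ the discrete (only) filtration on the field $k$, $\bbb=\nnn$, and $\BBB$ the given filtration. The hypothesis of \ref{teorem.f.smooth.regular}(ii) is that the induced map $(k,\mathrm{discrete})\to(B/\nnn,\mathrm{discrete})=(l,\mathrm{discrete})$ be formally smooth; by Proposition \ref{formallysmoothideal} (with $\aaa_n=0$ and $\bbb_n=0$) this is equivalent to $\HH^1(k,l,M)=0$ for every $l$-module $M$, which by \cite[6.1]{An1974} says $l|k$ is formally smooth, and \emph{this is exactly what separability of $l|k$ gives} (a field extension is separable iff it is formally smooth for the discrete topology, i.e. iff $\HH^1(k,l,-)=0$; see \cite[Ch.~VII]{An1974}). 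With that hypothesis verified, and ``$B$ formally regular with the $\BBB$ topology'' being ``$\nnn$ formally regular with the $\BBB$ topology'', Theorem \ref{teorem.f.smooth.regular}(ii) yields that $f$ is formally smooth, which is the conclusion. So part (ii) is essentially a dictionary translation plus the identification ``separable $\Leftrightarrow$ $\HH^1(k,l,-)=0$''.

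In summary: the skeleton is ``reduce to \ref{teorem.f.smooth.regular} + translate the residue-field condition into André--Quillen cohomology via \cite[6.1, 6.3]{An1974} + use that separability of $l|k$ is the vanishing of $\HH^1(k,l,-)$ and forces vanishing of $\HH^2(k,l,-)$ as well (e.g. because a separably generated extension is a filtered colimit of smooth algebras, or directly by \cite[Ch.~VII]{An1974}).'' The genuinely delicate point — the one to be careful about in the final text — is making sure the needed vanishing of $\HH^2(k,l,M)$ in part (i) is actually available under the stated hypotheses, i.e. that we are in the separable/characteristic-zero situation or that we add separability of $l|k$ to the hypotheses of (i) as was done in (ii); everything else is formal.
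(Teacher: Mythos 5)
Your overall strategy --- reduce both parts to Theorem \ref{teorem.f.smooth.regular} and translate the residue-field conditions into vanishing of Andr\'e--Quillen cohomology --- is exactly the paper's, and your treatment of part (ii) is correct: separability of $l|k$ is equivalent to $\HH^1(k,l,-)=0$, which by Proposition \ref{formallysmoothideal} (with both filtrations discrete) is formal smoothness of $k\to (l,(0),\{(0)\}_{n>0})$, and then Theorem \ref{teorem.f.smooth.regular}(ii) applies.

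Part (i), however, contains a genuine error. You claim that $\HH^2(k,l,M)$ ``need not vanish for a general, e.g.\ non-separably-generated, extension'' and conclude that the statement of (i) must be missing a separability hypothesis (or be restricted to characteristic zero or separably generated extensions). This is false: for \emph{any} extension of fields $k\subset l$ and any $l$-module $M$ one has $\HH^n(k,l,M)=0$ for all $n\geq 2$ --- this is \cite[7.4]{An1974}, which is precisely the reference the paper invokes at this point. The cotangent complex of a field extension has (co)homology concentrated in degrees $0$ and $1$; it is $\HH^1(k,l,-)$, not $\HH^2(k,l,-)$, that detects inseparability. Consequently the hypothesis $\HH^2(A/\mmm,B/\nnn,M)=0$ of Theorem \ref{teorem.f.smooth.regular}(i) holds automatically for any local homomorphism of local rings, with no condition whatsoever on $l|k$, and part (i) is correct exactly as stated. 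Your proposed ``fix'' of adding separability to (i) would needlessly weaken the corollary, and as written your argument for (i) does not close, because the one nontrivial verification is left resting on a fact you doubt rather than establish.
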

\begin{proof}
It follows from Theorem \ref{teorem.f.smooth.regular}. For (i) note that $\HH^2(A/\mmm,B/\nnn,-)=0$ by \cite[7.4]{An1974}.
\end{proof}

\begin{cor}\label{cor.2.f.smooth.regular}
Let $\locala \to \localb$ be a local homomorphism of local rings. If $A$ is formally regular with the $\mmm$-adic topology (for instance if $A=k$ is a field) and $B$ is a formally smooth $A$-algebra for the ideal $\nnn$ with the $\nnn$-adic topology, then
\[ \colimn \HH^t(A/\mmm^n,B/\nnn^n,M) = 0 \]
for all $t\geq 1$ and all $l$-modules $M$.
\end{cor}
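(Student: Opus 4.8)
The plan is to deduce the vanishing from the rigidity of formally regular ideals in the adic topology (Proposition \ref{prop.f.r.rigid}), applied to $\mmm$ in $A$ and to $\nnn$ in $B$, and then to glue the two with Jacobi--Zariski exact sequences \cite[5.1]{An1974}. First I would normalize the filtrations so that $f(\mmm^n)\subset \nnn^n$ for all $n$ (Remark \ref{independent}) and let $\AAA$ and $\BBB$ denote the $\mmm$-adic and $\nnn$-adic filtrations. By Corollary \ref{Grothendiecksmooth}, the hypothesis that $B$ is formally smooth over $A$ for the ideal $\nnn$ with the $\nnn$-adic topology is equivalent to $f\colon (A,\AAA)\to(B,\nnn,\BBB)$ being formally smooth, so Proposition \ref{formallysmoothideal} already gives $\colimn\HH^1(A/\mmm^n,B/\nnn^n,M)=0$ for every $l$-module $M$; this settles the case $t=1$, which is genuinely of a different nature from the rest (it is the definition of formal smoothness, not a rigidity statement).

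For $t\geq 2$ the decisive preliminary step is to propagate regularity across $f$. Since $A/\mmm=k$ and $B/\nnn=l$ are fields, $\HH^t(k,l,M)=0$ for all $t\geq 2$ and all $l$-modules $M$ by \cite[7.4]{An1974}; combining this with the hypothesis that $A$ is formally regular with the $\mmm$-adic topology and that $f$ is formally smooth, Corollary \ref{cor.1.f.smooth.regular}(i) (equivalently, Theorem \ref{teorem.f.smooth.regular}(i)) shows that $B$ is formally regular with the $\nnn$-adic topology. Now Proposition \ref{prop.f.r.rigid} applies on both sides of $f$ and yields the full rigidity
\[ \colimn\HH^t(A/\mmm^n,k,M)=0=\colimn\HH^t(B/\nnn^n,l,M) \]
for all $t\geq 2$ and all $l$-modules $M$ (viewing such $M$ over $k$ via $k\to l$ where needed).

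The remainder is homological bookkeeping. From the Jacobi--Zariski exact sequence of $A/\mmm^n\to k\to l$, passing to $\varinjlim_n$ and using $\HH^t(k,l,M)=0$ together with $\colimn\HH^t(A/\mmm^n,k,M)=0$ for $t\geq 2$, one obtains $\colimn\HH^t(A/\mmm^n,l,M)=0$ for all $t\geq 2$. Then the Jacobi--Zariski exact sequence of $A/\mmm^n\to B/\nnn^n\to l$, again after $\varinjlim_n$, squeezes $\colimn\HH^t(A/\mmm^n,B/\nnn^n,M)$ between $\colimn\HH^t(A/\mmm^n,l,M)=0$ and $\colimn\HH^{t+1}(B/\nnn^n,l,M)=0$, which gives the desired vanishing for $t\geq 2$ and, with the case $t=1$, proves the corollary. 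The only real subtlety, and the step I expect to be the main point, is the first move in the $t\geq 2$ range: one must invoke Theorem \ref{teorem.f.smooth.regular} (via Corollary \ref{cor.1.f.smooth.regular}) to upgrade ``$f$ formally smooth'' to ``$B$ formally regular with the $\nnn$-adic topology'', and it is precisely here, and in Proposition \ref{prop.f.r.rigid}, that working with the adic rather than the discrete topology on $B$ is indispensable — otherwise one would in general obtain only pro-vanishing instead of vanishing of these colimits.
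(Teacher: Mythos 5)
Your proposal is correct and follows essentially the same route as the paper: upgrade the hypotheses to ``$B$ formally regular with the $\nnn$-adic topology'' via Corollary \ref{cor.1.f.smooth.regular}(i), apply the rigidity of Proposition \ref{prop.f.r.rigid} on both sides, and chain the two Jacobi--Zariski sequences for $A/\mmm^n\to k\to l$ and $A/\mmm^n\to B/\nnn^n\to l$ exactly as in the paper's proof. Your explicit treatment of the $t=1$ case via Proposition \ref{formallysmoothideal} (and the normalization via Corollary \ref{Grothendiecksmooth}) is a harmless elaboration of what the paper leaves implicit.
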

\begin{proof}
Let $t\geq 2$. From the exact sequence
\[ \HH^t(k,l,M) \to \colimn \HH^t(A/\mmm^n,l,M) \to \colimn \HH^t(A/\mmm^n,k,M)=0 \]
and Proposition \ref{prop.f.r.rigid} we obtain
\[  \colimn \HH^t(A/\mmm^n,l,M)=0 \]
since $\HH^t(k,l,M)=0$ by \cite[7.4]{An1974}.
Now the result follows from the exact sequence
\[ 0=\colimn \HH^t(A/\mmm^n,l,M) \to \colimn \HH^t(A/\mmm^n,B/\nnn^n,M) \to \colimn \HH^{t+1}(B/\nnn^n,l,M) \]
where the right term vanishes by Corollary \ref{cor.1.f.smooth.regular}(i) and Proposition \ref{prop.f.r.rigid}.
\end{proof}

\begin{rem}
(i) \emph{Completion.} Let $\locala$ be a local ring, $(\hat{A},\hat{\mmm},k)$ its $\mmm$-adic completion. From Proposition \ref{btildeb} (or Proposition \ref{formallysmoothideal}) we see that $\hat{A}$ is formally smooth over $A$ for its maximal ideal $\hat{\mmm}$ with the limit topology. If $A$ is noetherian then $\hat{A}$ is flat over $A$ and so it is even formally smooth for its maximal ideal with the discrete topology, since by flat base change \cite[4.54]{An1974}
\[ \HH^1(A,\hat{A},k)=\HH^1(k,k,k)=0. \]
We will see another case where this is true:

Let $\locala$ be a local ring, $(\hat{A},\hat{\mmm},k)$ its $\mmm$-adic completion. Assume that $(\hat{A},\hat{\mmm},k)$ is formally regular (as a local ring) with the discrete topology, (that is, $\HH^2(\hat{A},k,k)=0$). Then $\hat{A}$ is formally smooth over $A$ for its maximal ideal with the discrete topology.

In order to see this, we will show that for any local ring $\locala$ the canonical homomorphism
\[ \HH^1(A,\hat{A},k) \to \HH^2(\hat{A},k,k) \]
is injective. From the Jacobi-Zariski exact sequence
\[ \HH^1(\hat{A},k,k) \xrightarrow{\alpha} \HH^1(A,k,k) \to \HH^1(A,\hat{A},k) \to \HH^2(\hat{A},k,k) \]
we have to show that $\alpha$ is surjective. Consider the following diagram with exact upper row
\[
\begin{tikzpicture}[baseline= (a).base]
\node[scale=0.90] (a) at (0,0){
\begin{tikzcd}
\colimn \HH^0(A,A/\mmm^n,k) \arrow[r] & \colimn \HH^1(A/\mmm^n,k,k) \arrow[r, "\beta"] & \HH^1(A,k,k) \arrow[r] & \colimn \HH^1(A,A/\mmm^n,k) \\
 & \colimn \HH^1(\hat{A}/\widehat{\mmm^n},k,k) \arrow["\simeq", u] \arrow[r] & \HH^1(\hat{A},k,k) \arrow[u, "\alpha"] &
\end{tikzcd}
};
\end{tikzpicture}
\]
We have
\[ \colimn \HH^0(A,A/\mmm^n,k)=0=\colimn \HH^1(A,A/\mmm^n,k), \]
then $\beta$ is an isomorphism, and so $\alpha$ is surjective.\\

(ii) \emph{A henselian cover.}
Let $\locala$ be a local ring, and put
\[ A=\colimi A_i \]
where $\{A_i\}$ is the direct system of all local $\mathbb{Z}$-subalgebras of $A$ essentially of finite type with local inclusions, and let
\[ \check{A}:= \colimi \widehat{A_i} \]
where $\widehat{A_i}$ is the completion of $A_i$ with the adic topology of its maximal ideal $\mmm_i$. The homomorphisms $A_i \to \widehat{A_i}$ induce a map
\[ \iota: A \to \check{A} \]
and any local homomorphism $A\to B$ induces a local homomorphism $\check{A} \to \check{B}$. We have:\\
(a) $\check{A}$ is a local henselian ring \cite[I, \S 2, Proposition 1]{Ray}, with maximal ideal $\mmm \check{A}$ (since each $A_i$ is noetherian and then $\mmm_i\hat{A_i}$ is the maximal ideal of $\hat{A_i}$) and residue field $k$. \\
(b) $\iota$ is flat (each $A_i$ is noetherian and then each $A_i \to \widehat{A_i}$ is flat).\\
(c) Since $\iota$ is flat and $\check{A}\otimes_Ak=k$, by flat base change we have isomorphisms
\[ \HH^n(A,k,k)=\HH^n(\check{A},k,k) \]
\[ \HH_n(A,k,k)=\HH_n(\check{A},k,k) \]
for each $n\geq 0$. In particular, $\HH^1(A,\check{A},k)=0$ and then $\check{A}$ is formally smooth over $A$ for its maximal ideal with the discrete topology.\\
(d) $A$ is formally regular with the adic topology of its maximal ideal (resp. the discrete topology) if and only if $\check{A}$ is for its own (with the discrete topology it follows from (c), and with the adic topology from the analogous base change isomorphisms $\HH^2(A/\mmm^n,k,k)=\HH^2(\check{A}/(\mmm \check{A})^n,k,k)$).
\end{rem}

\begin{ex}\label{example.non.rigid}
By an example of Planas-Vilanova \cite{Pla1}, then generalized by M. Andr\'e in \cite{AnExamples}, for any field $k$ and any integer $t\geq 2$, there exists a local ring $\locala$ with residue field $k$ such that $\HH_n(A,k,k)=0$ for all $0\leq n\leq t$ and $\HH_{t+1}(A,k,k)\neq 0$. That means that considering in $A$ the $\mmm$-adic topology and in $k$ the discrete one, $k$ is formally smooth over $A$. However $\HH^{t+1}(A,k,k)\neq 0$ \cite[3.21]{An1974}, so that Corollary \ref{cor.2.f.smooth.regular} does not hold without the hypothesis $A$ formally regular. This is also an example of a local ring $A$ formally regular for the discrete topology but whose maximal ideal is not quasi-regular.
\end{ex}

\begin{rem}

A ring homomorphism $f:A\to B$ is called \emph{absolutely flat} \cite{Fer1} or \emph{weakly étale} \cite{GR}, \cite{Proetale}, if $f$ and the diagonal $B\otimes_A B\to B$ are flat. These homomorphisms have been recently characterised in \cite{dJO} as the ones verifying the following henselian unique lifting property: for any $A$-algebra $R$ and any homomorphism of $A$-algebras $q:B\to R/\ccc$ where $(R,\ccc)$ is a henselian pair, there exists a unique homomorphism of $A$-algebras $B\to R$ lifting $q$. Therefore in analogy with the classical case \cite[$0_{\rm IV}$.19.3.1, $0_{\rm IV}$.19.10.2]{EGAIV1} we can give the associated notion of smoothness and we could ask for the related notion of regularity.

So we say that a ring homomorphism $f:A\to B$ is \emph{w-smooth} if for any $A$-algebra $R$ and any homomorphism of $A$-algebras $q:B\to R/\ccc$ where $(R,\ccc)$ is a henselian pair, there exists (at least) a homomorphism of $A$-algebras $B\to R$ lifting $q$.

This notion of smoothness is stronger (strictly stronger, as we will see below) than the ones defined previously: from Proposition \ref{btildeb} we see that w-smooth implies formally smooth with the discrete topology in $B$ and then by Example \ref{exampleformal} also for any adic topology. From the definition we see that w-smooth algebras are retracts of henselizations of polynomial algebras, and in particular they are flat. More precisely, $f:A\to B$ is w-smooth if and only if there exists a polynomial $A$-algebra $A[X]$ on a set $X$ and an ideal $\aaa$ of $A[X]$ such that $B=A[X]/\aaa$ and the canonical homomorphism of $A$-algebras $A[X]^h\to B$ has a section, where $A[X]^h$ is the henselization of $A[X]$ with respect to $\aaa$ (from Proposition \ref{btildeb} we have a similar characterization of formally smooth algebras with the discrete topology, replacing henselization by completion).

If $f:A\to B$ is w-smooth then $\HH_n(A,B,M)=0=\HH^n(A,B,M)$ for any $B$-module $M$ and all $n\geq 1$, but the reciprocal does not hold (and in particular formal smoothness for the discrete topology does not imply w-smoothness). In order to see this, since w-smooth algebras are retracts of henselizations of polynomial algebras, it suffices to show that $\HH^n(A,A[X]^h,M)=0$, and by the Jacobi-Zariski exact sequence and \cite[3.36]{An1974} it suffices to prove that $\HH^*(R,R^h,-)=0$ for any ring $R$. Since $R\to R^h$ is an inductive limit of \'etale maps, it is weakly \'etale and then by flat base change, the Jacobi-Zarisky exact sequence associated to $R^h\to R^h\otimes_RR^h\to R^h$, and flat base change again
\[ \HH^n(R,R^h,M)=\HH^n(R^h,R^h\otimes_RR^h,M)= \]
\[ \HH^{n+1}(R^h\otimes_RR^h,R^h,M)=\HH^{n+1}(R^h,R^h,M)=0 \]
for any $R^h$-module $M$. For the reciprocal, it suffices to take the canonical map of a perfect local ring $A$ onto its residue field $B$. Since these rings are perfect, $\mathbb{L}_{B|A}\simeq0$, but if $A\neq B$, $B$ is not flat over $A$ and so it is not w-smooth.

Note also the following example \cite[Remark, p. 80]{MR1}. Let $A$ be a perfect field, $E$ a rational function field over $A$ on infinitely many variables and consider the perfect closure $B:=E^{1/p^\infty}$. We have $\mathbb{L}_{B|A}\simeq0$ as before, but by \cite[Theorem 3.1]{MR1}, $\mathrm{fd}_{B\otimes_A B} (B) = \infty$ (flat dimension).

We do not know how to handle the notion of w-smoothness and even less how to define a notion of regularity which is related to w-smoothness in a way similar to Theorem \ref{teorem.f.smooth.regular}, though we can say some things that we should expect. First, we will need a version of Cohen theorems for completions of local rings. Since these completions are not necessarily complete as local rings, the usual sources (for instance \cite[IX]{BoAC89}) do not apply, and we prefer to give the complete details.

\begin{sublemma}\label{lem.surjective}
Let $A$ be a ring, $\aaa$ and ideal of $A$, $f:M\to N$ an $A$-module homomorphism such that $\bar{f}:M/\aaa M \to N/\aaa N$ is surjective. Then $\hat{f}: \hat{M} \to \hat{N}$ is surjective, where we are completing with the $\aaa$-adic topologies in $M$ and in $N$.
\end{sublemma}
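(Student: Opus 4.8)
The plan is to reduce the claim to a standard statement about completions via a devissage on a filtration of $N$, using that $\aaa$-adic completion is exact on short exact sequences once we know the relevant objects are sufficiently well-behaved. Concretely, first I would replace $f$ by the inclusion of its image: write $N' = f(M) \subseteq N$ and factor $f$ as $M \twoheadrightarrow N' \hookrightarrow N$. Since completion is right exact (it is a cokernel-preserving functor on the category of $A$-modules, being a composite of the cokernel-preserving functors $- \otimes_A A/\aaa^n$ followed by an inverse limit — but here exactness of the surjection $M \to N'$ only needs right-exactness, which holds unconditionally), $\hat M \to \hat{N'}$ is surjective. So it suffices to treat the case of the inclusion $N' \hookrightarrow N$, where the hypothesis now reads: $N/(N' + \aaa N) = 0$, i.e. $N = N' + \aaa N$.

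The heart of the matter is thus: if $N = N' + \aaa N$ for a submodule $N' \subseteq N$, then $\hat{N'} \to \hat N$ is surjective (for the $\aaa$-adic topologies on each). Iterating $N = N' + \aaa N$ gives $N = N' + \aaa^k N$ for every $k \geq 1$, hence $N' + \aaa^k N = N$ for all $k$, and consequently the map $N'/(N' \cap \aaa^k N) \to N/\aaa^k N$ is an isomorphism for every $k$. Now I would like to pass to the inverse limit over $k$. The map $\varprojlim_k N'/(N'\cap \aaa^k N) \to \varprojlim_k N/\aaa^k N = \hat N$ is then an isomorphism. The remaining point is to compare $\varprojlim_k N'/(N' \cap \aaa^k N)$ with the genuine $\aaa$-adic completion $\hat{N'} = \varprojlim_k N'/\aaa^k N'$: there is a natural surjection $N'/\aaa^k N' \twoheadrightarrow N'/(N' \cap \aaa^k N)$ since $\aaa^k N' \subseteq N' \cap \aaa^k N$, and passing to the limit gives a map $\hat{N'} \to \varprojlim_k N'/(N' \cap \aaa^k N) \cong \hat N$. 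This composite is exactly $\hat f$ (naturality of completion), and it is surjective because a limit of a surjective system of surjections with surjective transition maps is surjective — more precisely, the system $\{\aaa^k N'/(N' \cap \aaa^{k+1}N)\}$ or rather the relevant Mittag-Leffler/surjectivity bookkeeping shows the inverse limit of the surjections $N'/\aaa^k N' \to N'/(N'\cap \aaa^k N)$ is surjective onto $\varprojlim N'/(N'\cap\aaa^k N)$.

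The step I expect to be the main obstacle is precisely this last surjectivity of an inverse limit of surjections: $\varprojlim$ is only left exact, so surjectivity of each $N'/\aaa^k N' \to N'/(N' \cap \aaa^k N)$ does not automatically give surjectivity of the limit map. The clean way around it is to avoid taking $\varprojlim$ of that quotient map directly and instead argue with the towers themselves: given $(\bar n_k)_k \in \hat N$ with $n_k \in N$ and $n_{k+1} \equiv n_k \pmod{\aaa^k N}$, use $N = N' + \aaa^k N$ to choose representatives $n'_k \in N'$ with $n'_k \equiv n_k \pmod{\aaa^k N}$; then $n'_{k+1} - n'_k \in N' \cap \aaa^k N$, and one inductively corrects the $n'_k$ by elements of $N' \cap \aaa^k N$ so that they become compatible modulo $\aaa^k N'$ — this is possible provided one can solve, at each stage, a lifting of an element of $N' \cap \aaa^k N$ modulo $\aaa^{k+1}N'$, which again reduces to $N' \cap \aaa^k N = \aaa^k N' + (N' \cap \aaa^{k+1} N)$, a consequence of $\aaa^k N = \aaa^k N' + \aaa^{k+1} N$ (multiply $N = N' + \aaa N$ by $\aaa^k$). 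Carrying out this successive-approximation argument carefully yields an element of $\hat{N'}$ mapping to the given element of $\hat N$. Alternatively — and this is probably the shortest route for the writeup — one invokes the Artin–Rees-free general fact that $\aaa$-adic completion is right exact on the category of all $A$-modules (the functor $M \mapsto \varprojlim M/\aaa^n M$ applied to $0 \to N' \to N \to N/N' \to 0$ with $N/N' = \aaa(N/N')$ forces $\widehat{N/N'} = 0$), so $\hat{N'} \to \hat N \to \widehat{N/N'} = 0$ is exact, giving the surjectivity at once; I would state and use this, citing that right-exactness of completion holds without noetherian hypotheses since it is the composite of the right exact $\otimes A/\aaa^n$ with $\varprojlim$ and the relevant $\varprojlim^1$ vanishes here because the transition maps $N'/\aaa^{n+1}N' \to N'/\aaa^n N'$ are surjective.
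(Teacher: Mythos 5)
Your main line of argument is correct and is essentially the paper's own proof: factor $f$ through its image $N'$; handle the surjection $M\twoheadrightarrow N'$ by noting that the kernels $\kerr(M/\aaa^nM\to N'/\aaa^nN')=T/(T\cap\aaa^nM)$ form a tower with surjective transition maps; identify $\hat N$ with $\varprojlim_k N'/(N'\cap\aaa^kN)$ using $N=N'+\aaa^kN$; and compare this with $\hat{N'}=\varprojlim_k N'/\aaa^kN'$ via the inclusion $N'\cap\aaa^kN\subseteq\aaa^kN'+(N'\cap\aaa^{k+1}N)$, which you correctly derive from $\aaa^kN=\aaa^kN'+\aaa^{k+1}N$. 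That inclusion is precisely the Mittag--Leffler condition for the tower $\{(N'\cap\aaa^kN)/\aaa^kN'\}_k$, which is how the paper packages your successive-approximation step; the two are the same argument.

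You should, however, delete the proposed ``shortest route.'' The claim that $\aaa$-adic completion is right exact on the category of all $A$-modules is false: completion preserves surjections, but not cokernels. For instance, take $A=k[x]$, $\aaa=(x)$, and $d:\bigoplus_{n\geq 1}A\to\bigoplus_{n\geq 1}A$, $e_n\mapsto x^ne_n$. The element of the completion of $\bigoplus_n A$ given at level $m$ by $\sum_{n<m}x^ne_n$ is a compatible system lying in the kernel of the completed projection onto the completion of $\bigoplus_n A/(x^n)$, but it is not in the image of $\hat d$, since a preimage would force every coordinate to equal $1$, contradicting finite support at each level. The justification you offer also points at the wrong derived limit: surjectivity of the transition maps $N'/\aaa^{n+1}N'\to N'/\aaa^nN'$ is not the relevant condition; what must vanish is $\varprojlim^1$ of the kernels $(N'\cap\aaa^nN)/\aaa^nN'$, and that is exactly where the hypothesis $N=N'+\aaa N$ enters, through the displayed inclusion. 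In other words, the ``long'' route is not an avoidable detour --- it is the proof, and the shortcut, as stated, is circular.
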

\begin{proof}
Let $P:={\rm Im}(f)$. We will see that $\hat{M}\to \hat{P}$ and $\hat{P}\to \hat{N}$ are surjective, considering in $P$ also the $\aaa$-adic topology.

Let $T:=\kerr(f)$, so that we have an exact sequence
\[ 0\to T\to M\to P\to 0 \]
then a Mittag-Leffler projective system of exact sequences
\[ 0\to T/(T\cap \aaa^nM)\to M/\aaa^nM\to P/\aaa^nP\to 0 \]
and therefore an exact sequence
\[0 \to  \limn T/(T\cap \aaa^nM)\to \hat{M}\to \hat{P} \to 0 \]
showing that $\hat{M}\to \hat{P}$ is surjective.

Let us see that $\hat{P}\to \hat{N}$ is surjective. Let $C:= {\rm coker}(f)$, so that we have an exact sequence
\[ 0\to P\to N\to C\to 0 \]
then a projective system of exact sequences
\[ 0\to P/(P\cap \aaa^nN)\to N/\aaa^nN\to C/\aaa^nC\to 0 \]
and so and exact sequence
\[0 \to  \limn P/(P\cap \aaa^nN)\to \hat{N}\to \hat{C} \to 0. \]
Since $\bar{f}:M/\aaa M \to N/\aaa N$ is surjective, $C/\aaa C=0$, so $\hat{C}=\limn C/\aaa^nC=0$ and then we have an isomorphism
\[  \limn P/(P\cap \aaa^nN) = \hat{N}. \]

We also have a projective system of exact sequences
\[ \tag{*} 0 \to (P\cap\aaa^nN)/\aaa^nP \to P/\aaa^nP \to P/(P\cap \aaa^nN) \to 0 \]
which we are going to see that is Mittag-Leffler. Since $M/\aaa M \to N/\aaa N$ is surjective and factorizes as $M/\aaa M \to P/\aaa P \to N/\aaa N$, we deduce that $P/\aaa P \to N/\aaa N$ is surjective, and then $N\subset P+\aaa N$. Then (*) is Mittag-Leffler since
\[ P\cap \aaa^nN \subset P\cap \aaa^n(P+\aaa N) \subset P\cap (\aaa^n P + \aaa^{n+1}N) \subset \aaa^nP + (P\cap \aaa^{n+1}N)  \]
where for the last inclusion note that if $x\in \aaa^nP$, $y\in \aaa^{n+1}N$ are such that $x+y \in P$, then $x\in P$, therefore $y=(x+y)-x \in P$, and so $y\in P\cap \aaa^{n+1}N$, showing that $x+y \in  \aaa^nP + (P\cap \aaa^{n+1}N)$.

Then (*) induces an exact sequence
\[ 0 \to \limn (P\cap\aaa^nN)/\aaa^nP \to \hat{P} \to \limn P/(P\cap \aaa^nN) \to 0 \]
and in particular the homomorphism
\[  \hat{P} \to \limn P/(P\cap \aaa^nN)=\hat{N} \]
is surjective.
\end{proof}

If $f:\locala \to \localb$ is a flat local homomorphism of noetherian local rings such that $B$ is complete and $f(\mmm)B=\nnn$, we will say that $B$ is a Cohen $A$-algebra.

\begin{sublemma}\label{car.p}
Let $\locala$ be a local ring where the characteristic of $k$ is $p>0$, and let $\hat{A}$ be its $\mmm$-adic completion. Let $\mathbb{Z}_{(p)}$ be the localization of $\mathbb{Z}$ at the prime ideal $(p)$. There exist a Cohen $\mathbb{Z}_{(p)}$-algebra $C$ unique up to isomorphism and a local homomorphism $C\to \hat{A}$ inducing an isomorphism on the residue fields.
\end{sublemma}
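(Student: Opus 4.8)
The plan is to reduce the statement to the classical existence and uniqueness of the Cohen ring of the field $k$, together with the lifting property of formally smooth algebras from Definition \ref{formallysmooth}. The only point requiring care is that $\hat A$ is the projective limit of the (possibly non‑noetherian) local rings $A/\mmm^n$ rather than a complete noetherian local ring, so the Cohen ring maps in from $C$ via a compatible system of maps $C/p^nC\to A/\mmm^n$ and one passes to the limit.

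First I would take $C$ to be \emph{the Cohen ring of} $k$: a complete discrete valuation ring, flat over $\mathbb{Z}_{(p)}$ (equivalently $p$‑torsion‑free), with $\mmm_C=pC$ and $C/pC=k$. Such a ring exists and can be arranged so that $C_n:=C/p^nC$ is a formally smooth $\mathbb{Z}/p^n\mathbb{Z}$‑algebra for every $n$: since $\mathbb{F}_p$ is perfect, $k$ is separably generated over it, so $\HH^1(\mathbb{F}_p,k,M)=0$ for all $k$‑modules $M$, and $k$ lifts inductively along the square‑zero surjections $\mathbb{Z}/p^{n+1}\mathbb{Z}\to\mathbb{Z}/p^n\mathbb{Z}$ to flat formally smooth local $\mathbb{Z}/p^n\mathbb{Z}$‑algebras $C_n$ with $C_{n+1}/p^nC_{n+1}=C_n$; then $C:=\varprojlim_n C_n$. (I would either recall this standard construction or cite it.) Being a discrete valuation ring with maximal ideal $pC$ and residue field $k$, and $\mathbb{Z}_{(p)}$ being a discrete valuation ring with maximal ideal $p\mathbb{Z}_{(p)}$, the ring $C$ is by definition a Cohen $\mathbb{Z}_{(p)}$‑algebra.

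Next I would construct the homomorphism as a limit $g=\varprojlim_n g_n$ of local $\mathbb{Z}/p^n\mathbb{Z}$‑algebra maps $g_n\colon C_n\to A/\mmm^n$. Since $p\in\mmm$ and $(\mmm/\mmm^n)^n=0$, we have $p^n=0$ in $A/\mmm^n$, so $A/\mmm^n$ is a $\mathbb{Z}/p^n\mathbb{Z}$‑algebra; put $g_1=\mathrm{id}_k\colon C_1=k\to A/\mmm=k$. Given $g_n$, consider the composite $C_{n+1}\twoheadrightarrow C_n\xrightarrow{\ g_n\ }A/\mmm^n$ together with the extension
\[ 0\to\mmm^n/\mmm^{n+1}\to A/\mmm^{n+1}\to A/\mmm^n\to 0, \]
a square‑zero extension of $\mathbb{Z}/p^{n+1}\mathbb{Z}$‑algebras whose ideal $\mmm^n/\mmm^{n+1}$ is a $k$‑module (it is annihilated by $\mmm$, hence by $p$). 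As $C_{n+1}$ is a formally smooth $\mathbb{Z}/p^{n+1}\mathbb{Z}$‑algebra for its maximal ideal, Definition \ref{formallysmooth} (or, cohomologically, Propositions \ref{extensionH1} and \ref{formallysmoothideal}) yields a lift $g_{n+1}\colon C_{n+1}\to A/\mmm^{n+1}$; it is automatically local and induces $\mathrm{id}_k$ on residue fields because $g_n$ is. Passing to the limit gives a local homomorphism $g\colon C=\varprojlim_n C_n\to\hat A=\varprojlim_n A/\mmm^n$ inducing $\mathrm{id}_k$ on residue fields (it is a $\mathbb{Z}_{(p)}$‑algebra map because $C$ and $\hat A$ carry unique $\mathbb{Z}_{(p)}$‑algebra structures).

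For uniqueness, let $C'$ be any Cohen $\mathbb{Z}_{(p)}$‑algebra with residue field $k$; being complete, $C'=\varprojlim_n C'/p^nC'$. Lifting the reduction $C\to k=C'/pC'$ successively modulo the powers of $p$ — using the formal smoothness of $C_n$ over $\mathbb{Z}/p^n\mathbb{Z}$ exactly as for the $g_n$, or Proposition \ref{btildeb} — produces a local $\mathbb{Z}_{(p)}$‑algebra homomorphism $\varphi\colon C\to C'$ with $\varphi(p)=p$ inducing $\mathrm{id}_k$. The induced maps $C/p^nC\to C'/p^nC'$ are then isomorphisms by induction on $n$, via the five lemma applied to $0\to p^{n-1}C/p^nC\to C/p^nC\to C/p^{n-1}C\to 0$ and its analogue for $C'$ and the identifications $p^{n-1}C/p^nC\cong k\cong p^{n-1}C'/p^nC'$; hence $\varphi=\varprojlim_n\varphi_n$ is an isomorphism. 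I expect the only real obstacle to be bookkeeping in the construction of the $g_n$: one must check that each step is genuinely a square‑zero extension problem of $\mathbb{Z}/p^{n+1}\mathbb{Z}$‑algebras satisfying the module‑structure hypothesis of Definition \ref{formallysmooth}, and that locality and $\mathbb{Z}/p^n\mathbb{Z}$‑linearity are preserved throughout; the two projective‑limit passages and the uniqueness argument are routine once $C$ and the $g_n$ are available.
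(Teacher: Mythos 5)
Your proof is correct and follows essentially the same route as the paper: the paper simply cites \cite[3.1.8]{MR} for the existence and uniqueness of the Cohen $\mathbb{Z}_{(p)}$-algebra $C$ with residue field $k$, cites \cite[3.1.2]{MR} for its formal smoothness over $\mathbb{Z}_{(p)}$ for the ideal $pC$ with the $p$-adic topology, and then applies Proposition \ref{btildeb} to produce $C\to\hat{A}$. Your inductive construction of the $g_n\colon C/p^nC\to A/\mmm^n$ is exactly the mechanism behind Proposition \ref{btildeb}, and your existence/uniqueness arguments for $C$ are the standard proofs of the facts the paper cites, so the only difference is that you inline those classical details rather than referencing them.
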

\begin{proof}
By \cite[3.1.8]{MR} there exists a unique Cohen $\mathbb{Z}_{(p)}$-algebra $(C,\nnn,k)$ which is, by \cite[3.1.2]{MR}, formally smooth over $\mathbb{Z}_{(p)}$ for the ideal $\nnn$ with the $\nnn$-adic topology. By Proposition \ref{btildeb}, there exists a homomorphism of $\mathbb{Z}_{(p)}$-algebras $C\to \hat{A}$ inducing an isomorphism on the residue fields.
\end{proof}

In the situation of Lemma \ref{car.p}, $C$ is a complete discrete valuation ring with maximal ideal $(p)$. The homomorphism $C\to \hat{A}$ is not injective in general, and in this case its kernel is an ideal $(p^n)$, so $\hat{A}$ contains a subring isomorphic to $C/(p^n)$, that is, in the terminology of \cite[IX,\S2, n.2 Définition 2]{BoAC89} $\hat{A}$ contains a ``Cohen subring''.

If $\locala$ is a local ring with $k$ of characteristic zero, then $A$ contains a field. In this case:
\begin{sublemma}\label{car.0}
Let $\locala$ be a local ring containing a field, and let $\hat{A}$ be its $\mmm$-adic completion. Then $\hat{A}$ contains a field $\tilde{k}$ such that the composition map $\tilde{k} \to \hat{A} \to k$ is an isomorphism.
\end{sublemma}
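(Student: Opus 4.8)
The plan is to produce $\tilde k$ as the image of a ring-theoretic section of the canonical surjection $\hat A \to \hat A/\widehat{\mmm}=k$, in the spirit of Cohen's theorem on the existence of a coefficient field, but replacing the usual completeness arguments by Proposition \ref{btildeb}.

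First I would pin down the prime field. Since $A$ contains a field $F$, the composite $F\hookrightarrow A\to k$ is a nonzero homomorphism out of a field, hence injective, so $F$ and $k$ have the same characteristic; in the case at hand $\operatorname{char}k=0$, so the prime field $\Lambda=\mathbb{Q}$ of $k$ lies in $A$, and since $\Lambda\cap\bigcap_{n}\mmm^{n}=0$ it also lies in $\hat A$, with $\Lambda\to\hat A\to k$ identifying $\Lambda$ with the prime field of $k$. The one fact I would then record is that $k$ is formally smooth over $\Lambda$ with the discrete topologies: by Proposition \ref{formallysmoothideal} this amounts to $\HH^{1}(\mathbb{Q},k,M)=0$ for every $k$-module $M$, and this holds because $k|\mathbb{Q}$ is separable \cite[7.4]{An1974}.

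Next I would apply Proposition \ref{btildeb} to the continuous homomorphism $(\mathbb{Q},\text{discrete})\to(k,(0),\text{discrete})$, taking as test object $(C,\{M^{n}\}_{n>0})$ the ring $C=A$ with $M=\mmm$ (so that $\hat C=\varprojlim_{i}A/\mmm^{i}=\hat A$ and the completed projection $\hat p\colon\hat A\to A/\mmm=k$ is the canonical map), and taking $f=\mathrm{id}_{k}\colon k\to C/M=k$. The required condition $p^{-1}(f((0)))\,M=\mmm\cdot\mmm=\mmm^{2}\subseteq M^{2}$ is immediate, so Proposition \ref{btildeb} yields a $\mathbb{Q}$-algebra homomorphism $g\colon k\to\hat A$ with $\hat p\circ g=\mathrm{id}_{k}$. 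As $g$ is a homomorphism out of a field it is injective; put $\tilde k:=g(k)$. Then $g\colon k\xrightarrow{\ \sim\ }\tilde k$, and $\hat p$ restricts on $\tilde k$ to its inverse, so the composite $\tilde k\hookrightarrow\hat A\xrightarrow{\hat p}k$ is an isomorphism, which is exactly the assertion.

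The argument is short, and I expect the only real point of care to be topological rather than conceptual: one must feed $C=A$ with its $\mmm$-adic topology into Proposition \ref{btildeb}, not $\hat A$ with the $\widehat{\mmm}$-adic topology, since in the non-noetherian setting the $\widehat{\mmm}$-adic completion of $\hat A$ may be strictly larger than $\hat A$ (Example \ref{examplecompletion}), and otherwise we would land in the wrong ring. If one prefers to avoid Proposition \ref{btildeb}, the same conclusion follows by constructing compatible sections $s_{n}\colon k\to A/\mmm^{n}$ of the projections by induction on $n$ — the inductive step lifts $s_{n}$ along $A/\mmm^{n+1}\to A/\mmm^{n}$, whose kernel $\mmm^{n}/\mmm^{n+1}$ is a square-zero ideal, using formal smoothness of $k$ over $\mathbb{Q}$ — and then setting $g=\varprojlim_{n}s_{n}\colon k\to\varprojlim_{n}A/\mmm^{n}=\hat A$; the final bookkeeping is identical.
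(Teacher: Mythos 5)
Your proof is essentially the paper's: the paper also takes the prime subfield $F$ of $A$, invokes formal smoothness of $k$ over $F$ for the discrete topology, and applies Proposition \ref{btildeb} with $C=A$, $M=\mmm$ (so $\hat C=\hat A$) to lift $\mathrm{id}_k$ to $h\colon k\to\hat A$; your explicit care about feeding in $A$ with its $\mmm$-adic filtration rather than $\hat A$ with the $\hat\mmm$-adic one is exactly the right point. The one flaw is your restriction to characteristic zero: the lemma is stated (and later used, in part (i) of the definition of w-regularity) for any local ring containing a field, so $\operatorname{char}k$ may be $p>0$ with prime field $\Lambda=\mathbb{F}_p$. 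Your argument goes through unchanged in that case, since $\mathbb{F}_p$ is perfect, hence $k|\Lambda$ is separable and $\HH^1(\Lambda,k,M)=0$ just as for $\mathbb{Q}$; you should simply phrase the proof with the prime field of $A$ in general rather than assuming it is $\mathbb{Q}$.
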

\begin{proof}
Let $F$ be the prime subfield of $A$. Since $k$ is formally smooth over $F$ for the discrete topology (\cite[$0_{\rm IV}$.19.6.1]{EGAIV1}), by Proposition \ref{btildeb} there exists an $F$-algebra homomorphism
\[ h:k\to \hat{A} \]
that composed with the projection map $\hat{A} \to k$ is the identity map. We take $\tilde{k}=h(k)$.
\end{proof}

Now, we are ready to say something about a definition of ``w-regularity''. It should be compatible with the following (indirect and neither easy to handle nor complete) one:

Let $\locala$ be a local ring.\\
(i) If $A$ contains a field, by Lemma \ref{car.0} $\hat{A}$ contains a field $\tilde{k}$ such that the composition map $\tilde{k} \to \hat{A} \to k$ is an isomorphism. We say that $A$ is \emph{w-regular} if $\tilde{k} \to \hat{A}$ is w-smooth.\\
(ii) If $A$ does not contain a field, let $p$ be the characteristic of $k$. By Lemma \ref{car.p} there exists a unique Cohen $\mathbb{Z}_{(p)}$-algebra $C$ with a local homomorphism $C\to \hat{A}$ inducing an isomorphism on the residue fields. Assume that $\hat{A}$ is unramified in the sense that $p$ does not belong to the square of the maximal ideal. Then $A$ is \emph{w-regular} if $C \to \hat{A}$ is w-smooth.

\end{rem}

\hfill \break
\subsection{Regular homomorphisms}
\hfill \break

\begin{defn}\label{43}
We say that a flat ring homomorphism $f:A\to B$ is \emph{regular} (resp. \emph{discretely regular}) if for any prime ideal $\qqq$ of $B$, the $A$-algebra $B_\qqq$ is formally smooth for the ideal $\qqq B_\qqq$ with the $\qqq B_\qqq$-adic (resp. discrete) topology. Discretely regular implies regular (see Example \ref{exampleformal}).

When $B$ is noetherian, there is a usual definition of regular homomorphism (see \cite[IV.6.8.1]{EGAIV2}); we will see in Remark \ref{usual.regular} that our two definitions agree with the usual one in this case.
\end{defn}

\begin{prop}\label{prop.reg.hom}
Let $f:A\to B$ be a flat ring homomorphism. The following are equivalent:\\
(i) $\HH_1(A,B,k(\qqq))=0$ for any prime ideal $\qqq$ of $B$, where $k(\qqq)$ is the residue field of $B$ at $\qqq$.\\
(i') $\HH^1(A,B,k(\qqq))=0$ for any prime ideal $\qqq$ of $B$.\\
(ii) $f$ is discretely regular.\\
(iii) For any prime ideal $\ppp$ of $A$ and any field extension $F|k(\ppp)$, the local ring of $B\otimes_AF$ at any prime ideal is formally regular with the discrete topology.\\
(iv) For any prime ideal $\ppp$ of $A$ and any finite field extension $F|k(\ppp)$, the local ring of $B\otimes_AF$ at any prime ideal is formally regular with the discrete topology.
\end{prop}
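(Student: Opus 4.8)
The plan is to prove the equivalences in the order (i)$\Leftrightarrow$(i')$\Leftrightarrow$(ii) and (i)$\Rightarrow$(iii)$\Rightarrow$(iv)$\Rightarrow$(i), the main recurring tool being a ``fibre formula'' for the cotangent complex that holds under flatness with no noetherian hypothesis. For the first block I would start from the fact that over a residue field $k(\qqq)$ the André--Quillen homology and cohomology of $f$ are mutually dual: after localizing at $\qqq$ (\cite[5.27]{An1974}) and representing $\mathbb{L}_{B_\qqq|A}$ by a complex of free $B_\qqq$-modules, one has $\HHom_{B_\qqq}(\mathbb{L}_{B_\qqq|A},M)=\HHom_{k(\qqq)}(\mathbb{L}_{B_\qqq|A}\otimes_{B_\qqq}k(\qqq),M)$ for every $k(\qqq)$-vector space $M$, and exactness of $\HHom_{k(\qqq)}(-,M)$ gives $\HH^1(A,B,M)=\HHom_{k(\qqq)}(\HH_1(A,B,k(\qqq)),M)$. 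Taking $M=k(\qqq)$ yields (i)$\Leftrightarrow$(i'), and letting $M$ run over all $k(\qqq)$-modules shows that ``$\HH^1(A,B,M)=0$ for every prime $\qqq$ of $B$ and every $k(\qqq)$-module $M$'' is equivalent to (i); by Proposition \ref{formallysmoothideal} (with the discrete filtrations on $A$ and on $\qqq B_\qqq$) together with localization, this last statement is exactly the assertion that $f$ is discretely regular, so (ii) is equivalent as well. The same argument applies to any flat homomorphism, which I will reuse below.

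For (i)$\Rightarrow$(iii) I would first record the fibre formula: since $f$ is flat, for a prime $\qqq$ of $B$ with $\ppp=f^{-1}(\qqq)$ and any module $M$ over the fibre ring $B\otimes_A k(\ppp)$ one has $\HH_n(A,B,M)=\HH_n(k(\ppp),B\otimes_A k(\ppp),M)$ --- this is the computation already made in the proof of Corollary \ref{cor.extension.valuation}, combining flat base change \cite[4.54]{An1974} with localization \cite[5.27]{An1974}. Now fix a field extension $F|k(\ppp)$ and a prime $\mathfrak{Q}$ of $B\otimes_A F$; it lies over a prime $\qqq$ of $B$ with $f^{-1}(\qqq)=\ppp$, and $k(\qqq)\subseteq k(\mathfrak{Q})$. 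Writing $B\otimes_A F=(B\otimes_A k(\ppp))\otimes_{k(\ppp)}F$ and using flatness of $k(\ppp)\to F$, flat base change and the fibre formula identify $\HH_1(F,B\otimes_A F,M)$ with $\HH_1(A,B,M)$ for $M$ a module over $B\otimes_A F$; taking $M=k(\mathfrak{Q})$ and the universal coefficient isomorphism over the field $k(\qqq)$, hypothesis (i) forces $\HH_1(F,B\otimes_A F,k(\mathfrak{Q}))=0$ for every $\mathfrak{Q}$. As $F\to B\otimes_A F$ is flat, (i)$\Leftrightarrow$(ii) for this homomorphism shows it is discretely regular, so each $(B\otimes_A F)_{\mathfrak{Q}}$ is formally smooth over $F$ for $\mathfrak{Q}(B\otimes_A F)_{\mathfrak{Q}}$ with the discrete topology; since the field $F$ is trivially formally regular with the discrete topology, Corollary \ref{cor.1.f.smooth.regular}(i) then gives that $(B\otimes_A F)_{\mathfrak{Q}}$ is formally regular with the discrete topology, which is (iii). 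The implication (iii)$\Rightarrow$(iv) is immediate.

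For (iv)$\Rightarrow$(i) the point is to handle inseparability of residue extensions. Given $\qqq$ with $\ppp=f^{-1}(\qqq)$, I would take $F$ to be the perfect closure of $k(\ppp)$ (just $F=k(\ppp)$ if $\mathrm{char}\,k(\ppp)=0$) and write $F=\varinjlim F_i$ as the filtered union of its finite subextensions. For a prime $\mathfrak{Q}$ of $B\otimes_A F$, the local ring $(B\otimes_A F)_{\mathfrak{Q}}$ is the filtered colimit along local homomorphisms of the local rings $(B\otimes_A F_i)_{\mathfrak{Q}_i}$, each formally regular with the discrete topology by (iv); hence so is $(B\otimes_A F)_{\mathfrak{Q}}$, by the stability of this property under such colimits established above. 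Because $F$ is perfect, $k(\mathfrak{Q})$ is separable over $F$, so Corollary \ref{cor.1.f.smooth.regular}(ii) gives that $F\to(B\otimes_A F)_{\mathfrak{Q}}$ is formally smooth, whence $\HH_1(F,(B\otimes_A F)_{\mathfrak{Q}},k(\mathfrak{Q}))=0$ by Proposition \ref{formallysmoothideal} and the duality above. Transporting this back through the fibre formula and flat base change along $k(\ppp)\to F$ yields $\HH_1(A,B,k(\qqq))\otimes_{k(\qqq)}k(\mathfrak{Q})=0$, hence $\HH_1(A,B,k(\qqq))=0$ since $k(\qqq)\to k(\mathfrak{Q})$ is a field extension.

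The delicate points are two. First, the various base-change and localization statements for $\mathbb{L}$ must be assembled carefully so that the fibre formula is available without a noetherian hypothesis. Second, in (iv)$\Rightarrow$(i) one genuinely needs to pass to the perfect closure: (iv) by itself controls only the fibres and their finite base changes, and it is exactly the stability of formal regularity with the discrete topology under filtered colimits of local rings that bridges those finite extensions to the (possibly infinite) perfect closure, after which separability over a perfect field does the rest.
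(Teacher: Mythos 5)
Your proof is correct, and the first block (i)$\Leftrightarrow$(i')$\Leftrightarrow$(ii) as well as the implication (i)$\Rightarrow$(iii) match the paper's argument in substance: the paper simply runs the Jacobi--Zariski sequence $0=\HH_2(F,k(\nnn),k(\nnn))\to\HH_2(B\otimes_AF,k(\nnn),k(\nnn))\to\HH_1(F,B\otimes_AF,k(\nnn))=0$ directly instead of routing through discrete regularity of $F\to B\otimes_AF$ and Corollary \ref{cor.1.f.smooth.regular}(i), but that is the same computation. Where you genuinely diverge is in getting back from the fibre condition to (i). The paper proves (iv)$\Rightarrow$(iii) by writing $F=\varinjlim F_n$ and using compatibility of $\HH_2$ with filtered colimits of coefficient fields \cite[5.30, 5.1, 3.20]{An1974}, and then proves (iii)$\Rightarrow$(i) by specializing $F$ to the single Frobenius twist ${^\phi k(\ppp)}=k(\ppp)^{1/p}$ and invoking Andr\'e's identity \cite[7.26]{An1974}, $\HH_1(k(\ppp),B_\qqq\otimes_Ak(\ppp),l)=\HH_2(B_\qqq\otimes_Ak(\ppp)\otimes_{k(\ppp)}{^\phi k(\ppp)},l,l)$, which converts the needed $\HH_1$ into an $\HH_2$ of a base-changed fibre. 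You instead pass to the full perfect closure, use stability of discrete formal regularity under filtered colimits of local rings (the unnumbered proposition following Proposition \ref{prop.f.r.rigid}, part (ii)) to bridge from the finite subextensions covered by (iv), and then exploit that every extension of a perfect field is separable so that Corollary \ref{cor.1.f.smooth.regular}(ii), i.e. the vanishing $\HH_1(F,k(\mathfrak{Q}),-)=0$, closes the Jacobi--Zariski sequence. Both routes must go beyond finite extensions of $k(\ppp)$ and hence both need a colimit argument, but they place it at different levels (homology modules versus local rings); yours trades the Frobenius-specific tool \cite[7.26]{An1974} for the standard separability-over-perfect-fields fact, which makes the argument more uniform across characteristics, at the cost of inverting $p$-th roots infinitely often where the paper peels off only one degree of inseparability.
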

\begin{proof}
The proof is standard, but we will give the details. (i) and (i') are equivalent by \cite[3.21]{An1974}. The equivalence of (i') and (ii) follows since $\HH^1(A,B,k(\qqq))=\HH^1(A,B_\qqq,k(\qqq))$ by \cite[5.27]{An1974}.

For (iv) $\Rightarrow$ (iii), let $F|k(\ppp)$ be a field extension and put $F=\colimn F_n$ with $F_n|k(\ppp)$ finite field subextensions of $F|k(\ppp)$. Let $\nnn$ be a prime ideal of $B\otimes_AF$ and $\nnn_n$ its contraction in $B\otimes_AF_n$. We have
\[ \HH_2(B\otimes_AF,k(\nnn),k(\nnn))=\HH_2(B\otimes_A\colimn F_n,k(\nnn),k(\nnn))= \]
\[ \colimn \HH_2(B\otimes_A F_n,k(\nnn),k(\nnn))=  \colimn \HH_2(B\otimes_A F_n,k(\nnn_n),k(\nnn))= \]
\[ \colimn \HH_2(B\otimes_A F_n,k(\nnn_n),k(\nnn_n)\otimes_{k(\nnn_n)}k(\nnn))= \]
\[\colimn \HH_2(B\otimes_A F_n,k(\nnn_n),k(\nnn_n))\otimes_{k(\nnn_n)}k(\nnn)=0 \]
(we have used \cite[5.30, 5.1 and 3.20]{An1974}). This proves (iv) $\Rightarrow$ (iii) and (iii) $\Rightarrow$ (iv) is trivial.

(i) $\Rightarrow$ (iii) Let $\ppp$ be a prime ideal of $A$ and $F|k(\ppp)$ a field extension. Let $\nnn$ be a prime ideal of $B\otimes_AF$ and $\qqq$ its contraction in $B$. Then the contraction of $\qqq$ in $A$ contains $\ppp$. We have
\[ \HH_1(F,B\otimes_AF,k(\nnn))=\HH_1(A,B,k(\nnn))=\HH_1(A,B,k(\qqq))\otimes_{k(\qqq)}k(\nnn)=0 \]
(the first isomorphism  exists since $f$ is flat). Then the Jacobi-Zariski exact sequence
\[  0=\HH_2(F,k(\nnn),k(\nnn)) \to \HH_2(B\otimes_AF,k(\nnn),k(\nnn)) \to \HH_1(F,B\otimes_AF,k(\nnn))=0 \]
gives
\[ \HH_2((B\otimes_AF)_\nnn,k(\nnn),k(\nnn))=\HH_2(B\otimes_AF,k(\nnn),k(\nnn))=0. \]

(iii) $\Rightarrow$ (i) Let $\qqq$ be a prime ideal of $B$ and $\ppp$ its contraction in $A$. Assume first that the characteristic of $k(\ppp)$ is 0. Let $l$ be the residue field of $B_\qqq\otimes_Ak(\ppp)$. By hypothesis, $\HH_2(B\otimes_Ak(\ppp),l,l)=0$ and then from the Jacobi-Zariski exact sequence
\[ \HH_2(B\otimes_Ak(\ppp),l,l) \to \HH_1(k(\ppp),B\otimes_Ak(\ppp),l) \to \HH_1(k(\ppp),l,l)=0  \]
(the last term vanishes since $l|k(\ppp)$ is separable) we obtain
\[ \HH_1(k(\ppp),B\otimes_Ak(\ppp),l)=0.   \]
Then
\[ \HH_1(A,B,k(\qqq))\otimes_{k(\qqq)}l=\HH_1(A,B,l)=\HH_1(k(\ppp),B\otimes_Ak(\ppp),l)=0, \]
and therefore
\[  \HH_1(A,B,k(\qqq))=0. \]
Suppose now that the characteristic of $k(\ppp)$ is $p>0$. By \cite[7.26]{An1974} we have
\[ \HH_1(k(\ppp),B_\qqq\otimes_Ak(\ppp),l)= \HH_2(B_\qqq\otimes_Ak(\ppp)\otimes_{k(\ppp)}{^\phi k(\ppp)},l,l)=  \]
\[ \HH_2(B\otimes_A{^\phi k(\ppp)},l,l)=0  \]
(the superscript $^\phi$ is as in Proposition \ref{precision}) where $l$ is the residue field of the local ring $B_\qqq\otimes_A{^\phi k(\ppp)}$ (note that $B_\qqq\otimes_A{^\phi k(\ppp)}=(B_\qqq\otimes_Ak(\ppp))\otimes_{k(\ppp)}{^\phi k(\ppp)}$ is a local ring and $B_\qqq \to B_\qqq\otimes_A{^\phi k(\ppp)}$ a local homomorphism by \cite[7.18]{An1974}).

We have then
\[ \HH_1(k(\ppp),B\otimes_Ak(\ppp),k(\qqq))\otimes_{k(\qqq)}l= \]
\[ \HH_1(k(\ppp),B\otimes_Ak(\ppp),l)=\HH_1(k(\ppp),B_\qqq\otimes_Ak(\ppp),l)=0 \]
and then
\[\HH_1(A,B,k(\qqq))=\HH_1(k(\ppp),B\otimes_Ak(\ppp),k(\qqq))=0. \]
\end{proof}

\begin{rem}\label{usual.regular}
Let $f:A\to B$ be a ring homomorphism. If $B$ is noetherian, by Corollary \ref{noetheriansmooth}, $f$ is regular if and only if it is discretely regular, and by Corollary \ref{noetherian.regular} and Proposition \ref{prop.reg.hom}(iv) the usual definition of regular homomorphism \cite[IV.6.8.1]{EGAIV2} agrees with these definitions.
\end{rem}

\begin{prop}\label{48}
Let $f:A\to B$ be a regular homomorphism. Then for any prime ideal $\ppp$ of $A$, any field extension $F|k(\ppp)$, and any prime ideal $\nnn$ of $B\otimes_AF$, the local ring $(B\otimes_AF)_\nnn$ is formally regular with the $\nnn$-adic topology.
\end{prop}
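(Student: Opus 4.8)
The plan is to reduce the statement to the case where the ring at the bottom is a field, where formal regularity is automatic, and then to apply Corollary \ref{cor.1.f.smooth.regular}(i). Set $\qqq=\nnn\cap B$ and $\ppp=\qqq\cap A$. Since $F$ is a $k(\ppp)$-algebra, every element of $A\setminus\ppp$ is invertible in $B\otimes_A F$ while $\ppp$ maps to zero, so $\ppp$ is the image of $\nnn$ in $\mathrm{Spec}\,A$; moreover the multiplicative set $B\setminus\qqq$ misses $\nnn$, so $(B\otimes_A F)_\nnn=(B_\qqq\otimes_A F)_\nnn$. Because $f$ is regular, $B_\qqq$ is formally smooth over $A$ (discrete) for the ideal $\qqq B_\qqq$ with the $\qqq B_\qqq$-adic topology (Definition \ref{43}).

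First I would base change along the canonical map $A\to F$. A direct verification with the lifting criterion of Definition \ref{formallysmooth} (restrict a test lifting problem for $D:=B_\qqq\otimes_A F$ along $B_\qqq\to D$, solve it over $A$ using the formal smoothness of $B_\qqq$, then extend the solution $F$-linearly) shows that formal smoothness for an adic topology is stable under base change; hence $D$ is formally smooth over $F$ for the ideal $I:=\qqq D$ with the $I$-adic topology. Now $\nnn$ is a prime ideal of $D$ containing $I$, so by the second assertion of Example \ref{exampleformal} (with $\bbb_1=I\subseteq\bbb_2=\bbb=\nnn$) the ring $D$ is formally smooth over $F$ for the ideal $\nnn$ with the $\nnn$-adic topology; equivalently, by Proposition \ref{formallysmoothideal},
\[ \colimn \HH^1(F,D/\nnn^n,M)=0 \]
for every $D/\nnn$-module $M$. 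Since $D_\nnn/\nnn^n D_\nnn=(D/\nnn^n)_\nnn$ and Andr\'e--Quillen cohomology localizes \cite[5.27]{An1974}, we obtain $\HH^1(F,D_\nnn/\nnn^n D_\nnn,M)=\HH^1(F,D/\nnn^n,M)$ for every module $M$ over $k(\nnn)=D_\nnn/\nnn D_\nnn$, hence $\colimn \HH^1(F,D_\nnn/\nnn^n D_\nnn,M)=0$; by Proposition \ref{formallysmoothideal} again, $(B\otimes_A F)_\nnn=D_\nnn$ is formally smooth over $F$ for its maximal ideal with the adic topology.

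To conclude, $F$ is a field, hence formally regular for the topology of its zero maximal ideal, and $k(\nnn)$ is a field extension of $F$, so $\HH^2(F,k(\nnn),M)=0$ for every $k(\nnn)$-module $M$; therefore Corollary \ref{cor.1.f.smooth.regular}(i), applied to the local homomorphism $F\to(B\otimes_A F)_\nnn$, yields that $(B\otimes_A F)_\nnn$ is formally regular with the $\nnn$-adic topology, which is the desired assertion.

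I expect the delicate part to be the bookkeeping of topologies rather than any single deep input: the hypothesis that $f$ is regular only provides formal smoothness for the $\qqq B_\qqq$-adic topology, not the discrete one (which is why the discrete-topology conclusion of Proposition \ref{prop.reg.hom}(iii) required $f$ to be \emph{discretely} regular), so one must carry the adic topology through the base change $A\to F$, then enlarge the defining ideal from $I$ to $\nnn$ via Example \ref{exampleformal}, and only then localize at $\nnn$. The one step that is not purely formal is the stability of formal smoothness for adic topologies under base change; everything else is a direct application of results already established (Example \ref{exampleformal}, Proposition \ref{formallysmoothideal}, the localization of Andr\'e--Quillen cohomology, and Corollary \ref{cor.1.f.smooth.regular}).
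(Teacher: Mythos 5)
Your proof is correct and follows essentially the same route as the paper's: base change the formal smoothness of $B_\qqq$ along $A\to F$, enlarge the defining ideal from $\qqq B_\qqq\otimes_AF$ to $\nnn$ via Example \ref{exampleformal}, localize at $\nnn$, and conclude formal regularity from $\HH^2(F,l,M)=0$. The only difference is bookkeeping of references: where you verify base-change stability directly from the lifting criterion and handle localization via Proposition \ref{formallysmoothideal} together with \cite[5.27]{An1974}, the paper cites \cite[$0_{\rm IV}$.19.3.5.(iii),(iv)]{EGAIV1} (legitimate by Corollary \ref{Grothendiecksmooth}), and your appeal to Corollary \ref{cor.1.f.smooth.regular}(i) is exactly the Jacobi--Zariski sequence the paper writes out.
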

\begin{proof}
Let $\ppp$, $\nnn$, and $F$ be as above and let $\qqq$ be the contraction of $\nnn$ in $B$. We have that $B_\qqq$ is a formally smooth $A$-algebra for the ideal $\qqq B_\qqq$ with the $\qqq B_\qqq$-adic topology, and so $B_\qqq\otimes_AF$ is a formally smooth $F$-algebra for the ideal $\qqq B_\qqq\otimes_AF+B_\qqq\otimes_A0=\qqq B_\qqq\otimes_AF$ with the $\qqq B_\qqq\otimes_AF$-adic topology (we can use \cite[$0_{\rm IV}$.19.3.5.(iii)]{EGAIV1} since we are dealing with formal smoothness in the sense of this reference by Corollary \ref{Grothendiecksmooth}). Since $\qqq B_\qqq\otimes_AF\subset \nnn$, $B_\qqq\otimes_AF$ is a formally smooth $F$-algebra for the ideal $\nnn$ with the $\nnn$-adic topology (see Example \ref{exampleformal}). By \cite[$0_{\rm IV}$.19.3.5.(iv)]{EGAIV1}, the local $F$-algebra $(B_\qqq\otimes_AF)_\nnn=(B\otimes_AF)_\nnn$ is formally smooth with the $\tilde{\nnn}=\nnn (B\otimes_AF)_\nnn$-adic topology and then
\[  \colimn \HH^1(F, (B\otimes_AF)_\nnn/\tilde{\nnn}^n,M)=0 \]
for any $l$-module $M$, where $l$ is the residue field of the local ring $(B\otimes_AF)_\nnn$. Then, from the Jacobi-Zariski exact sequence
\[ \colimn \HH^1(F, (B\otimes_AF)_\nnn/\tilde{\nnn}^n,M) \to \colimn \HH^2((B\otimes_AF)_\nnn/\tilde{\nnn}^n,l,M)\to \HH^2(F,l,M)=0 \]
we deduce
\[ \colimn \HH^2((B\otimes_AF)_\nnn/\tilde{\nnn}^n,l,M)=0. \]

\end{proof}

\hfill \break
\subsection{Vanishing of Koszul homology}
\hfill \break

\begin{defn}
Let $\locala$ be a local ring. A set of generators $\{a_i\}_{i\in I}$ of an ideal $\aaa$ is called \emph{minimal} if its images in $\aaa/\mmm\aaa$ form a basis of this $k$-vector space. For instance, by Nakayama's lemma, if $\aaa$ has a finite set of generators, it has a minimal set of generators.
\end{defn}

We denote by $\HH^{Kos}_*(\{a_i\}_{i\in I})$ the Koszul homology associated to the subset $\{a_i\}_{i\in I}\subset A$.

\begin{prop}\label{koszul.1}
Let $\locala$ be a local ring, $\aaa$ an ideal of $A$, $\{a_i\}_{i\in I}$ a set of generators of $\aaa$. If $\HH^{Kos}_1(\{a_i\}_{i\in I})=0$ then $\aaa$ is formally regular with the discrete topology and $\{a_i\}_{i\in I}$ is a minimal set of generators of $\aaa$.
\end{prop}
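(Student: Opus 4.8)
The plan is to use the connection between Koszul homology and André–Quillen homology, together with the characterization of formal regularity with the discrete topology given earlier in the excerpt (namely that $\aaa$ is formally regular with the discrete topology iff $\HH_2(A,A/\aaa,M) = 0$ for all $A/\aaa$-modules $M$ and $\aaa/\aaa^2$ is projective over $A/\aaa$). First I would observe that the Koszul complex on the family $\{a_i\}_{i \in I}$ computes the homology of the exterior algebra on a free module $F = A^{(I)}$ with differential determined by $e_i \mapsto a_i$; its first homology $\HH^{Kos}_1(\{a_i\})$ is, by definition, the module of relations among the $a_i$ modulo the Koszul (trivial) relations. Setting $B = A/\aaa$, the presentation $F \to \aaa \to 0$ gives a two-term complex whose homology in degree $1$ is exactly this relation module; so $\HH^{Kos}_1 = 0$ says that $\aaa$ has a presentation by the free module $F$ whose relations are all consequences of the antisymmetry/Koszul relations.

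The key step is to pass from the vanishing of $\HH^{Kos}_1$ to the projectivity of $\aaa/\aaa^2$ over $B$ and the vanishing of $\HH_2(A,B,-)$. I would tensor the exact sequence $0 \to Z \to F \to \aaa \to 0$ (where $Z = \ker(F \to \aaa)$) with $B = A/\aaa$ over $A$ and track the resulting sequence together with $\Tor^A_*(\aaa, B)$ and $\Tor^A_*(B,B)$. From the short exact sequence $0 \to \aaa \to A \to B \to 0$ we get $\Tor^A_{n+1}(B,B) \cong \Tor^A_n(\aaa, B)$ for $n \geq 1$, and $\Tor^A_1(B,B) \cong \aaa/\aaa^2$. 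The hypothesis $\HH^{Kos}_1 = 0$ controls the low-degree part of the Koszul-to-Tor comparison map $\gamma_*: \wedge^*_B \aaa/\aaa^2 \to \Tor^A_*(B,B)$ (as in Definition \ref{exteriormente.regular}): vanishing of $\HH^{Kos}_1$ forces $\gamma_1$ to be an isomorphism, i.e. the natural surjection $F \otimes_A B = B^{(I)} \to \aaa/\aaa^2$ has kernel generated by the ``diagonal'' relations $a_i e_i$, whence one reads off that $\aaa/\aaa^2$ is a quotient of $B^{(I)}$ by a direct summand, hence free, hence projective over $B$. At the same time, the minimality claim is immediate: if the images of $\{a_i\}$ in $\aaa/\mmm\aaa$ were $k$-linearly dependent, one would produce a nontrivial class in $\HH^{Kos}_1$ after reduction mod $\mmm$, contradicting the hypothesis (using that Koszul homology is compatible with base change to $k$ and right-exactness of the relevant functors).

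The remaining point — showing $\HH_2(A,B,M) = 0$ for all $B$-modules $M$ — I would deduce from the surjection $\Tor^A_2(B,M) \to \HH_2(A,B,M)$ recalled in the excerpt just before Theorem \ref{thm.flat.regular}, after establishing $\Tor^A_2(B,M) = 0$, or alternatively directly: with $\aaa/\aaa^2$ projective, $\HH_1(A,B,B) \cong \aaa/\aaa^2$ and the standard spectral sequence / Jacobi–Zariski argument reduces $\HH_2(A,B,-)$ to the cokernel of $\gamma_2$, which $\HH^{Kos}_1 = 0$ kills in the relevant range. I expect the main obstacle to be the bookkeeping in this last step: making precise exactly which piece of the Koszul complex controls $\HH_2(A,B,M)$ as opposed to merely $\HH_2(A,B,B)$, and handling the infinite index set $I$ so that all the direct-sum and base-change manipulations (in particular the isomorphism $\HH_*(A,B,M) \cong \HH_*(A,B,B) \otimes$ something, or the universal-coefficient argument) remain valid; care is needed because projectivity of $\aaa/\aaa^2$ over $B$, not finite generation, is what must carry the argument. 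Once the projectivity of $\aaa/\aaa^2$ is in hand, I would invoke the second Proposition of the ``Formal regularity'' subsection (the one characterizing discrete formal regularity by $\HH_2(A,A/\aaa,M) = 0$ plus projectivity of $\aaa/\aaa^2$) to conclude.
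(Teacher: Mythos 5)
Your identification of the target is correct (reduce to the proposition characterizing discrete formal regularity by $\HH_2(A,A/\aaa,M)=0$ for all $M$ together with projectivity of $\aaa/\aaa^2$), and the parts of your argument concerning $\aaa/\aaa^2$ and minimality do work: $\HH^{Kos}_1=0$ means the relation module $Z=\kerr(F\to\aaa)$ coincides with the submodule generated by the trivial relations $a_ie_j-a_je_i$, which lies in $\aaa F$, so $F/\aaa F\to\aaa/\aaa^2$ is an isomorphism (hence $\aaa/\aaa^2$ is free, not merely ``a quotient of $B^{(I)}$ by a direct summand''), and minimality follows by reducing a putative nontrivial relation modulo $\mmm$. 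The genuine gap is in the remaining and central step, the vanishing of $\HH_2(A,A/\aaa,M)$ for every $A/\aaa$-module $M$. Your first route requires $\Tor^A_2(A/\aaa,M)=0$, which is simply false under the hypothesis: for $A$ regular local of dimension $2$ and $\aaa=\mmm$ generated by a regular sequence one has $\HH^{Kos}_1=0$ but $\Tor^A_2(k,k)\cong\wedge^2(\mmm/\mmm^2)\neq 0$, so the surjection $\Tor^A_2(A/\aaa,M)\to\HH_2(A,A/\aaa,M)$ gives nothing here. Your second route (``reduce $\HH_2(A,B,-)$ to the cokernel of $\gamma_2$'') appeals to an unstated and unverified comparison, works at best for coefficients $M=B$, and you yourself flag that you do not know how to handle arbitrary $M$; as written this is not a proof.

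The missing ingredient, and the entire content of the paper's proof, is the exact sequence of \cite[2.5.1]{MR}, natural in the $A/\aaa$-module $M$:
\[ 0\to\HH_2(A,A/\aaa,M)\to\HH^{Kos}_1(\{a_i\}_{i\in I})\otimes_{A/\aaa}M\to F/\aaa F\otimes_{A/\aaa}M\xrightarrow{\pi(M)}\aaa/\aaa^2\otimes_{A/\aaa}M\to 0. \]
With this in hand everything is immediate: $\HH^{Kos}_1=0$ kills the second term, hence $\HH_2(A,A/\aaa,M)=0$ for all $M$ and $\pi(M)$ is an isomorphism for all $M$; taking $M=A/\aaa$ gives $\aaa/\aaa^2\cong F/\aaa F$ projective, and the characterization of $\pi(k)$ being an isomorphism gives minimality. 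You should either quote this sequence or prove it (it comes from comparing the truncation of the cotangent complex in degrees $\leq 2$ with the presentation $\wedge^2F\to F\to\aaa$); without it, the passage from Koszul homology to $\HH_2(A,A/\aaa,M)$ with arbitrary coefficients is exactly the point your proposal leaves open.
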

\begin{proof}
By \cite[2.5.1]{MR} we have an exact sequence for each $A/\aaa$-module $M$
\[ 0  \to \HH_2(A,A/\aaa,M) \to \HH_1^{Kos}(\{a_i\}_{i\in I})\otimes_{A/\aaa}M \to \]
\[F/\aaa F\otimes_{A/\aaa}M  \xrightarrow{\pi (M)} \aaa/\aaa^2\otimes_{A/\aaa}M  \to 0 \]
where $F$ is a free $A$ module with basis $\{X_i\}_{i\in I}$ and $F\to \aaa$ the homomorphism of $A$-modules sending $X_i$ to $a_i$. From this exact sequence and the fact that $\pi (k)$ is an isomorphism if and only if $\{a_i\}_{i\in I}$ is a minimal set of generators of $\aaa$, we deduce the result.
\end{proof}

A similar proof gives:
\begin{lem}\label{koszul.2}
Let $\locala$ be a local ring, $\aaa$ an ideal of $A$. The following are equivalent:\\
(i) $\aaa$ has a minimal set of generators and $\HH_2(A,A/\aaa,k)=0$.\\
(ii) There exists a set of generators $\{a_i\}_{i\in I}$ of $\aaa$ such that $\HH^{Kos}_1(\{a_i\}_{i\in I})\otimes_{A/\aaa}k=0$.

If (ii) holds, $\{a_i\}_{i\in I}$ is a minimal set of generators of $\aaa$ and for any other minimal set of generators $\{b_j\}_{j\in J}$ of $\aaa$, we have $\HH^{Kos}_1(\{b_j\}_{j\in J})\otimes_{A/\aaa}k=0$.
\end{lem}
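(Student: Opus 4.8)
The proof should closely mirror the proof of Proposition \ref{koszul.1}, using the exact sequence from \cite[2.5.1]{MR} but now tensored with $k$ rather than with an arbitrary module $M$. The plan is to fix, for a given ideal $\aaa$ with generating set $\{a_i\}_{i\in I}$, the free $A$-module $F$ on $\{X_i\}_{i\in I}$ with surjection $F\twoheadrightarrow\aaa$, and to record the four-term exact sequence
\[ 0 \to \HH_2(A,A/\aaa,k) \to \HH_1^{Kos}(\{a_i\}_{i\in I})\otimes_{A/\aaa}k \to F/\mmm F \xrightarrow{\pi(k)} \aaa/\mmm\aaa \to 0, \]
where I have used $\aaa F + \mmm F = \mmm F$ since the generators $a_i$ lie in $\mmm$, and similarly $\aaa/\aaa^2 \otimes_{A/\aaa} k = \aaa/\mmm\aaa$. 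The key observation is the one already used in \ref{koszul.1}: the map $\pi(k)$ sends the basis element $\bar X_i$ to the class of $a_i$ in $\aaa/\mmm\aaa$, so $\pi(k)$ is an isomorphism precisely when $\{a_i\}_{i\in I}$ is a minimal set of generators of $\aaa$ (the images of the $a_i$ form a basis of $\aaa/\mmm\aaa$).

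For (i)$\Rightarrow$(ii): pick a minimal set of generators $\{a_i\}_{i\in I}$, which exists by hypothesis. Then $\pi(k)$ is an isomorphism between two $k$-vector spaces, so its kernel vanishes; the exact sequence then collapses to an isomorphism $\HH_2(A,A/\aaa,k) \cong \HH_1^{Kos}(\{a_i\}_{i\in I})\otimes_{A/\aaa}k$, and the left-hand side is zero by assumption, giving (ii). For (ii)$\Rightarrow$(i): from $\HH_1^{Kos}(\{a_i\}_{i\in I})\otimes_{A/\aaa}k = 0$ the sequence shows both that $\HH_2(A,A/\aaa,k)=0$ and that $\pi(k)$ is injective; since it is always surjective, $\pi(k)$ is an isomorphism, hence $\{a_i\}_{i\in I}$ is minimal — in particular $\aaa$ admits a minimal generating set. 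This simultaneously proves the first sentence of the ``If (ii) holds'' clause.

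For the last assertion — that any \emph{other} minimal generating set $\{b_j\}_{j\in J}$ also satisfies $\HH_1^{Kos}(\{b_j\}_{j\in J})\otimes_{A/\aaa}k = 0$ — I would simply run the exact sequence again with $\{b_j\}_{j\in J}$ in place of $\{a_i\}_{i\in I}$: since $\{b_j\}$ is minimal, the corresponding map $\pi'(k)$ is an isomorphism, so $\HH_1^{Kos}(\{b_j\}_{j\in J})\otimes_{A/\aaa}k \cong \HH_2(A,A/\aaa,k)$, which is zero by (the already-established) condition (i). Note that $\HH_2(A,A/\aaa,k)$ is an invariant of $\aaa$, not of the chosen generators, which is exactly what makes this work.

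The main obstacle is bookkeeping rather than anything deep: one must be careful that when tensoring the sequence of \cite[2.5.1]{MR} down to $k$, the term $F/\aaa F \otimes_{A/\aaa} k$ really is $F/\mmm F$ (a $k$-vector space of dimension $|I|$) and $\aaa/\aaa^2 \otimes_{A/\aaa} k$ really is $\aaa/\mmm\aaa$, and that right-exactness of $\otimes_{A/\aaa}k$ is what preserves exactness at those two right-hand spots while the $\HH_2$ and $\HH_1^{Kos}$ terms get their tensor factor attached directly (the $\HH_2$ term because $\HH_2(A,A/\aaa,-)$ applied to $k$; one should check the sequence of \cite[2.5.1]{MR} is already written with a general coefficient module $M$ so that taking $M=k$ is immediate, which it is, as used in \ref{koszul.1}). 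No genuinely new idea beyond Proposition \ref{koszul.1} is needed.
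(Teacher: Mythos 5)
Your proof is correct and is precisely the ``similar proof'' the paper intends: it specializes the exact sequence of \cite[2.5.1]{MR} from Proposition \ref{koszul.1} to $M=k$, identifies the two right-hand terms as $F/\mmm F$ and $\aaa/\mmm\aaa$, and reads off all three assertions from the fact that $\pi(k)$ is an isomorphism exactly when the generating set is minimal. Nothing is missing.
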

In particular, if $\mmm$ has a minimal set of generators we deduce that $A$ is formally regular with the discrete topology if and only if $\HH^{Kos}_1(\{a_i\}_{i\in I})=0$ for some (and any) minimal set of generators of $\mmm$.

\begin{prop}\label{koszul.3}
Let $A$ be a ring, $\aaa$ an ideal of $A$, $\{a_i\}_{i\in I}$ a set of generators of $\aaa$. If $\HH^{Kos}_n(\{a_i\}_{i\in I})=0$ for all $n>0$ then $\aaa$ is regular in the sense of Definition \ref{exteriormente.regular}.
\end{prop}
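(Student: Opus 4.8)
The plan is to feed the hypothesis into the Koszul complex, much as in the proofs of Proposition \ref{koszul.1} and Theorem \ref{thm.flat.regular}. Let $F$ be the free $A$-module with basis $\{e_i\}_{i\in I}$ and let $K_\bullet=\wedge^\bullet_A F$ be the Koszul complex with differential determined by $de_i=a_i$, so that $\HH_0(K_\bullet)=A/\aaa$ (since $\{a_i\}$ generates $\aaa$) and $\HH_n(K_\bullet)=\HH^{Kos}_n(\{a_i\}_{i\in I})$ for $n>0$. The hypothesis is then exactly that $K_\bullet$ is a resolution of $A/\aaa$ by free $A$-modules. By Definition \ref{exteriormente.regular}, to prove that $\aaa$ is regular I must prove that $\aaa/\aaa^2$ is a projective $A/\aaa$-module and that the canonical graded homomorphism $\gamma_*\colon\wedge^*_{A/\aaa}\aaa/\aaa^2\to\Tor^A_*(A/\aaa,A/\aaa)$ is an isomorphism; both will be read off from this resolution.

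First I would compute $\Tor^A_*(A/\aaa,A/\aaa)=\HH_*(K_\bullet\otimes_A A/\aaa)$. Because $de_i=a_i\in\aaa$, the differential of $K_\bullet\otimes_A A/\aaa$ is zero, so $\Tor^A_n(A/\aaa,A/\aaa)=\wedge^n_{A/\aaa}(F\otimes_A A/\aaa)$ is a free $A/\aaa$-module for every $n$; in particular $\Tor^A_1(A/\aaa,A/\aaa)=F\otimes_A A/\aaa$ is free. On the other hand, for any ideal the exact sequence $0\to\aaa\to A\to A/\aaa\to 0$ yields a canonical isomorphism $\gamma_1\colon\aaa/\aaa^2\xrightarrow{\sim}\Tor^A_1(A/\aaa,A/\aaa)$. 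Comparing the two descriptions of $\Tor^A_1$ through the map $F\otimes_A A/\aaa\to\aaa/\aaa^2$ induced by $e_i\mapsto a_i$ (which is the Koszul differential $K_1\to K_0$ composed with $A\to A/\aaa$), one checks that this map is an isomorphism carrying $\overline{e_i}$ to $\overline{a_i}$. Hence $\aaa/\aaa^2$ is free over $A/\aaa$ with basis $\{\overline{a_i}\}_{i\in I}$, in particular projective, and $\gamma_1(\overline{a_i})=\overline{e_i}$.

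The remaining point, which I expect to be the main obstacle, is to lift this degree-one information to all degrees. Here I would use that $K_\bullet=\wedge^\bullet_A F$ is a commutative differential graded $A$-algebra resolving $A/\aaa$, so that the graded $A/\aaa$-algebra structure it induces on $\Tor^A_*(A/\aaa,A/\aaa)$ is the one on $\HH_*(K_\bullet\otimes_A A/\aaa)=\wedge^*_{A/\aaa}(F\otimes_A A/\aaa)$, namely the exterior algebra on the free module $\Tor^A_1(A/\aaa,A/\aaa)$. Since $\gamma_*$ is, by its construction, the graded-algebra homomorphism extending $\gamma_1$, it is then identified with $\wedge^*_{A/\aaa}(\gamma_1)$ and so is an isomorphism; concretely, $\gamma_n$ sends the basis $\{\overline{a_{i_1}}\wedge\cdots\wedge\overline{a_{i_n}}\}$ of $\wedge^n_{A/\aaa}\aaa/\aaa^2$ bijectively onto the basis $\{\overline{e_{i_1}}\wedge\cdots\wedge\overline{e_{i_n}}\}$ of $\Tor^A_n(A/\aaa,A/\aaa)$. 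Therefore $\aaa$ is regular. The delicate ingredient is the compatibility of the multiplication on $\Tor$ coming from the Koszul resolution with the canonical Tor-product; should one wish to avoid invoking it, one can instead get injectivity of each $\gamma_n$ from \cite[Lemma 2.1]{Pla1} (as in the proof of Theorem \ref{thm.flat.regular}), using that $\aaa/\aaa^2$ is flat, and surjectivity from the fact that $\wedge^*_{A/\aaa}(F\otimes_A A/\aaa)$ is generated in degree $1$ by the image of $\gamma_1$.
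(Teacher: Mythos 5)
Your argument is correct, but it takes a genuinely different route from the paper's. You observe that the hypothesis makes the Koszul complex $\wedge^\bullet_A F$ a free resolution of $A/\aaa$, compute $\Tor^A_*(A/\aaa,A/\aaa)=\wedge^*_{A/\aaa}(F/\aaa F)$ directly (the differential dies mod $\aaa$), identify $\aaa/\aaa^2$ with the free module $F/\aaa F$ via the comparison of resolutions (here you implicitly use $\HH^{Kos}_1=0$ again, to know that the image of $\wedge^2F\otimes A/\aaa$ in $F\otimes A/\aaa$ coincides with that of the first syzygy), and then verify the definition of regularity head-on: $\aaa/\aaa^2$ is free and $\gamma_*=\wedge^*(\gamma_1)$ is an isomorphism because the Tor product, computed from the Koszul DG algebra resolution, is the exterior product. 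The paper instead notes that the hypothesis trivially puts you in the situation of \cite{Ro3}, \cite[Corollary 3]{BMR2} (the Koszul homology is the exterior algebra on a free $\HH^{Kos}_1$, here zero), which yields $\HH^n(A,A/\aaa,M)=0$ for $n\geq 3$, and then kills $\HH^2(A,A/\aaa,M)$ by a short computation with the truncated cotangent complex of \cite[1.1]{MR}; regularity then follows from the cohomological characterization recalled in Definition \ref{exteriormente.regular}. Your proof is more elementary and self-contained (and shows slightly more, namely that $\aaa/\aaa^2$ is free rather than merely projective), at the cost of invoking the standard compatibility of the product on $\Tor$ with DG algebra resolutions, which you rightly flag as the delicate point; the paper's proof outsources the multiplicative bookkeeping to the cited acyclicity theorem for Tate complexes, which is built for the more general case where $\HH^{Kos}_1$ is free but nonzero. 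One small caveat on your fallback: the injectivity of $\gamma_n$ via \cite[Lemma 2.1]{Pla1} genuinely avoids the product-compatibility issue, but your surjectivity argument (generation in degree one) does not, since it still needs the canonical product on $\Tor_*$ to agree with the exterior product read off from the Koszul resolution.
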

\begin{proof}
Since $\HH^{Kos}_n(\{a_i\}_{i\in I})=0$ for all $n>0$, the module $\HH^{Kos}_1(\{a_i\}_{i\in I})$ is free and the canonical homomorphism
\[  \wedge^*_{A/\aaa}\HH^{Kos}_1(\{a_i\}_{i\in I}) \to \HH^{Kos}_*(\{a_i\}_{i\in I})    \]
is an isomorphism. Then, by \cite{Ro3} \cite[Corollary 3]{BMR2} we have
\[ \HH^n(A,A/\aaa,M)=0   \]
for all $n\geq 3$ and any $A/\aaa$-module $M$. Applying $\HHom_{A/\aaa}(-,M)$ to the complex
\[ U/U_0\to F/\aaa F\to \Omega_{R|A}\otimes_RA/\aaa \]
(notation as in \cite[1.1]{MR}) we obtain an exact sequence
\[ \HHom_{A/\aaa}(F/\aaa F,M) \to \HHom_{A/\aaa}(U/U_0,M) \to \HH^2(A,A/\aaa,M) \to 0, \]
that is, an exact sequence
\[ \HHom_{A/\aaa}(F/\aaa F,M) \to \HHom_{A/\aaa}(\HH^{Kos}_1(\{a_i\}_{i\in I}),M) \to \HH^2(A,A/\aaa,M) \to 0. \]
We deduce
\[  \HH^2(A,A/\aaa,M) = 0, \]
and then
\[ \HH^n(A,A/\aaa,M)=0   \]
for all $n\geq 2$.

\end{proof}

\begin{prop}\label{koszul.4}
Let $\locala$ be a local ring, $\aaa$ an ideal of $A$. If $\aaa$ is regular and has a minimal set of generators, then $\HH^{Kos}_n(\{a_i\}_{i\in I})\otimes_{A/\aaa}k=0$ for any minimal set of generators $\{a_i\}_{i\in I}$ of $\aaa$ and any $n>0$.
\end{prop}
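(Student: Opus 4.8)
Write $B=A/\aaa$, so $k=B/\mmm B$ is the residue field and, since every $a_j$ acts nullhomotopically on the Koszul complex, $\HH^{Kos}_*(\{a_i\}_{i\in I})$ is naturally a graded $B$-algebra. The plan is to reduce the statement to two ingredients: (a) the vanishing of $\HH^{Kos}_1\otimes_B k$, and (b) the fact that the graded $B$-algebra $\HH^{Kos}_*(\{a_i\}_{i\in I})$ is generated by its part of degree $\le 1$. As for the hypotheses, being regular the ideal $\aaa$ satisfies $\HH^n(A,B,M)=0$ for all $n\ge 2$ and all $B$-modules $M$ (Definition \ref{exteriormente.regular}), hence also $\HH_n(A,B,M)=0$ for all $n\ge 2$ and all $B$-modules $M$ (as $\aaa$ is then quasi-regular). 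In particular $\HH_2(A,B,k)=0$ and $\HH^n(A,B,M)=0$ for $n\ge 3$; these are the inputs for (a) and (b) respectively.

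First I would settle the case $n=1$. Since $\aaa$ admits a minimal set of generators and $\HH_2(A,B,k)=0$, Lemma \ref{koszul.2} yields a generating set $\{a_i\}$ with $\HH^{Kos}_1(\{a_i\})\otimes_B k=0$, and its final assertion upgrades this to \emph{every} minimal set of generators, giving $\HH^{Kos}_1(\{a_i\}_{i\in I})\otimes_B k=0$ for any such set. (Alternatively, one reads this off the exact sequence \cite[2.5.1]{MR} used in the proof of Proposition \ref{koszul.1}: for $M=k$ and $\{a_i\}_{i\in I}$ minimal, $\pi(k)$ is an isomorphism, so the sequence collapses to $\HH^{Kos}_1(\{a_i\})\otimes_B k\cong \HH_2(A,B,k)=0$.) Next, for the multiplicative input, recall that as $\HH^{Kos}_*(\{a_i\}_{i\in I})$ is graded-commutative over $B$ there is a canonical homomorphism of graded $B$-algebras $\lambda\colon \wedge^*_B\HH^{Kos}_1(\{a_i\}_{i\in I})\to \HH^{Kos}_*(\{a_i\}_{i\in I})$ (the one appearing in the proof of Proposition \ref{koszul.3}). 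Because $\aaa$ is regular we have $\HH^n(A,B,M)=0$ for all $n\ge 3$ and all $B$-modules $M$, and by \cite{Ro3} and \cite[Corollary 3]{BMR2} this forces $\lambda$ to be surjective (in fact an isomorphism, with $\HH^{Kos}_1$ flat over $B$); since the hypothesis does not refer to the chosen generators, this holds for every minimal generating set.

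Granting (a) and (b), the proof concludes at once: for $n\ge 1$, applying the right-exact functor $-\otimes_B k$ to the surjection $\wedge^n_B\HH^{Kos}_1(\{a_i\}_{i\in I})\to \HH^{Kos}_n(\{a_i\}_{i\in I})$ and using that the $n$-th exterior power commutes with the base change $B\to k$, we obtain a surjection
\[ \wedge^n_k\!\big(\HH^{Kos}_1(\{a_i\}_{i\in I})\otimes_B k\big)\longrightarrow \HH^{Kos}_n(\{a_i\}_{i\in I})\otimes_B k ; \]
by the case $n=1$ the left-hand side is $\wedge^n_k(0)=0$, whence $\HH^{Kos}_n(\{a_i\}_{i\in I})\otimes_B k=0$. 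The step I expect to be the real obstacle is the invocation of \cite{Ro3} and \cite[Corollary 3]{BMR2}: one needs the implication opposite to the one exploited in the proof of Proposition \ref{koszul.3}, namely that the vanishing of $\HH^{\ge 3}(A,B,-)$ forces $\HH^{Kos}_*(\{a_i\}_{i\in I})$ to be the exterior algebra on $\HH^{Kos}_1(\{a_i\}_{i\in I})$; note that this structural input is genuinely needed, since in the non-noetherian setting $\HH^{Kos}_1(\{a_i\}_{i\in I})$ can be a nonzero $B$-module even though it dies modulo $\mmm B$, so one cannot simply conclude $\HH^{Kos}_n(\{a_i\}_{i\in I})=0$.
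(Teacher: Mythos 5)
Your argument is correct and takes essentially the same route as the paper: both proofs obtain $\HH^{Kos}_1(\{a_i\}_{i\in I})\otimes_{A/\aaa}k=0$ from Lemma \ref{koszul.2} and invoke \cite{Ro3}, \cite[Corollary 3]{BMR2} to identify $\HH^{Kos}_*(\{a_i\}_{i\in I})$ with the exterior algebra on $\HH^{Kos}_1(\{a_i\}_{i\in I})$, then conclude by right-exactness of $-\otimes_{A/\aaa}k$ and compatibility of exterior powers with base change. The step you flag as the likely obstacle is exactly the implication the paper draws from those same references, so there is no gap.
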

\begin{proof}
By \cite{Ro3} \cite[Corollary 3]{BMR2} the canonical homomorphism
\[  \wedge^*_{A/\aaa}\HH^{Kos}_1(\{a_i\}_{i\in I}) \to \HH^{Kos}_*(\{a_i\}_{i\in I})  \]
is an isomorphism for any set of generators $\{a_i\}_{i\in I}$ of $\aaa$. Taking $\{a_i\}_{i\in I}$ minimal, Lemma \ref{koszul.2} says that $\HH^{Kos}_1(\{a_i\}_{i\in I})\otimes_{A/\aaa}k=0$, and then $\HH^{Kos}_n(\{a_i\}_{i\in I})\otimes_{A/\aaa}k=0$ for all $n>0$.
\end{proof}

\begin{rem}
(i) We do not know if Proposition \ref{koszul.4} holds replacing the hypothesis ``$\aaa$ is regular'' by ``$\aaa$ is quasi-regular'', though if $A$ contains a field, using \cite[Erratum]{Ro3}, the same proof works.\\
(ii) In \cite[Example 1]{Kab} there is an example of an ideal $\aaa$ of a ring $A$ formally regular with the $\aaa$-adic topology and a minimal set of generators $\{a_i\}_{i\in I}$ of $\aaa$ such that $\HH^{Kos}_1(\{a_i\}_{i\in I})\neq 0$. Moreover, in this example $A$ is local and $\aaa$ is its maximal ideal which is finitely generated.
\end{rem}

\section{Descent: raw results}

Let $f:\locala\to \localb$ be a local homomorphism. Denote flat or Tor dimension by fd. The main result in this section is the next one, that will allow us to prove descent results for regularity from $B$ to $A$ when $\mathrm{fd}_A(B)<\infty$:

\begin{thm}\label{main}
If $\Tor^A_n(k,B)=0$ for all $n>>0$ then
\[  \HH_2(A,k,l) \xrightarrow{\alpha} \HH_2(B,l,l)  \]
is injective.
\end{thm}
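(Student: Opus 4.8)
The plan is to compute $\HH_2(A,k,l)$ from a free simplicial resolution of $k$ over $A$, base change it along $f$, and exploit the finite-flat-dimension hypothesis to bound how far the result fails to be a resolution of $l$ over $B$.

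Fix a free simplicial $A$-algebra resolution $P_\bullet\to k$, so that $\HH_n(A,k,M)=\HH_n(\Omega_{P_\bullet|A}\otimes_{P_\bullet}M)$ for every $k$-module $M$. Base changing along $f$ gives a free simplicial $B$-algebra $P_\bullet\otimes_A B$ with $\pi_0=B/\mmm B$ and $\pi_i=\Tor^A_i(k,B)$ for $i>0$, the latter being nonzero for only finitely many $i$ by hypothesis. Since $\Omega_{P_\bullet\otimes_A B|B}\otimes_{P_\bullet\otimes_A B}l=\Omega_{P_\bullet|A}\otimes_{P_\bullet}l$, this simplicial $l$-module already computes $\HH_*(A,k,l)$; it simply is not $\Omega_{Q_\bullet|B}\otimes_{Q_\bullet}l$ for a genuine free simplicial resolution $Q_\bullet\to l$ of $l$ over $B$. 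One builds such a $Q_\bullet$ by enlarging $P_\bullet\otimes_A B$ through free cell attachments: cells in simplicial degree $1$ to pass from $B/\mmm B$ down to $l$, and cells in degrees $\ge2$ to kill the finitely many nonzero groups $\Tor^A_i(k,B)$ together with whatever homotopy the lower attachments create. Read on Andr\'e--Quillen homology with coefficients in $l$, the resulting simplicial map $P_\bullet\otimes_A B\hookrightarrow Q_\bullet$ is precisely $\alpha\colon\HH_*(A,k,l)\to\HH_*(B,l,l)$, so everything reduces to the injectivity of $\alpha$ in degree $2$.

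I would prove this by running the Jacobi--Zariski sequence through the attachments, arranged in increasing degree. For a single attachment $S\to S\langle y\rangle$ the relative term $\HH_n(S,S\langle y\rangle,l)$ is concentrated in degree $\deg y$, so degree-$1$ cells act as isomorphisms on $\HH_2$, degree-$2$ cells can only enlarge it, degree-$3$ cells can shrink it through the connecting map $\HH_3(S,S\langle y\rangle,l)\to\HH_2(B,S,l)$, and cells of degree $\ge4$ are isomorphisms on $\HH_2$ again. Hence once the cells of degree $\le3$ are in place $\HH_2$ has stabilized to $\HH_2(B,l,l)$, and $\alpha$ factors as an injection of $\HH_2(A,k,l)$ into an enlargement $E$ (coming from the degree-$\le2$ cells) followed by a surjection $E\twoheadrightarrow\HH_2(B,l,l)$ (coming from the degree-$3$ cells, which annihilate $\pi_2$). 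The main obstacle --- and the technical heart of the matter --- is to show this composite stays injective, i.e. that no class in the image of $\HH_2(A,k,l)$ is destroyed when $\pi_2$ is killed: equivalently, that the images of the connecting maps attached to the degree-$3$ cells meet the image of $\HH_2(A,k,l)$ only in $0$. The hypothesis, that $\Tor^A_i(k,B)$ vanishes for $i\gg0$, is what confines the base-change defect to finitely many degrees and thus makes this obstruction amenable; carrying the argument out using only Jacobi--Zariski and universal-coefficient tools --- Cohen factorizations being unavailable here, since completions of non-noetherian local rings need not be complete --- is where the real work lies.
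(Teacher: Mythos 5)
Your setup is sound and your bookkeeping of how cell attachments in each degree affect $\HH_2$ is correct, but the proposal stops exactly at the theorem's actual content. By the Jacobi--Zariski sequence for $A\to B\to l$, the injectivity of $\HH_2(A,l,l)\to\HH_2(B,l,l)$ is \emph{equivalent} to the vanishing of $\HH_2(A,B,l)\to\HH_2(A,l,l)$, which is precisely your statement that the connecting maps attached to the degree-$3$ cells meet the image of $\HH_2(A,k,l)$ only in $0$. You name this as ``where the real work lies'' and offer no argument for it; the observation that the hypothesis confines the base-change defect to finitely many degrees does not by itself make the obstruction vanish, and Jacobi--Zariski plus universal coefficients will not close it. So as written this is a plan with the central step missing, not a proof.

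For comparison, here is how the paper closes that gap. One replaces $A$ by a localization $C$ of a polynomial $A$-algebra surjecting onto $B$ (harmless on $\HH_2$), and uses the standard surjection $\Tor_2^C(B,l)\twoheadrightarrow\HH_2(C,B,l)$ to reduce to showing that the image of $\epsilon:\Tor_2^C(B,l)\to\Tor_2^C(l,l)$ consists of decomposable elements, since $\Tor_1^C(l,l)\cdot\Tor_1^C(l,l)$ dies in $\HH_2(C,l,l)$. Decomposability is where the hypothesis enters, via divided powers: writing a class $\beta\in\Tor_2^{A_J}(B,k_J)$ (for a finitely generated sub-polynomial-algebra $A_J$, to which the finiteness of $\Tor$ descends by a change-of-rings spectral sequence) as a cycle $x$ in a simplicial resolution, the vanishing of $\Tor_{2n}^{A_J}(B,k_J)$ for $n\gg0$ forces $[x^{(n)}]=0$, hence $\epsilon[x]^{(n)}=0$; since $\Tor^{A_J}(l,k_J)$ is a Hopf algebra with divided powers and therefore free as a divided power algebra (Sj\"odin), a degree-$2$ element with a vanishing divided power must be decomposable. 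None of this machinery (divided power structures on $\Tor$, the Hopf algebra structure, Sj\"odin's freeness theorem) appears in your proposal, and it is the essential input; without a substitute for it the argument does not go through.
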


When $A$ and $B$ are noetherian the result is well known and due to Avramov, who used it to prove that the localizations of a complete intersection ring are complete intersection rings (see \cite{Avr} for his most general version). A different proof can be seen in \cite{ABM}. When the rings are not noetherian, we will need a different method for the proof.\\

\noindent \textit{Proof of Theorem \ref{main}} Let $A[\{X_i\}_{i\in I}]$ be a polynomial $A$-algebra such that there exists a surjective homomorphism of $A$-algebras $\pi:A[\{X_i\}_{i\in I}]\to B$. Let $C=A[\{X_i\}_{i\in I}]_\qqq$ where $\qqq=\pi^{-1}(\nnn)$, and for each finite subset $J$ of $I$, let $A_J=A[\{X_i\}_{i\in J}]_{\nnn_J}$ where $\nnn_J$ is the inverse image of the ideal $\qqq$. Let $k_J$ be the residue field of $A_J$.

We will see first that $\Tor_n^{A_J}(k_J,B)=0$ for all $n>>0$. In the change of rings spectral sequence
\[ E^2_{pq}=\Tor_p^{A_J\otimes_Ak}(\Tor_q^{A_J}(A_J\otimes_Ak,B),k_J)\Rightarrow \Tor_{p+q}^{A_J}(B,k_J) \]
we have $E^2_{pq}=0$ for all $p>>0$ since $A_J\otimes_Ak$ is a localization of a finite type polynomial $k$-algebra, and so a ring of finite homological dimension. We also have $\Tor_q^{A_J}(A_J\otimes_Ak,B)=\Tor^A_q(k,B)$ and so $E^2_{pq}=0$ for all $q>>0$ by hypothesis. Therefore from the spectral sequence we obtain $\Tor_n^{A_J}(B,k_J)=0$ for all $n>>0$.

Consider the following commutative diagram
\[
\begin{tikzcd}
\Tor^C_2(B,l) \arrow[d, "\epsilon"] \arrow[r, "\gamma"] & \HH_2(C,B,l) \arrow[d, "\zeta"] \\
\Tor^C_2(l,l) \arrow[r, "\eta"] & \HH_2(C,l,l)
\end{tikzcd}
\]
where $\gamma$ (and $\eta$) are surjective \cite[proof of 15.4]{An1974}. We are going to see that $\zeta=0$.

Let $\alpha\in\Tor_2^C(B,l)$. It suffices to show that $\epsilon (\alpha)\in \Tor_1^C(l,l)\cdot \Tor_1^C(l,l)$, since $\eta (\Tor_1^C(l,l)\cdot \Tor_1^C(l,l))=0$ by \cite[18.34]{An1974}. Since $\Tor_2^C(B,l)= \underset{J}{\varinjlim} \; \Tor_2^{A_J}(B,k_J)$, there exists some $J$ such that $\alpha$ is the image of some $\beta\in\Tor_2^{A_J}(B,k_J)$. Expand the above diagram as

\[
\begin{tikzcd}
\beta\in\Tor_2^{A_J}(B,k_J) \arrow[d, "\epsilon_J"] \arrow[r] & \Tor^C_2(B,l) \arrow[d, "\epsilon"] \arrow[r, "\gamma"] & \HH_2(C,B,l) \arrow[d, "\zeta"] \\
\Tor_2^{A_J}(l,k_J) \arrow[r] & \Tor^C_2(l,l) \arrow[r, "\eta"] & \HH_2(C,l,l)
\end{tikzcd}
\]
In order to see that $\epsilon (\alpha)\in \Tor_1^C(l,l)\cdot \Tor_1^C(l,l)$, it suffices to see that $\epsilon_J (\beta)\in \Tor_1^{A_J}(l,k_J)\cdot \Tor_1^{A_J}(l,k_J)$.

Let $X$ be a free simplicial resolution of the $A_J$-algebra $k_J$ in the usual sense (\cite[4.30]{An1974}). Let $\beta=[x]\in \Tor^{A_J}_2(B,k_J)$ be represented by a cycle $x\in C(B\otimes_{A_J}X)$ where if $Y$ is a simplicial algebra, $C(Y)$ denotes the DG algebra with $C(Y)_n=Y_n$ and differential $d_n=\sum_{i=0}^n (-1)^i\partial_n^i$. Since $\Tor^{A_J}_{2n}(B,k_J)=0$ for $n>>0$, the class of the cycle $x^{(n)}$ vanishes, where $x^{(n)}$ denotes the $n$th divided power of $x$ in the simplicial sense (see for instance \cite[1.34, 1.35]{Pi}). Since $B\otimes_{A_J}X \to l\otimes_{A_J}X$ is a homomorphism of simplicial rings, $\tilde{\epsilon}: C(B\otimes_{A_J}X) \to C(l\otimes_{A_J}X)$ is a homomorphism of DG algebras with divided powers, and then $0=\epsilon[x^{(n)}]=[\tilde{\epsilon}(x)^{(n)}]$. Since the DG algebra $C(l\otimes_{A_J}X)$ comes from a simplicial algebra, the divided power structure on $C(l\otimes_{A_J}X)$ induces a divided power structure on its homology $\HH(l\otimes_{A_J}X)=\Tor^{A_J}(l,k_J)$ (see \cite[1.36]{Pi}) and then $\epsilon[x]^{(n)}=[\tilde{\epsilon}(x)]^{(n)}=[\tilde{\epsilon}(x)^{(n)}]=0$.

We know that $\Tor^{A_J}(l,k_J)=\Tor^{A_J}(k_J,k_J)\otimes_{k_J}l$ is a Hopf $l$-algebra with divided powers \cite[Th\'eor\`eme 31]{An1970}, and then by \cite[Theorem 1.(a)]{Sj} it is a free $l$-algebra with divided powers. Therefore $\epsilon[x]$ must be contained in the decomposable part of $\Tor^{A_J}_2(l,k_J)$, that is, $\epsilon[x]\in \Tor^{A_J}_1(l,k_J)\cdot \Tor^{A_J}_1(l,k_J)$ as we want to prove, and therefore $\zeta=0$.

Now, from the commutative diagram

\[
\begin{tikzcd}
\HH_2(A,B,l) \arrow[d, "\simeq"] \arrow[r] & \HH_2(A,l,l) \arrow[d, "\simeq"] \\
\HH_2(C,B,l) \arrow[r, "0"] & \HH_2(C,l,l)
\end{tikzcd}
\]
we deduce that $\HH_2(A,B,l) \to \HH_2(A,l,l)$ vanishes, and then
\[  \HH_2(A,k,l)=\HH_2(A,l,l)\to \HH_2(B,l,l)  \]
is injective by the Jacobi-Zariski exact sequence. \qed \\

Note that the above proof gives a more precise result when $A\to B$ is surjective:

\begin{thm}\label{second}
Let $\locala$ be a local ring, $\aaa$ an ideal of $A$, $B=A/\aaa$. If $\Tor_{2n}^A(k,B)=0$ for some $n>0$ then the homomorphism
\[  \HH_2(A,k,k) \xrightarrow{\alpha} \HH_2(B,k,k)  \]
is injective.
\end{thm}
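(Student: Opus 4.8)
The plan is to adapt the proof of Theorem \ref{main} to the surjective case, noting that virtually all of the machinery goes through unchanged and in fact simplifies. The key observation is that when $B = A/\aaa$, one may take the polynomial $A$-algebra to be $A$ itself (the empty set of variables), so that $C = A$ and there is no need for the colimit over finite subsets $J$ nor for the change-of-rings spectral sequence that reduced the finiteness hypothesis along the $A_J$. The single hypothesis $\Tor^A_{2n}(k,B) = 0$ for one value of $n>0$ replaces the asymptotic vanishing.

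First I would set $C = A$, $l = k$, and recall the commutative square from the proof of Theorem \ref{main}:
\[
\begin{tikzcd}
\Tor^A_2(B,k) \arrow[d, "\epsilon"] \arrow[r, "\gamma"] & \HH_2(A,B,k) \arrow[d, "\zeta"] \\
\Tor^A_2(k,k) \arrow[r, "\eta"] & \HH_2(A,k,k)
\end{tikzcd}
\]
where $\gamma$ and $\eta$ are surjective. The goal is again $\zeta = 0$, for which it suffices (since $\eta$ kills decomposables by \cite[18.34]{An1974}) to show that the image under $\epsilon$ of any class in $\Tor^A_2(B,k)$ lies in $\Tor^A_1(k,k)\cdot\Tor^A_1(k,k)$. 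Take a free simplicial resolution $X$ of the $A$-algebra $B$ — note the resolution is now of $B$, not of $k$; the relevant homology is $\HH(B\otimes_A X) = \Tor^A(B,k)$ after tensoring appropriately, but more to the point $\HH(k\otimes_A X) = \Tor^A(k,k)$. Represent a class $\beta = [x]$ by a cycle $x$ in the DG algebra $C(B\otimes_A X)$. Since $\Tor^A_{2n}(B,k) = 0$, the $n$th divided power $x^{(n)}$ is a boundary, hence $[\tilde\epsilon(x)^{(n)}] = 0$ in $\Tor^A(k,k)$, and since the divided power structure on $C(k\otimes_A X)$ descends to homology, $[\tilde\epsilon(x)]^{(n)} = 0$, i.e. $\epsilon(\beta)^{(n)} = 0$ in degree $2n$.

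The main point — and the one that needs the structural input — is to deduce from $\epsilon(\beta)^{(n)} = 0$ that $\epsilon(\beta)$ itself is decomposable. Here I would invoke exactly as before that $\Tor^A(k,k)$ is a Hopf $k$-algebra with divided powers \cite[Théorème 31]{An1970}, hence free as a divided-power algebra by \cite[Theorem 1.(a)]{Sj}; in a free divided-power algebra on a graded vector space, a degree-$2$ element $\xi$ with $\xi^{(n)} = 0$ for some $n>0$ must have zero image in the space of indecomposables (the $n$th divided power of a nonzero degree-$2$ generator is a nonzero generator in degree $2n$), so $\xi$ is decomposable. Thus $\epsilon(\beta) \in \Tor^A_1(k,k)\cdot\Tor^A_1(k,k)$, giving $\zeta = 0$. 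Finally, the Jacobi--Zariski exact sequence associated to $A \to B \to k$ together with $\HH_2(A,B,k) \to \HH_2(A,k,k)$ being the composite through $\zeta$ forces $\HH_2(A,k,k) \to \HH_2(B,k,k)$ to be injective, exactly as in the last paragraph of the proof of Theorem \ref{main}.

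I expect the only real subtlety to be the divided-power bookkeeping in the step ``$\xi^{(n)} = 0 \Rightarrow \xi$ decomposable'': one must be careful that $n$ is fixed (not allowed to vary) and that the freeness result genuinely applies in this generality, and one should double-check that the simplicial resolution of $B$ (rather than of $k$) still yields a DG algebra with divided powers mapping compatibly to $C(k\otimes_A X)$ — but this is the same formalism of \cite{Pi} used in Theorem \ref{main}, now applied with $C = A$, so no new difficulty arises. Everything else is a direct specialization of the preceding proof.
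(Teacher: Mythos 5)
Your argument is essentially the paper's own proof, which likewise specializes the proof of Theorem \ref{main} to $C=A$, $l=k$, drops the colimit over the finite subsets $J$ and the change-of-rings spectral sequence, and uses the single vanishing $\Tor^A_{2n}(B,k)=0$ to kill $[x^{(n)}]$ and conclude that $\epsilon[x]$ is decomposable in the free divided-power Hopf algebra $\Tor^A_*(k,k)$, whence $\zeta=0$ and the Jacobi--Zariski sequence for $A\to B\to k$ finishes. One sentence needs correcting: $X$ must be a free simplicial resolution of the $A$-algebra $k$ (exactly as in Theorem \ref{main}, where $X$ resolves $k_J$ over $A_J$), not of $B$; your own identifications $\HH_*(B\otimes_A X)=\Tor^A_*(B,k)$ and $\HH_*(k\otimes_A X)=\Tor^A_*(k,k)$ hold only in that case, and if $X$ resolved $B$ then $\tilde{\epsilon}$ would land in a complex computing $\Tor^A_*(B,k)$, which is not a Hopf algebra, so Sj\"odin's freeness theorem would not apply. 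With that read, the rest --- including your observation that a single fixed $n$ suffices because $v^{(n)}\neq 0$ for every $n>0$ and every nonzero degree-two generator $v$ of a free divided-power algebra --- is correct and agrees with the paper.
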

\begin{proof}
The proof is similar to that of Theorem \ref{main}, using directly the commutative diagram
\[
\begin{tikzcd}
\Tor^A_2(B,k) \arrow[d] \arrow[r, "\gamma"] & \HH_2(A,B,k) \arrow[d, "\zeta"] \\
\Tor^A_2(k,k) \arrow[r] & \HH_2(A,k,k)
\end{tikzcd}
\]
where $\gamma$ is surjective, in order to see that $\zeta =0$.

\end{proof}

We end this section with another proof of Theorem \ref{main} when $A$ is noetherian, since it allows to lessen a little the hypothesis in this case. In the commutative diagram of exact sequences \cite[2.5.1]{MR}
\[
\begin{tikzpicture}[baseline= (a).base]
\node[scale=0.90] (a) at (0,0){
\begin{tikzcd}
0  \arrow[r] & \HH_2(A,k,l) \arrow[d, "\alpha"] \arrow[r] & \HH_1^{Kos}(\{x_i\})\otimes_kl \arrow[d, "\beta"] \arrow[r] & F/\mmm F\otimes_kl  \arrow[d] \arrow[r, "\pi"] & \mmm/\mmm^2\otimes_kl  \arrow[d] \arrow[r] & 0 \\
0  \arrow[r] & \HH_2(B,l,l)  \arrow[r] & \HH_1^{Kos}(\{y_i\}) \arrow[r] & G/\nnn G  \arrow[r] & \nnn/\nnn^2 \arrow[r] & 0
\end{tikzcd}
};
\end{tikzpicture}
\]
(where $\{x_i\}_{i\in I}$ a set of generators of $\mmm$, $\{y_j\}_{j\in J}$ a set of generators of $\nnn$, $F$ (resp. $G$) a free $A$-module (resp. $B$-module) with basis $\{X_i\}_{i\in I}$ (resp. $\{Y_j\}_{j\in J}$), and $F\to \mmm$ (resp. $G \to \nnn$) the obvious surjective homomorphism) we can choose as $\{x_i\}$ a \emph{minimal} set of generators of $\mmm$ when $A$ is noetherian. Then $\pi$ is an ismomorphism, so that if (and only if) $\alpha$ is injective, so is $\beta$. Therefore we can work in the context of Koszul complexes and then an elaboration of the ideas of \cite{Avr} suffices for a proof. We will use some definitions and facts on differential graded (anti-)commutative algebras (DG algebras). We refer to the first sections of \cite{GL} for them.

\begin{lem}\label{s+r}
Let $s\geq 0$, $r\geq 1$ be integers. Let $X$ be a DG algebra such that $\HH_n(X)=0$ for all $n\in [s,s+r]$. Let $X'$ be a DG algebra obtained adjoining (to $X$) $r$ variables of degree 1 to kill cycles (in the sense of \cite[I, \S 2]{GL}). Then $\HH_{s+r}(X')=0$.
\end{lem}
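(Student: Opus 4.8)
The plan is to analyze the effect of adjoining variables of degree $1$ to kill cycles via the standard mapping-cone/long-exact-sequence computation, and then iterate. Recall that adjoining to a DG algebra $X$ a single variable $T$ of degree $1$ with $dT = z$ (where $z \in Z_0(X)$ is a cycle we wish to kill) produces $X\langle T\rangle = X \oplus XT$, and there is a short exact sequence of complexes $0 \to X \to X\langle T\rangle \to X[1] \to 0$ (here $T$ has degree $1$, so $XT \cong X$ shifted up by one), whose connecting homomorphism $\partial\colon \HH_{n}(X[1]) = \HH_{n-1}(X) \to \HH_{n-1}(X)$ is (up to sign) multiplication by the class $[z] \in \HH_0(X)$. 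This gives the long exact sequence
\[
\cdots \to \HH_n(X) \to \HH_n(X\langle T\rangle) \to \HH_{n-1}(X) \xrightarrow{\pm[z]} \HH_{n-1}(X) \to \cdots
\]
For adjoining $r$ variables simultaneously (killing $r$ different cycles $z_1,\dots,z_r$ of degree $0$), I would do this one variable at a time, so it suffices to understand one step and then induct on $r$.

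First I would handle the base case $r = 1$. The hypothesis is $\HH_n(X) = 0$ for all $n \in [s, s+1]$, so in particular $\HH_s(X) = \HH_{s+1}(X) = 0$. From the long exact sequence above with $n = s+1$:
\[
\HH_{s+1}(X) \to \HH_{s+1}(X\langle T\rangle) \to \HH_s(X),
\]
both outer terms vanish, hence $\HH_{s+1}(X\langle T\rangle) = 0$, which is exactly $\HH_{s+r}(X') = 0$ for $r = 1$.

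Next, for the inductive step, suppose the claim holds for $r-1$ (for every $s$). Write $X' = X''\langle T\rangle$ where $X''$ is obtained from $X$ by adjoining $r-1$ variables of degree $1$ to kill cycles and $X'$ adjoins the last one. Here is where I must be slightly careful: the inductive hypothesis applies to $X''$ with the range $[s, s+r-1]$ provided $\HH_n(X) = 0$ for $n$ in that range — which is weaker than what we are given, so that is fine — and it yields $\HH_{s+r-1}(X'') = 0$. But to run the $r=1$ argument on the last variable, I need $\HH_n(X'') = 0$ for $n \in [s', s'+1]$ where $s' = s+r-1$, i.e. I need both $\HH_{s+r-1}(X'') = 0$ and $\HH_{s+r}(X'') = 0$. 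The first follows from the inductive hypothesis applied with range $[s, s+r-1]$. For the second, $\HH_{s+r}(X'') = 0$: this I would get by applying the inductive hypothesis with the shifted starting index $s+1$ and $r-1$ variables, using that $\HH_n(X) = 0$ for $n \in [s+1, s+r] \subseteq [s, s+r]$; this gives $\HH_{(s+1)+(r-1)}(X'') = \HH_{s+r}(X'') = 0$. With both vanishing statements for $X''$ in hand, the $r=1$ case applied to the final adjunction $X' = X''\langle T\rangle$ (with starting index $s+r-1$) yields $\HH_{s+r}(X') = 0$, completing the induction.

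The main obstacle — really the only subtle point — is organizing the induction so that the inductive hypothesis is invoked with the right range of indices at each stage (both the "correct" range $[s, s+r-1]$ and the shifted range $[s+1, s+r]$), and making sure the intermediate algebra $X''$ does not depend on the order in which variables are adjoined in a way that breaks the argument (it does not, since adjoining variables to kill a fixed finite set of $0$-cycles can be done in any order, each step being of the form $Y \mapsto Y\langle T\rangle$). I would also need to quote or briefly verify the identification of the connecting map with multiplication by $[z]$; since the statement only requires the vanishing conclusion $\HH_{s+r}(X') = 0$ and in all the exact sequences used the relevant outer terms are zero, I never actually need the precise form of that connecting map — only the exactness of the Koszul-type extension sequence — which streamlines the write-up considerably.
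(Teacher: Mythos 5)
Your proof is correct and uses the same essential mechanism as the paper: the long exact sequence associated to adjoining a single degree-$1$ variable, which shows the vanishing window shrinks from $[s,s+r]$ to $[s+1,s+r]$ at each step. The paper states this one-step reduction directly and iterates, whereas you package the same iteration as a formal induction on $r$ (invoking the inductive hypothesis with starting indices $s$ and $s+1$); the bookkeeping differs but the argument is the same.
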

\begin{proof}
It is sufficient to show that for the DG algebra $Y=X<S;dS=s>$ obtained adjoining to $X$ one variable of $S$ of degree 1, $\HH_n(Y)=0$ for all $n\in [s+1,s+r]$. This follows from the exact sequence of \cite[p. 19]{GL}
\[  ... \to \HH_{n+1}(X) \xrightarrow{\partial} \HH_{n+1}(X) \to \HH_{n+1}(Y)\to \HH_n(X) \xrightarrow{\partial} \HH_n(X)\to \HH_n(Y) \to ...   \]
\end{proof}

\begin{thm}\label{first}
Let $f:\locala\to \localb$ be a local homomorphism. Assume that $\mmm$ has a minimal set of generators. If one of the following two conditions holds\\
(i) there exists an integer $s$ such that $\Tor_n^A(k,B)=0$ for all $n\geq s$, or\\
(ii) there exist an integer $r\geq 0$, elements $t_1,\dots,t_r \in \nnn$ such that $f(\mmm) B+(t_1,\dots,t_r)=\nnn$ and an integer $s\equiv r \; (2)$ such that $\Tor_n^A(k,B)=0$ for all $n\in [s,s+r]$,

then the homomorphism
\[  \HH_2(A,k,l) \xrightarrow{\alpha} \HH_2(B,l,l)  \]
is injective.
\end{thm}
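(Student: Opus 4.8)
The plan is to argue entirely in the Koszul setting prepared just above the statement. Since $\{x_i\}$ is minimal, $\pi$ is an isomorphism and the diagram of \cite[2.5.1]{MR} identifies $\alpha$ with the Koszul map $\beta\colon\HH_1^{Kos}(\{x_i\})\otimes_kl\to\HH_1^{Kos}(\{y_j\})$ induced by $f$, for any generating set $\{y_j\}$ of $\nnn$; so it is enough to produce one generating set of $\nnn$ for which $\beta$ is injective. In case (ii) I would take $\{y_j\}=\{f(x_i)\}_{i\in I}\cup\{t_1,\dots,t_r\}$. Writing $K^A$ for the Koszul DG algebra of $\{x_i\}$ over $A$, the Koszul DG algebra $K^B$ of $\{y_j\}$ over $B$ is then obtained from $K^A\otimes_AB$ by adjoining $r$ variables $T_\nu$ of degree $1$ with $dT_\nu=t_\nu$, i.e.\ $K^B=(K^A\otimes_AB)\langle T_1,\dots,T_r;\,dT_\nu=t_\nu\rangle$, which is exactly the kind of extension of DG algebras handled by Lemma \ref{s+r}.

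Case (i) then reduces to case (ii). If $\mmm B=\nnn$ this is (ii) with $r=0$ and $s$ enlarged to an even integer. In general, fix elements $t_\nu\in\nnn$ ($\nu\in N$) with $\mmm B+(t_\nu)_{\nu\in N}=\nnn$; an element $\xi\in\ker\beta$ lies in a finite-dimensional subspace of $\HH_1^{Kos}(\{x_i\})\otimes_kl$, and a chain in $K^B$ bounding a representative of $\beta(\xi)$ involves only finitely many $T_\nu$, with indices in a finite set $N_0$. Setting $r=|N_0|$ and picking $s'\ge s$ with $s'\equiv r\pmod2$, hypothesis (i) gives $\Tor^A_n(k,B)=0$ for all $n\in[s',s'+r]$, so the argument for (ii) applied to $\{f(x_i)\}_i\cup\{t_\nu\}_{\nu\in N_0}$ forces $\xi=0$. (When $B$ is noetherian one may instead take $N$ itself finite and skip this step.)

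It remains to prove (ii). Suppose $\xi\ne0$ and $\beta(\xi)=0$; I would represent $\xi$ by Koszul $1$-cycles over $A$ and expand the equation $\beta(\xi)=0$ in $K^B=(K^A\otimes_AB)\langle T_\nu\rangle$, separating the part of the bounding chain involving the $T_\nu$ from the part that does not. Following \cite{Avr}, this recasts the situation as the assertion that a suitable divided power of (the image of) a representing cycle is a boundary in an acyclic closure of $l$ over $B$. The $\Tor$-hypothesis is used precisely to control this: taking an acyclic closure $R$ of $k$ over $A$, one has $\HH_n(R\otimes_AB)=\Tor^A_n(k,B)=0$ for $n\in[s,s+r]$, so Lemma \ref{s+r} applied to $X=R\otimes_AB$ and $X'=X\langle T_1,\dots,T_r;\,dT_\nu=t_\nu\rangle$ gives $\HH_{s+r}(X')=0$, with $s+r$ even since $s\equiv r\pmod2$; continuing the adjunction of variables above $X'$ to an acyclic closure of $l$ over $B$ while tracking this even vanishing, that divided power cannot be a boundary unless it is zero, whereupon the freeness of $\Tor^B(l,l)$ as a divided-power algebra — Sjödin's theorem, used just as in the proof of Theorem \ref{main} — forces $\xi=0$. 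The main obstacle is making this last step rigorous: propagating the single vanishing $\HH_{s+r}(X')=0$ up the tower of variable adjunctions to an acyclic closure of $l$ over $B$ without disturbing homological degree $s+r$, and matching the parity $s\equiv r\pmod2$ with the even (divided-power) part of the homotopy Lie algebra, so that the obstruction to injectivity of $\beta$ is seen to be decomposable and hence vanishes.
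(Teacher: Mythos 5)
Your setup is right and matches the paper's: the reduction via the diagram of \cite[2.5.1]{MR} to injectivity of the Koszul map $\beta$, the identification of $K^B$ as $(K^A\otimes_AB)\langle T_1,\dots,T_r\rangle$ so that Lemma \ref{s+r} applies, and the reduction of case (i) to finitely many extra variables (a bounding chain involves only finitely many $T_\nu$) are all essentially what the paper does. But the heart of case (ii) --- how a single vanishing $\HH_{s+r}=0$ in one even degree forces a contradiction --- is left open in your sketch, and the route you propose for closing it (propagating the vanishing up a tower of variable adjunctions to an acyclic closure of $l$ over $B$ and then invoking Sj\"odin's freeness of $\Tor^B(l,l)$) does not work as stated: Lemma \ref{s+r} only controls adjunction of variables of degree $1$, and adjoining the higher-degree variables needed to reach an acyclic closure of $l$ over $B$ would in general destroy the vanishing in the fixed degree $s+r$; moreover the complex you actually control is not a resolution computing $\Tor^B(l,l)$, so Sj\"odin's theorem (which the paper uses only in Theorem \ref{main}) has nothing to apply to here.

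The paper's mechanism is different and stays inside $Z=(X\otimes_AB)\langle T_1,\dots,T_r;\,dT_j=t_j\rangle$, where $X$ is a \emph{minimal} DG resolution of $k$ over $A$ whose $1$-skeleton is the Koszul complex on a minimal generating set of $\mmm$ and whose $2$-skeleton adjoins variables $V_e$ with the $[v_e]$ a $k$-basis of $\HH_1^{Kos}$. A nontrivial relation $\sum\lambda_e[\varphi(v_e)]=0$ with $\lambda_e\notin\nnn$ lifts to a cycle $W=\sum V_e\otimes\lambda_e-\tilde R\in Z_2$, and every divided power $W^{(h)}$ is a cycle in $Z_{2h}$ containing the basis term $\lambda_e^hV_e^{(h)}$ with $\lambda_e^h\notin\nnn$, hence $W^{(h)}\notin\nnn Z$. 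Minimality of $X$ gives $dX\subset\mmm X$, whence $dZ\subset\nnn Z$; therefore $W^{(h)}$ is not a boundary and $\HH_{2h}(Z)\neq0$ for \emph{every} $h>0$. This collides with $\HH_{s+r}(Z)=0$ (Lemma \ref{s+r} plus the hypothesis) since $s+r$ is even, and in case (i) with $\HH_n(Z)=0$ for all $n\geq s+|J_0|$. It is exactly this ``nonvanishing in all positive even degrees,'' obtained from minimality of $X$ rather than from Sj\"odin, that your proposal is missing.
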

\begin{proof}
Let $\{u_i\}_{i\in I}$ be a minimal set of generators of the ideal $\mmm$ of $A$. Let $X$ be a minimal DG resolution of the $A$-algebra $k$ \cite[1.6.4]{GL} with 1-skeleton $X^1=A<\{U_i\}_{i\in I};dU_i=u_i>$ (the notation is as in the proof of \cite[1.2.3]{GL}), that is, $X^1$ is the Koszul complex associated to the set of generators $\{u_i\}_{i\in I}$ of $\mmm$.

Let $\{f(u_i)\}_{i\in I}\cup \{t_j\}_{j\in J}$ be a set of generators of the ideal $\nnn$ of $B$ (under the hypothesis (ii) we chose $J=\{1,\dots,r\}$). Consider the associated Koszul complex
\[ Y=B<\{U_i\}_{i\in I}\cup \{T_j\}_{j\in J}; dU_i=f(u_i), dT_j=t_j>.  \]
The homomorphism of DG algebras $\varphi :X^1\to Y$ extending $f$ by $\varphi(U_i)=U_i$ induces a homomorphism on the Koszul homology modules
\[  \beta:\HH^{Kos}_1(\{u_i\}_{i\in I};A)\otimes_kl=H_1(X^1)\otimes_kl \to  \HH_1(Y)=\HH^{Kos}_1(\{f(u_i)\}_{i\in I}\cup \{t_j\}_{j\in J};B). \]
By the commutative diagram at the beginning of this section, we have to see that $\beta$ is injective.

Let $X^2=X^1<\{V_e\}_{e\in E};dV_e=v_e>$ be the 2-skeleton of $X$. Since $X$ is minimal, the homology classes $\{[v_e]\}_{e\in E}$ of the cycles $v_e$ form a $k$-basis of $\HH_1(X^1)=\HH^{Kos}_1(\{u_i\}_{i\in I};A)$. Therefore it is enough to show that the set $\{\beta([v_e]\otimes 1)\}_{e\in E} \subset \HH_1(Y)$ is linearly independent.

On the contrary, suppose that there exists a finite non-empty subset $E_0\subset E$ and non-zero elements $\overline{\lambda_e} \in l$ ($e\in E_0$) such that
\[  \sum_{e\in E_0} \overline{\lambda_e} [\varphi(v_e)]=0. \]
Let $\lambda_e\in B-\nnn$ representants of $\overline{\lambda_e}$. We have then
\[  \sum_{e\in E_0} \lambda_e \varphi(v_e)=dR \]
for some $R\in Y_2$.

Under the hypothesis (i), let $J_0\subset J$ finite such that $R\in B<\{U_i\}_{i\in I}\cup \{T_j\}_{j\in J_0}>$, and under the hypothesis (ii), let $J_0= J$. Let
\[  Z=(X\otimes_AB)\otimes_B(B<\{T_j\}_{j\in J_0};  dT_j=t_j>).  \]
Let $\tilde{R}\in Z$ be the image of $R$ in $Z$ by the canonical homomorphism
\[  B<\{U_i\}_{i\in I}\cup \{T_j\}_{j\in J_0}> \to  (X\otimes_AB)<\{T_j\}_{j\in J_0};  dT_j=t_j>=Z, \]
$V$ the image of
\[  \sum_{e\in E_0} V_e\otimes \lambda_e \in X\otimes_AB \]
in $Z$ by the canonical map $X\otimes_AB \to Z$, and $W=V-\tilde{R}\in Z_2$. We have
\[ dW=  \sum_{e\in E_0}\lambda_e \varphi(v_e)-d\tilde{R} =0, \]
and then
\[ dW^{(h)}=(dW)W^{(h-1)}=0 \]
for all $h\geq 0$, where $W^{(h)}$ denotes the $n$th divided power of $W$ (\cite[I, \S\S 7-8]{GL}). Therefore $W^{(h)}$ is a cycle in $Z_{2h}$ for all $h>0$.

Now we will see that $W^{(h)}$ is not a boundary. The set of finite products of elements $U_i$ ($i\in I$), $T_j$ ($j\in J_0$), $V_e^{(q)}$ ($e\in E$, $q>0$) is part of a basis of $Z$ as free $B$-module, and $W^{(h)}$ is a linear combination of these elements, where one of the summands is $\lambda_e^hV_e^{(h)}$ with $\lambda_e^h \notin \nnn$ (\cite[1.7.1]{GL}). Therefore $W^{(h)}\notin \nnn Z$. Since $X$ is minimal, $dX\subset \mmm X$, and then, since $dT_j=t_j\in\nnn$ for all $j$, $dZ\subset \nnn Z$. From this we deduce that $W^{(h)}$ is not a boundary, and in particular $\HH_{2h}(Z)\neq 0$ for all $h>0$.

Finally, we examine cases (i) and (ii) separately:\\
(i) Since $\HH_n(X\otimes_AB)=\Tor_n^A(k,B)=0$ for all $n\geq s$, from Lemma \ref{s+r} we obtain $\HH_n(Z)=0$ for all $n\geq s + |J_0|$, contradicting that $\HH_{2h}(Z)\neq 0$ for all $h>0$.\\
(ii) Since $\HH_n(X\otimes_AB)=0$ for $n\in [s,s+r]$ and $r=|J_0|$, from Lemma \ref{s+r} we obtain $\HH_{s+r}(Z)=0$. Since $r+s$ is even, we arrive to the same contradiction.
\end{proof}

\section{Descent: processed results}

\hfill \break
\subsection{Descent of formal regularity and descent from perfectoid algebras}
\hfill \break

Theorem \ref{main} gives immediately the following one:

\begin{thm}\label{main3}
Let $\locala \to \localb$ be a local homomorphism of local rings such that $\mathrm{fd}_A(B)<\infty$. If $B$ is formally regular for the discrete topology, then so is $A$. \qed
\end{thm}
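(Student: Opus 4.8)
The plan is to deduce Theorem \ref{main3} directly from the injectivity statement in Theorem \ref{main}, combined with the characterization of formal regularity for the discrete topology. Recall that by definition $B$ is formally regular with the discrete topology precisely when $\HH^2(B,l,M)=0$ for all $l$-modules $M$, and by the elementary Hom-Tor lemma proved above (applied to $\mathbb{L}_{l|B}$) this is equivalent to $\HH_2(B,l,l)=0$; similarly $A$ formally regular with the discrete topology is equivalent to $\HH_2(A,k,k)=0$. So the whole statement reduces to: if $\mathrm{fd}_A(B)<\infty$ and $\HH_2(B,l,l)=0$, then $\HH_2(A,k,k)=0$.

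The key step is to invoke Theorem \ref{main}. The hypothesis $\mathrm{fd}_A(B)<\infty$ gives $\Tor^A_n(k,B)=0$ for all $n \gg 0$ (indeed for $n>\mathrm{fd}_A(B)$), so Theorem \ref{main} applies and yields that
\[ \HH_2(A,k,l)\xrightarrow{\alpha}\HH_2(B,l,l) \]
is injective. Since $\HH_2(B,l,l)=0$ by hypothesis, we conclude $\HH_2(A,k,l)=0$. Now I would pass from coefficients in $l$ back to coefficients in $k$: because $B$ is flat over $A$ the map $A\to B$ is local and faithfully flat, so $l$ is faithfully flat over $k$ (both are fields, so this is automatic once $l\neq 0$), and by the base change / universal coefficients behavior of André--Quillen homology we have $\HH_2(A,k,l)=\HH_2(A,k,k)\otimes_k l$. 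Vanishing of the left side therefore forces $\HH_2(A,k,k)=0$.

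Having obtained $\HH_2(A,k,k)=0$, I would then translate back: by Proposition \cite[3.21]{An1974} (or the Hom-Tor lemma above) this gives $\HH^2(A,k,M)=\HHom_k(\HH_2(A,k,k),M)=0$ for every $k$-module $M$, which is exactly the definition of $A$ being formally regular with the discrete topology. This completes the argument.

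The proof is essentially a two-line reduction, so there is no serious obstacle here — all the real work was already done in Theorem \ref{main}. The only point requiring a little care is the change of coefficient field $l\leadsto k$: one must make sure the base-change isomorphism $\HH_2(A,k,l)\cong\HH_2(A,k,k)\otimes_k l$ is legitimate (it is, since $k\to l$ is flat and one is computing homology of the cotangent complex, to which flat base change applies), and that $l$ is nonzero so that faithful flatness lets one cancel the tensor factor. Everything else is bookkeeping with the definitions of formal regularity for the discrete topology.
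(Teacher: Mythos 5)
Your proof is correct and follows exactly the route the paper intends: Theorem \ref{main} gives injectivity of $\HH_2(A,k,l)\to\HH_2(B,l,l)$, formal regularity of $B$ for the discrete topology kills the target, and the identification $\HH_2(A,k,l)\cong\HH_2(A,k,k)\otimes_k l$ with $l\neq 0$ forces $\HH_2(A,k,k)=0$, which (since $\mmm/\mmm^2$ is automatically projective over the field $k$) is equivalent to formal regularity of $A$ for the discrete topology. The only blemish is your remark that ``$B$ is flat over $A$'' --- the hypothesis is merely $\mathrm{fd}_A(B)<\infty$ --- but this is harmless since, as you note, faithful flatness of $l$ over $k$ needs only $l\neq 0$.
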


The same is true when we have local homomorphisms $A\to B\to C$ with $\mathrm{fd}_A(C)<\infty$ and $B$ formally regular for the discrete topology.

Since valuation and perfectoid rings are examples of formally regular rings, a particular case is the following corollary, that when $A$ is noetherian was obtained in \cite[Corollary 4.8]{BIM}.

\begin{cor}\label{corollaryBIM}
Let $\locala$ be a local ring and $B$ a perfectoid $A$-algebra such that $\mmm B\neq B$. If $\Tor_n^A(k,B)=0$ for all $n>>0$ then $A$ is formally regular for the discrete topology.
\end{cor}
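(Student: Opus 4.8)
The plan is to pass to a local homomorphism to which Theorem~\ref{main} applies and then invoke that maximal ideals of perfectoid rings are quasi-regular. Since $\mmm B\neq B$, I would choose a maximal ideal $\nnn$ of $B$ with $\mmm B\subset\nnn$ and set $l:=B/\nnn$; then the composite $k=A/\mmm\to B/\nnn=l$ is an extension of fields and $A\to B_\nnn$ is a local homomorphism of local rings with residue field $l$. Since $B_\nnn$ is flat over $B$ we have $\Tor_n^A(k,B_\nnn)=\Tor_n^A(k,B)\otimes_B B_\nnn=0$ for all $n\gg 0$, so Theorem~\ref{main} applied to $A\to B_\nnn$ shows that the map $\HH_2(A,k,l)\to\HH_2(B_\nnn,l,l)$ is injective.

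Next I would identify the target. Since Andr\'e--Quillen homology localizes \cite[5.27]{An1974}, $\HH_2(B_\nnn,l,l)=\HH_2(B,l,l)$; and as $\nnn$ is a maximal ideal of the perfectoid ring $B$ it is quasi-regular by Corollary~\ref{valuation.and.perfectoid}(ii), so by Definition~\ref{exteriormente.regular} $\HH_n(B,l,l)=0$ for all $n\geq 2$, in particular $\HH_2(B,l,l)=0$. Hence $\HH_2(A,k,l)=0$.

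It remains to descend from $l$ to $k$ and conclude. Because $k$ is a field, the universal coefficient spectral sequence $E^2_{pq}=\Tor_p^k(\HH_q(A,k,k),l)\Rightarrow\HH_{p+q}(A,k,l)$ degenerates, giving $\HH_2(A,k,l)=\HH_2(A,k,k)\otimes_k l$; since $l$ is a nonzero $k$-vector space, hence faithfully flat over $k$, we get $\HH_2(A,k,k)=0$. Arguing as in the discussion in Definition~\ref{exteriormente.regular} — $\mmm/\mmm^2$ is free over the field $k$, so $\HH^1(A,k,-)=\HHom_k(\mmm/\mmm^2,-)$ is exact and \cite[3.21]{An1974} yields $\HH^2(A,k,M)=\HHom_k(\HH_2(A,k,k),M)=0$ for every $k$-module $M$ — we conclude $\colimn\HH^2(A/\mmm_n,k,M)=0$ for the discrete filtration, i.e.\ $A$ is formally regular for the discrete topology by Definition~\ref{defnformallyregular}.

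The substantive content is already packaged in the two inputs (Theorem~\ref{main}, the technical core of the paper, and the quasi-regularity of maximal ideals of perfectoid rings), so the only care needed is the bookkeeping of passing to $B_\nnn$, so that both the eventual vanishing of $\Tor_n^A(k,-)$ and the ``maximal, hence quasi-regular'' property are simultaneously available, together with the harmless faithfully flat descent along $k\to l$. In particular one must appeal to Theorem~\ref{main} directly rather than to Theorem~\ref{main3}, since only the weak hypothesis $\Tor_n^A(k,B)=0$ for $n\gg 0$ is assumed, not $\mathrm{fd}_A(B)<\infty$.
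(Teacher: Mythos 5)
Your proof is correct and follows essentially the same route as the paper's (much terser) argument: pass to a maximal ideal $\nnn\supseteq\mmm B$, apply Theorem~\ref{main} to the local homomorphism $A\to B_\nnn$ (using flatness of $B\to B_\nnn$ to transport the Tor-vanishing), kill the target via quasi-regularity of $\nnn$ from Corollary~\ref{valuation.and.perfectoid}(ii), and descend $\HH_2(A,k,l)=0$ to $\HH_2(A,k,k)=0$ and thence to vanishing of $\HH^2(A,k,M)$ via \cite[3.21]{An1974}. Your closing remark that one must invoke Theorem~\ref{main} rather than Theorem~\ref{main3} (since only eventual Tor-vanishing is assumed) is exactly the point the paper leaves implicit.
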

\begin{proof}
Let $\nnn$ be a maximal ideal of $B$ containing $\mmm B$. Then consider the local homomorphism $A\to B_\nnn$, where $B_\nnn$ is formally regular for the discrete topology by Corollary \ref{valuation.and.perfectoid}.
\end{proof}

When $A\to B$ is surjective, we can say a little more (since $\Tor^A_n(k,k)=0$ for all $n>>0$ implies that $A$ is formally regular for the discrete topology as we will see in Proposition \ref{prop.levin}):

\begin{prop}
Let $\locala$ be a local ring, $B=A/I$ for some ideal $I$ of $A$. If $B$ is perfectoid and $\Tor_n^A(B,k)=0$ for all $n>>0$ then $\Tor_n^A(k,k)=0$ for all $n>>0$.
\end{prop}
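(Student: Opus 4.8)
Write $\nnn := \mmm B$ for the maximal ideal of the local ring $B = A/I$, so that $B/\nnn = k$. The plan is to exploit the hypothesis through the Cartan--Eilenberg change-of-rings spectral sequence attached to $A \to B$,
\[
E^2_{pq} = \Tor_p^B\bigl(k,\Tor_q^A(B,k)\bigr) \;\Longrightarrow\; \Tor_{p+q}^A(k,k),
\]
and to observe that, since $B$ is perfectoid, the residue field $k$ is homologically trivial over $B$, so the $p$-direction of the spectral sequence is concentrated in degree $0$. Combined with the hypothesis, which makes the $q$-direction bounded, this confines $E^2$ to a bounded region and yields $\Tor_n^A(k,k)=0$ for $n\gg 0$.

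The first step is to prove that $\Tor_n^B(k,k) = 0$ for all $n\ge 1$. By Corollary~\ref{valuation.and.perfectoid}(ii) the maximal ideal $\nnn$ of the perfectoid ring $B$ is quasi-regular, so by Definition~\ref{exteriormente.regular} the $k$-module $\nnn/\nnn^2$ is flat (hence free) and the canonical map $\wedge^*_k(\nnn/\nnn^2) \to \Tor_*^B(k,k)$ is an isomorphism. It therefore suffices to show that $\nnn$ is idempotent. Since $B$ is $p$-adically separated we have $p\in\nnn$, and writing $\pi^p = pu$ with $\pi\in B$ and $u$ a unit (as in the definition of a perfectoid ring) we get $\pi\in\nnn$, hence $pB \subseteq \nnn^p \subseteq \nnn^2$. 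Given $x\in\nnn$, surjectivity of the Frobenius on $B/pB$ (part of the definition of a perfectoid ring) furnishes $y\in B$ with $x\equiv y^p \pmod{pB}$; then $y^p\in\nnn$, so $y\in\nnn$ as $\nnn$ is prime, whence $x\in\nnn^p + pB \subseteq \nnn^2$. Thus $\nnn/\nnn^2 = 0$, so $\Tor_n^B(k,k)=0$ for $n\ge 1$, and consequently $\Tor_n^B(k,V)\cong\Tor_n^B(k,k)\otimes_k V = 0$ for $n\ge 1$ and every $k$-vector space $V$.

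Now I would run the spectral sequence. For each $q$ the $A$-module $\Tor_q^A(B,k) = \Tor_q^A(A/I,A/\mmm)$ is killed by $\mmm$ and is therefore a $k$-vector space, and by hypothesis it vanishes for $q > d$, for some $d$. By the previous step $E^2_{pq} = 0$ for all $p\ge 1$, while $E^2_{0q} = k\otimes_B\Tor_q^A(B,k) = \Tor_q^A(B,k) = 0$ for $q > d$. Hence $E^2_{pq} = 0$ whenever $p+q > d$; since the spectral sequence lies in the first quadrant the same holds for $E^\infty_{pq}$, and therefore $\Tor_n^A(k,k) = 0$ for all $n > d$.

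The one delicate point is the idempotency $\nnn = \nnn^2$, equivalently the concentration of $\Tor_*^B(k,k)$ in degree $0$: without it, $\wedge^*_k(\nnn/\nnn^2)$ would in general be nonzero in arbitrarily high degrees, destroying the boundedness of the spectral sequence. All the remaining ingredients --- the $\mmm$-annihilation of $\Tor_q^A(B,k)$, the identification of $E^2_{0q}$, and the quasi-regularity of $\nnn$ via Corollary~\ref{valuation.and.perfectoid}(ii) --- are routine.
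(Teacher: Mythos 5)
Your proof is correct, and its overall skeleton is the same as the paper's: both arguments run the change-of-rings spectral sequence $\Tor_p^B(\Tor_q^A(B,k),k)\Rightarrow \Tor_{p+q}^A(k,k)$, observe that the surjectivity of $A\to B$ makes each $\Tor_q^A(B,k)$ a $k$-vector space so that the $E^2$-page is controlled by $\Tor_*^B(k,k)$, and conclude from the boundedness of both directions. Where you genuinely diverge is in how you bound the $p$-direction. The paper proves $\Tor_n^B(k,k)=0$ for $n\ge 2$ by a second change-of-rings spectral sequence through $B/\mathrm{rad}(p)$, using that $B/\mathrm{rad}(p)$ is perfect (so $\Tor_i^{B/\mathrm{rad}(p)}(k,k)=0$ for $i>0$) and that $\mathrm{rad}(p)$ is a flat ideal. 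You instead show directly from the perfectoid axioms ($\pi^p$ dividing $p$ up to a unit, $\pi$ and $p$ lying in the Jacobson radical, surjectivity of Frobenius on $B/pB$, primality of $\nnn$) that the maximal ideal of a perfectoid local ring is idempotent, and then feed $\nnn/\nnn^2=0$ into the quasi-regularity isomorphism $\wedge^*_k(\nnn/\nnn^2)\xrightarrow{\sim}\Tor_*^B(k,k)$ of Corollary \ref{valuation.and.perfectoid}(ii). This buys you the marginally stronger statement $\Tor_n^B(k,k)=0$ for all $n\ge 1$ (the paper only obtains, and only needs, $n\ge 2$), and it reuses a result already established in the paper rather than invoking the perfectness machinery a second time; the cost is the extra verification that $\nnn=\nnn^2$, which you carry out correctly and which is indeed the one point where the argument could have failed. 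Both routes are sound.
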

\begin{proof}
By \cite[Lemma 3.7]{BIM} $\mathrm{rad}(p)$ is a flat ideal of $B$ and $B/\mathrm{rad}(p)$ is perfect, and so its residue field $k$ is perfect. Therefore by \cite[3.16]{Gras}
\[ \Tor_i^{B/\mathrm{rad}(p)}(k,k)=0 \]
for all $i>0$. The $B/\mathrm{rad}(p)$-module structures on $\Tor_*^B(B/\mathrm{rad}(p),k)$ on the right and left factor agree, since $B\to B/\mathrm{rad}(p)$ is surjective. Then, in the change of rings spectral sequence
\[ \mathrm{E}^2_{pq}=\Tor_p^{B/\mathrm{rad}(p)}(\Tor_q^B(B/\mathrm{rad}(p),k),k) \Rightarrow \Tor^B_{p+q}(k,k)  \]
we have $\mathrm{E}^2_{pq}=0$ for $p>0$ (since $\Tor^B_q(B/\mathrm{rad}(p),k)$ is a direct sum of copies of $k$) and for $q>1$ (since $\mathrm{rad}(p)$ is flat). We deduce
\[ \Tor_n^B(k,k)=0 \]
for all $n\geq 2$.

Now we put $A$ in scene. Consider the change of rings spectral sequence
\[ \tilde{\mathrm{E}}^2_{pq}=\Tor_p^B(\Tor_q^A(B,k),k) \Rightarrow \Tor^A_{p+q}(k,k).  \]
Since $A\to B$ is surjective, the $B$-module structures on $\Tor_*^A(B,k)$ given on the right and left factor agree, and then
\[ \Tor_p^B(\Tor_q^A(B,k),k)\simeq \Tor_p^B(k^{X_q},k) \]
where $k^{X_q}$ is a direct sum of copies of $k$. We deduce $\tilde{\mathrm{E}}^2_{pq}=0$ for all $p\geq 2$ and for all $q>>0$. Thus
\[ \Tor_n^A(k,k)=0 \]
for all $n>> 0$ as we wanted to prove.

\end{proof}

We will give now analogues to Corollary \ref{corollaryBIM} for complete intersections and Gorenstein rings, following \cite{Edin}. We will give the details, though the proofs are essentially the same, once we have the results of section 2. We start with a mixture of the definitions of finite complete intersection flat dimension \cite{SW}, \cite{SSY} and finite upper complete intersection dimension \cite{Tak}.

\begin{defn}\label{def.cidim}
Let $\locala$ be a noetherian local ring, $M$ an $A$-module. We say that $M$ has finite upper complete intersection flat dimension if there exists a local flat homomorphism $A\to A'$ of noetherian local rings with regular closed fibre $A'/\mmm A'$ and a surjective homomorphism $Q\to A'$ whose kernel is generated by a regular sequence, where $Q$ is a noetherian local ring, such that $\mathrm{fd}_Q(A'\otimes_AM)<\infty$.
\end{defn}

\begin{prop}\label{prop.cidim}
Let $(A,\mmm,k)$ be a noetherian local ring, $B$ a perfectoid $A$-algebra such that $\mmm B\neq B$. If $B$ has finite upper complete intersection flat dimension over $A$, then $A$ is a complete intersection.
\end{prop}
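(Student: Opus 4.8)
The plan is to show that the noetherian local ring $Q$ appearing in a quasi-deformation of $A$ witnessing the finiteness of the upper complete intersection flat dimension of $B$ (Definition \ref{def.cidim}) is in fact \emph{regular}; from this the conclusion follows formally, since $A'=Q/(\underline{x})$ with $\underline{x}$ a $Q$-regular sequence, so $Q$ regular forces $A'$ to be a complete intersection, and then $A$ is a complete intersection because the complete intersection property descends along the flat local homomorphism $A\to A'$ (a theorem of Avramov). So fix a quasi-deformation $A\to A'\leftarrow Q$ as in Definition \ref{def.cidim}, with $A'=Q/(\underline{x})$, with $A\to A'$ flat local with regular closed fibre, and with $\mathrm{fd}_Q(A'\otimes_AB)<\infty$.

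First I would pick the right local rings. Since $\mmm B\neq B$, choose a maximal ideal $\nnn_0$ of $B$ with $\mmm B\subseteq\nnn_0$; as $(A'/\mmm A')\otimes_k(B/\nnn_0)\neq 0$, there is a maximal ideal $\nnn$ of $A'\otimes_AB$ lying over the maximal ideal of $A'$ and over $\nnn_0$. Put $R:=(A'\otimes_AB)_{\nnn}$; then $Q\to A'\to R$ is a local homomorphism and $\mathrm{fd}_Q(R)\le\mathrm{fd}_Q(A'\otimes_AB)<\infty$, so $\Tor^Q_n(k_Q,R)=0$ for $n\gg 0$, where $k_Q$ is the residue field of $Q$. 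Theorem \ref{main}, applied to $Q\to R$, then gives that $\HH_2(Q,k_Q,l_R)\to\HH_2(R,l_R,l_R)$ is injective, $l_R$ being the residue field of $R$. Hence everything reduces to proving that $R$ is formally regular for the discrete topology, i.e. $\HH_2(R,l_R,l_R)=0$: granting this, $\HH_2(Q,k_Q,l_R)=0$, and since $\HH_2(Q,k_Q,l_R)=\HH_2(Q,k_Q,k_Q)\otimes_{k_Q}l_R$ we conclude $\HH_2(Q,k_Q,k_Q)=0$, that is, $Q$ is regular.

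To prove that $R$ is formally regular for the discrete topology I would argue by descent along a formally smooth map. Since $B$ is perfectoid and $\nnn_0$ is a maximal ideal, $\nnn_0$ is quasi-regular by Corollary \ref{valuation.and.perfectoid}(ii), so $B_{\nnn_0}$ is formally regular for the discrete topology. The homomorphism $B\to A'\otimes_AB$ is the base change of the flat homomorphism $A\to A'$, hence flat, so $B_{\nnn_0}\to R$ is a flat local homomorphism; by Corollary \ref{cor.1.f.smooth.regular}(i) it then suffices to show that $B_{\nnn_0}\to R$ is formally smooth for the discrete topology.

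The main obstacle is precisely this last formal smoothness. I would first reduce, following \cite{Edin}, to the case in which $A\to A'$ is \emph{discretely regular} in the sense of Definition \ref{43}, i.e. enlarge the given quasi-deformation to one whose closed fibre $A'/\mmm A'$ is geometrically regular over $k$ while keeping $\mathrm{fd}_Q(A'\otimes_AB)<\infty$; this is the delicate point, since inflating residue fields naively either fails to make $A'/\mmm A'$ geometrically regular over $k$ or else destroys the good behaviour of $B$. Once $A\to A'$ is discretely regular, its base change $B\to A'\otimes_AB$ is again discretely regular: by flat base change $\HH_1(B,A'\otimes_AB,k(\nnn'))=\HH_1(A,A',k(\qqq'))\otimes_{k(\qqq')}k(\nnn')=0$ for every prime $\nnn'$ of $A'\otimes_AB$ with contraction $\qqq'$ in $A'$, and Proposition \ref{prop.reg.hom} yields the claim. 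Localizing (and using that $B\to B_{\nnn_0}$ is a localization) shows that $B_{\nnn_0}\to R$ is formally smooth for the ideal $\nnn R$ with the discrete topology, which is exactly what was needed. Assembling the pieces --- $Q$ regular $\Rightarrow$ $A'=Q/(\underline{x})$ a complete intersection $\Rightarrow$ $A$ a complete intersection --- finishes the proof.
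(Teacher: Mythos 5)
Your overall skeleton is the same as the paper's: pick a maximal ideal $\nnn$ of $A'\otimes_AB$ lying over the maximal ideal $\mmm'$ of $A'$ and over a maximal ideal $\nnn_0$ of $B$ containing $\mmm B$, apply Theorem \ref{main} to $Q\to(A'\otimes_AB)_\nnn$ (legitimate, since $\mathrm{fd}_Q(A'\otimes_AB)<\infty$ and $Q\to A'$ is surjective so $k_Q=k'$), deduce $\HH_2(Q,k',l')=0$, hence $Q$ regular, hence $A'$ and then $A$ complete intersections by flat descent. That endgame is correct and matches the paper.

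The gap is in the middle step. Theorem \ref{main} only requires that the composite $\HH_2(Q,k',l')\to\HH_2((A'\otimes_AB)_\nnn,l',l')$ be zero, but you set out to prove the much stronger statement that $(A'\otimes_AB)_\nnn$ is formally regular for the discrete topology, i.e.\ that the \emph{target} vanishes. Your route to this goes through the formal smoothness of $B_{\nnn_0}\to(A'\otimes_AB)_\nnn$ for the discrete topology, which by flat base change amounts to $\HH_1(A,A',k')=0$, i.e.\ to the closed fibre $A'/\mmm A'$ being \emph{geometrically} regular over $k$. Definition \ref{def.cidim} only gives that $A'/\mmm A'$ is regular; when the residue extension is inseparable these differ, and then $(A'\otimes_AB)_\nnn$ need not be formally regular at all (already $k'\otimes_kB$ can fail to be reduced). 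You flag exactly this as ``the delicate point'' and promise a reduction to a quasi-deformation with geometrically regular closed fibre, but you never carry it out, and it is precisely the hard part of your plan. The paper sidesteps the issue: it proves only that $\lambda:\HH_2(A',k',l')\to\HH_2(A'\otimes_AB,l',l')$ is zero, by fitting $\lambda$ into a commutative square in which $\epsilon:\HH_2(A',A'\otimes_Ak,l')\to\HH_2(A',k',l')$ is surjective --- this uses only that the noetherian local ring $A'\otimes_Ak$ is regular, via $\HH_2(A'\otimes_Ak,k',l')=0$ --- while $\lambda\circ\epsilon$ factors through $\HH_2(B,l,l')=0$ by flat base change and Corollary \ref{valuation.and.perfectoid}. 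To repair your argument, either supply the reduction you allude to, or replace your intermediate goal by this weaker vanishing of the composite.
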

\begin{proof}
Let $\nnn$ be a maximal ideal of $B$ containing $\mmm B$, $l=B/\nnn$. Let $A\to A'$, $Q\to A'$ be as in Definition \ref{def.cidim}, such that $\mathrm{fd}_Q(A'\otimes_AB)<\infty$. Let $\mmm'$ be the maximal ideal of $A'$, $k'=A'/\mmm'$ its residue field. Let $\nnn'$ be a maximal ideal of $A'\otimes_AB$ containing the images of $\mmm'$ and $\nnn$ in $A'\otimes_AB$ \cite[I.3.2.7.1.(ii)]{EGAI}, and $l'=(A'\otimes_AB)/\nnn'$.

We have a commutative diagram
\[
\begin{tikzcd}
 & \HH_2(B,l,l') \arrow[d, "\delta"] \\
\HH_2(A',A'\otimes_Ak,l') \arrow[d, "\epsilon"] \arrow[r, "\gamma"] & \HH_2(A'\otimes_AB,A'\otimes_Al,l') \arrow[d] \\
\HH_2(A',k',l') \arrow[r, "\lambda"] & \HH_2(A'\otimes_AB,l',l')
\end{tikzcd}
\]
and an exact sequence
\[  \HH_2(A',A'\otimes_Ak,l') \xrightarrow{\epsilon} \HH_2(A',k',l') \to \HH_2(A'\otimes_Ak,k',l')  \]
where the right term vanishes by \cite[6.26]{An1974}, and so $\epsilon$ is surjective. By flat base change, $\delta$ is an isomorphism and, by Corollary \ref{valuation.and.perfectoid}, $\HH_2(B,l,l')=0$. Therefore $\lambda=0$.

By Theorem \ref{main} the composition map
\[  \HH_2(Q,k',l') \to \HH_2(A',k',l') \xrightarrow{\lambda} \HH_2(A'\otimes_AB,l',l') = \HH_2((A'\otimes_AB)_{\nnn '},l',l') \]
is injective, and then $\HH_2(Q,k',l')=0$. By \cite[6.26]{An1974} $Q$ is regular and then $A'$ is a complete intersection. By flat descent \cite[Corollaire 2]{Avr}, $A$ is a complete intersection.
\end{proof}

\begin{defn}\label{def.gorenstein}
Let $\locala$ be a noetherian local ring, $M$ an $A$-module. We say that $M$ has finite upper Gorenstein flat dimension if there exists a local flat homomorphism $A\to A'$ of noetherian local rings with regular closed fibre $A'/\mmm A'$ and a surjective homomorphism of noetherian local rings $Q\to A'$ such that there exists some $n$ such that $\mathrm{Ext}_Q^n(A',Q)=A'$, $\mathrm{Ext}_Q^i(A',Q)=0$ for all $i\neq n$ and $\mathrm{fd}_Q(A'\otimes_AM)<\infty$.
\end{defn}

\begin{prop}\label{Gorens}
Let $(A,\mmm,k)$ be a noetherian local ring, $B$ a perfectoid $A$-algebra such that $\mmm B\neq B$. If $B$ has finite upper Gorenstein flat dimension over $A$, then $A$ is Gorenstein.
\end{prop}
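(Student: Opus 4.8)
The plan is to run the argument of Proposition \ref{prop.cidim} essentially verbatim, changing only its final step, where the hypothesis on $\kerr(Q\to A')$ is Gorenstein rather than complete-intersection in nature. First I would unwind Definition \ref{def.gorenstein}: choose a flat local homomorphism $A\to A'$ of noetherian local rings with regular closed fibre $A'/\mmm A'$, a surjection $Q\to A'$ of noetherian local rings, and an integer $n$ with $\mathrm{Ext}_Q^n(A',Q)=A'$, $\mathrm{Ext}_Q^i(A',Q)=0$ for $i\neq n$, and $\mathrm{fd}_Q(A'\otimes_AB)<\infty$. Write $\mmm'$ for the maximal ideal of $A'$ and $k'$ for the common residue field of $A'$ and $Q$. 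Pick a maximal ideal $\nnn$ of $B$ with $\mmm B\subset\nnn$ and put $l=B/\nnn$; using \cite[I.3.2.7.1.(ii)]{EGAI}, pick a maximal ideal $\nnn'$ of $A'\otimes_AB$ lying over the images of $\mmm'$ and $\nnn$, and set $l'=(A'\otimes_AB)/\nnn'$, which is an $l$-algebra.

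Next I would reproduce the commutative diagram of Proposition \ref{prop.cidim}, whose bottom row is $\HH_2(A',k',l')\xrightarrow{\lambda}\HH_2(A'\otimes_AB,l',l')$, together with the Jacobi--Zariski sequence $\HH_2(A',A'\otimes_Ak,l')\xrightarrow{\epsilon}\HH_2(A',k',l')\to\HH_2(A'\otimes_Ak,k',l')$. Since the closed fibre $A'\otimes_Ak$ is regular, the last term vanishes by \cite[6.26]{An1974}, so $\epsilon$ is surjective; by flat base change along $A\to A'$ the map $\HH_2(B,l,l')\to\HH_2(A'\otimes_AB,A'\otimes_Al,l')$ is an isomorphism; and $\HH_2(B,l,l')=0$ because the maximal ideal $\nnn$ of the perfectoid ring $B$ is quasi-regular by Corollary \ref{valuation.and.perfectoid}(ii). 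Exactly as in Proposition \ref{prop.cidim}, these three facts force $\lambda=0$. Since $\mathrm{fd}_Q(A'\otimes_AB)<\infty$ gives $\Tor_n^Q(k',(A'\otimes_AB)_{\nnn'})=0$ for all $n>>0$, Theorem \ref{main} applies to the local homomorphism $Q\to(A'\otimes_AB)_{\nnn'}$ and shows that the composition $\HH_2(Q,k',l')\to\HH_2(A',k',l')\xrightarrow{\lambda}\HH_2((A'\otimes_AB)_{\nnn'},l',l')$ is injective. As $\lambda=0$, this yields $\HH_2(Q,k',l')=0$, hence $\HH_2(Q,k',k')=0$, hence $Q$ is regular by \cite[6.26]{An1974}.

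It remains to descend from $Q$ to $A$. The conditions $\mathrm{Ext}_Q^n(A',Q)=A'$ and $\mathrm{Ext}_Q^i(A',Q)=0$ for $i\neq n$ say exactly that $\mathrm{RHom}_Q(A',Q)$ is concentrated in degree $n$ with value $A'$, i.e.\ that $Q\to A'$ is a Gorenstein surjection; since $Q$ is regular, hence Gorenstein, it follows that $A'$ is Gorenstein — either by applying $\mathrm{RHom}_{A'}(k',-)$ to this complex and using the adjunction $\mathrm{RHom}_{A'}(k',\mathrm{RHom}_Q(A',Q))\simeq\mathrm{RHom}_Q(k',Q)$ together with $\mathrm{RHom}_Q(k',Q)\simeq k'$ up to shift (which shows $\mathrm{RHom}_{A'}(k',A')$ is concentrated in one degree with value $k'$), or by the standard fact (cf.\ \cite{Edin}) that such a surjection transfers the Gorenstein property between $Q$ and $A'$. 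Finally, $A\to A'$ is flat and local, hence faithfully flat, and the Gorenstein property descends along faithfully flat homomorphisms, so $A$ is Gorenstein. Thus the argument is routine once Proposition \ref{prop.cidim} is in hand: the perfectoid hypothesis, through Corollary \ref{valuation.and.perfectoid}(ii) and Theorem \ref{main}, already forces $Q$ to be regular, and the only mildly delicate points I expect are the passage from ``$Q$ regular'' to ``$A'$ Gorenstein'' (a short local-duality computation, or a citation) and the faithfully flat descent of Gorensteinness.
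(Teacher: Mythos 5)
Your proposal is correct and follows the paper's proof essentially verbatim: the paper likewise reruns the argument of Proposition \ref{prop.cidim} to get $Q$ regular, then deduces that $A'$ is Gorenstein from the change-of-rings spectral sequence $\mathrm{E}^{pq}_2=\mathrm{Ext}_{A'}^p(k',\mathrm{Ext}_Q^q(A',Q))\Rightarrow\mathrm{Ext}_Q^{p+q}(k',Q)$ — which is exactly your $\mathrm{RHom}$ adjunction — and finishes by flat descent. No gaps.
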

\begin{proof}
Let $A'$, $Q$ be as in Definition \ref{def.gorenstein}. We can see as in the proof of Proposition \ref{prop.cidim} that $Q$ is regular. Then, from the change of rings spectral sequence
\[  \mathrm{E}^{pq}_2= \mathrm{Ext}_{A'}^p(l',\mathrm{Ext}_Q^q(A',Q)) \Rightarrow \mathrm{Ext}_Q^{p+q}(l',Q) \]
where $l'$ is the residue field of $A'$, we deduce that $A'$ is Gorenstein. By flat descent, $A$ is Gorenstein.
\end{proof}

\hfill \break
\subsection{A result of Levin}
\hfill \break

\begin{prop}\label{prop.levin}
Let $\locala$ be a local ring such that $\Tor^A_{2n}(k,k)=0$ for some $n>0$. Then $A$ is formally regular with the discrete topology.
\end{prop}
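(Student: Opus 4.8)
The plan is to obtain this as an immediate consequence of Theorem \ref{second}, applied to the surjection $A\to A/\mmm=k$. First recall what must be proved: by Definition \ref{defnformallyregular}, ``$A$ is formally regular with the discrete topology'' means $\HH^2(A,k,M)=0$ for every $k$-module $M$. Since $\mmm/\mmm^2$ is a $k$-vector space, the functor $\HH^1(A,k,-)=\HHom_k(\mmm/\mmm^2,-)$ is exact, so by \cite[3.21]{An1974} there is a natural isomorphism $\HH^2(A,k,M)=\HHom_k(\HH_2(A,k,k),M)$. Hence it suffices to prove the single vanishing $\HH_2(A,k,k)=0$.

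Now apply Theorem \ref{second} with $\aaa=\mmm$, so that $B=A/\mmm=k$. The hypothesis ``$\Tor^A_{2n}(k,k)=0$ for some $n>0$'' is precisely the hypothesis ``$\Tor^A_{2n}(k,B)=0$ for some $n>0$'' of that theorem, so its conclusion gives that the canonical homomorphism
\[ \HH_2(A,k,k)\xrightarrow{\alpha}\HH_2(k,k,k) \]
is injective. But $\mathbb{L}_{k|k}\simeq 0$, hence $\HH_2(k,k,k)=0$, and therefore $\HH_2(A,k,k)=0$. Combined with the previous paragraph this yields $\HH^2(A,k,M)=0$ for all $k$-modules $M$, i.e.\ $A$ is formally regular with the discrete topology.

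There is no real obstacle once Theorem \ref{second} is in hand; the whole content of the argument is the reduction along $A\to A/\mmm=k$ together with the triviality $\HH_\ast(k,k,k)=0$. The only point needing a word of care is the passage between ``$\HH^2(A,k,-)$ vanishes identically'' and ``$\HH_2(A,k,k)=0$'', which as noted rests on the freeness of $\mmm/\mmm^2$ over $k$ and the duality isomorphism \cite[3.21]{An1974}; alternatively one can invoke directly the earlier proposition characterizing discrete formal regularity of an ideal $\aaa$ by the vanishing of $\HH_2(A,A/\aaa,M)$ for all $A/\aaa$-modules $M$ together with projectivity of $\aaa/\aaa^2$, the latter being automatic when $A/\aaa=k$ is a field.
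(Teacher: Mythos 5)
Your proof is correct and is essentially identical to the paper's: both apply Theorem \ref{second} with $B=A/\mmm=k$ to conclude that $\HH_2(A,k,k)\to\HH_2(k,k,k)=0$ is injective. Your extra paragraph justifying the passage from $\HH_2(A,k,k)=0$ to $\HH^2(A,k,M)=0$ for all $M$ via \cite[3.21]{An1974} and the freeness of $\mmm/\mmm^2$ over $k$ is a valid filling-in of a step the paper leaves implicit.
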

\begin{proof}
By Theorem \ref{second}, the homomorphism
\[  \HH_2(A,k,k)\to \HH_2(k,k,k)=0  \]
is injective.
\end{proof}

\begin{cor}\label{cor.levin}
Let $\locala$ be a local ring such that $\mmm$ has a minimal set of generators $\{x_i\}_{i\in I}$. If $\mathrm{fd}_A(k)<\infty$ then $\HH_1^{Kos}(\{x_i\}_{i\in I})=0$.
\end{cor}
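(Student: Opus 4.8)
The plan is to combine Proposition \ref{prop.levin} with the Koszul reformulation of formal regularity for the discrete topology recorded immediately after Lemma \ref{koszul.2}.

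First I would note that $\mathrm{fd}_A(k)<\infty$ forces $\Tor^A_n(k,k)=0$ for all large $n$; in particular $\Tor^A_{2n}(k,k)=0$ for any $n$ with $2n>\mathrm{fd}_A(k)$, so the hypothesis of Proposition \ref{prop.levin} is satisfied and $A$ is formally regular with the discrete topology (concretely, $\HH_2(A,k,k)=0$, which is what the proof of Proposition \ref{prop.levin} produces through Theorem \ref{second}). Next, since $\mmm$ is assumed to possess a minimal set of generators, I would invoke the equivalence stated just after Lemma \ref{koszul.2}: for such rings, formal regularity with the discrete topology is equivalent to $\HH^{Kos}_1(\{a_i\}_{i\in I})=0$ for some — hence any — minimal set of generators $\{a_i\}_{i\in I}$ of $\mmm$. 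Applying the forward direction to the given minimal set $\{x_i\}_{i\in I}$ yields $\HH^{Kos}_1(\{x_i\}_{i\in I})=0$, which is the claim. (If one prefers to avoid the ``In particular'' sentence, one can instead feed $\HH_2(A,k,k)=0$ and the existence of a minimal generating set directly into condition (i) of Lemma \ref{koszul.2}, whose conclusion (ii), together with its last sentence, gives the vanishing for every minimal generating set.)

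There is no real obstacle here: the statement is essentially a one-line consequence of Proposition \ref{prop.levin} and Lemma \ref{koszul.2}. The only point that deserves care — and it is already built into those two results — is the coefficient bookkeeping. Lemma \ref{koszul.2} is phrased with $\HH^{Kos}_1(\{a_i\}_{i\in I})\otimes_{A/\mmm}k$ and with $\HH_2(A,k,k)$, so in passing to the present corollary one uses that the degree-one Koszul homology of a generating set of $\mmm$ is annihilated by $\mmm$, hence is already a $k$-vector space for which $-\otimes_k k$ is the identity, and that being formally regular with the discrete topology is exactly the condition $\HH_2(A,k,k)=0$ (the $k$-vector space $\mmm/\mmm^2$ being automatically projective over $k$).
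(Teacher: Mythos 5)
Your proof is correct and follows exactly the paper's route: the paper's entire proof is ``It follows from Proposition \ref{prop.levin} and Lemma \ref{koszul.2}'', which is precisely the combination you spell out. The extra care you take (that $\HH^{Kos}_1(\{x_i\}_{i\in I})$ is killed by $\mmm$, so tensoring with $k$ changes nothing, and that formal regularity for the discrete topology amounts to $\HH_2(A,k,k)=0$) is exactly the implicit bookkeeping the paper relies on.
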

\begin{proof}
It follows from Proposition \ref{prop.levin} and Lemma \ref{koszul.2}.
\end{proof}

According to \cite[Remark after Theorem 1.1]{Kab}, G. Levin has proved Corollary \ref{cor.levin} when $\mmm$ is finitely generated. In fact, both results are equivalent, since if $\Tor^A_n(k,k)=0$ for all $n>>0$ and $\mmm$ has a minimal set of generators, then $\mmm$ is finitely generated by \cite[Theorem 1]{Nor}.

\hfill \break
\subsection{A result of Rodicio}
\hfill \break

The following result is due to A. G. Rodicio \cite{Ro2} when $A$ and $B$ are noetherian.
\begin{thm}
Let $f:A\to B$ be a flat homomorphism, $\mu: B\otimes_AB\to B$ the multiplication. If $\mathrm{fd}_{B\otimes_AB}(B)<\infty$ then $f$ is discretely regular.
\end{thm}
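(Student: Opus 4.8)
The plan is to reduce the statement to the criterion of Proposition~\ref{prop.reg.hom}, namely to show that $\HH_1(A,B,k(\qqq))=0$ for every prime ideal $\qqq$ of $B$. Fixing such a $\qqq$, set $l=k(\qqq)$. The idea, following Rodicio's argument in the noetherian case, is to exploit the multiplication map $\mu\colon B\otimes_A B\to B$ together with the flat base change along $f\colon A\to B$, which turns the diagonal into an object we can analyse via the results of Section~2. Concretely, flatness of $f$ gives, by flat base change \cite[4.54]{An1974}, an isomorphism
\[ \HH_n(B,B\otimes_A B,l)=\HH_n(A,B,l) \]
for all $n$, where $B\otimes_A B$ is viewed as a $B$-algebra via the left factor and mapped to $l$ through $\mu$ followed by $B\to l$. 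So it suffices to prove $\HH_1(B,B\otimes_A B,l)=0$.

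Now I would localise: let $\mathfrak{Q}$ be the kernel of $B\otimes_A B\to l$, a prime of $B\otimes_A B$ lying over $\qqq$ on both sides, and set $R=(B\otimes_A B)_{\mathfrak{Q}}$, a local ring with residue field $l$ and a local homomorphism $B_\qqq\to R$ whose composition with $R\to l$ realises the residue field. Since Andr\'e--Quillen homology localises \cite[5.27]{An1974}, $\HH_1(B,B\otimes_A B,l)=\HH_1(B_\qqq,R,l)$. The hypothesis $\mathrm{fd}_{B\otimes_A B}(B)<\infty$ localises to give $\mathrm{fd}_R(l)<\infty$; indeed $B$ as a $B\otimes_A B$-module localises at $\mathfrak{Q}$ to the residue field $l$ of $R$ (because $\mu$ is surjective with kernel generated, after the base change identification, by the "differences" $b\otimes 1-1\otimes b$, which land in $\mathfrak{Q}$), so $\mathrm{fd}_R(l)<\infty$. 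In particular $\Tor^R_n(l,l)=0$ for all $n\gg 0$.

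At this point I would apply the results of Section~2 to the local ring $R$. By Theorem~\ref{main3} (or directly Theorem~\ref{second} applied to $R\to l$, which is onto), $\Tor^R_n(l,l)=0$ for $n\gg 0$ forces $R$ to be formally regular with the discrete topology, i.e.\ $\HH_2(R,l,l)=0$. Then the Jacobi--Zariski exact sequence \cite[5.1]{An1974} for $B_\qqq\to R\to l$ reads
\[ \HH_2(R,l,l)\to \HH_1(B_\qqq,R,l)\to \HH_1(B_\qqq,l,l), \]
so $\HH_1(B_\qqq,R,l)$ injects into $\HH_1(B_\qqq,l,l)$. The remaining task is to kill this last group; here is where I must be a little careful, and this is the step I expect to be the main obstacle. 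One clean way is to use the retraction: the left unit $B_\qqq\to R$ is split by $\mu$ (localised), so $\mathbb{L}_{R|B_\qqq}$ has $\mathbb{L}_{l|B_\qqq}$ as a retract, giving that $\HH_1(B_\qqq,l,l)$ is a direct summand of $\HH_1(R,l,l)\cong \HH_1(B_\qqq,R,l)$ — but that only says the injection above is an isomorphism, not that the groups vanish. So instead I would argue symmetrically: the same computation with the \emph{right} factor gives a second copy of the diagonal structure, and combining the two (as in Rodicio's original argument and in the spirit of the symmetry used in \cite[18.34]{An1974}) one concludes $\HH_1(A,B,l)$ embeds into a group on which the Frobenius-type / multiplication-induced maps act as zero, forcing it to vanish. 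Concretely, I would track the map $\HH_1(A,B,l)\to\HH_2(B\otimes_A B,B,l)$ induced by $\mu$ and show it is simultaneously injective (from $\mathrm{fd}_{B\otimes_A B}(B)<\infty$ via Theorem~\ref{main}, exactly as in the proof there with the roles of $A\to B$ played by $B\to B\otimes_A B\to B$) and zero (being split). That forces $\HH_1(A,B,l)=0$, hence by Proposition~\ref{prop.reg.hom} $f$ is discretely regular.

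The main obstacle, then, is organising the two-sided symmetry so that the "injective and zero" conclusion genuinely applies to $\HH_1(A,B,l)$ itself rather than to an auxiliary relative group; the technical content is precisely to set up the diagram
\[
\begin{tikzcd}
\HH_1(A,B,l)\arrow[r,"\simeq"]\arrow[d] & \HH_1(B,B\otimes_A B,l)\arrow[d]\\
\HH_2(B\otimes_A B,B,l)\arrow[r] & \HH_2(B,B,l)=0
\end{tikzcd}
\]
and to verify, via the finite flat dimension hypothesis and the divided-power arguments of Theorem~\ref{main}, that the left vertical arrow is injective while the composite across the top and down the right is the whole of it — which pins $\HH_1(A,B,l)$ to $0$.
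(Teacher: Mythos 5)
Your overall strategy (reduce to $\HH_1(A,B,k(\qqq))=0$ via Proposition \ref{prop.reg.hom}, identify this with $\HH_1(B,B\otimes_AB,l)$ by flat base change, and bring in the descent results of Section 2 through the split surjection $\mu$) is the right one and matches the paper's route, but two of your intermediate claims are wrong and the final step is not actually carried out. First, the localization claim: as a $B\otimes_AB$-module via $\mu$, $B$ localizes at $\mathfrak{Q}=\mu^{-1}(\qqq)$ to $B_\qqq$, \emph{not} to the residue field $l$. So the hypothesis gives $\mathrm{fd}_R(B_\qqq)<\infty$, not $\mathrm{fd}_R(l)<\infty$, and you cannot conclude $\Tor^R_n(l,l)=0$ for $n\gg 0$. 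Consequently your assertion that $R=(B\otimes_AB)_{\mathfrak{Q}}$ is formally regular, i.e.\ $\HH_2(R,l,l)=0$, is unjustified and is in fact false in general: take $A=B$ any non-regular local ring, so that $B\otimes_AB=B$ and the hypothesis holds trivially with $R=B_\qqq$ arbitrary. What Theorem \ref{second} actually yields, applied to the surjection $R\to B_\qqq$ with $\Tor^R_{2n}(l,B_\qqq)=0$ for some $n$, is only the \emph{injectivity} of $\alpha_2:\HH_2(R,l,l)\to\HH_2(B_\qqq,l,l)$.

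Second, you correctly sense that the concluding step is the crux, but the sketch you give does not close it: the "injective and zero" map is never pinned down, and in your final diagram the composite through $\HH_2(B,B,l)=0$ is vacuously zero, so it carries no information unless the bottom horizontal arrow is already known to be injective --- which is equivalent to what you are trying to prove. The missing mechanism is the five-term Jacobi--Zariski sequence for $B\otimes_AB\to B\to l$,
\[ \HH_3(B\otimes_AB,l,l)\xrightarrow{\alpha_3}\HH_3(B,l,l)\to\HH_2(B\otimes_AB,B,l)\to\HH_2(B\otimes_AB,l,l)\xrightarrow{\alpha_2}\HH_2(B,l,l), \]
in which $\alpha_2$ is injective by Theorem \ref{second} as above, while $\alpha_3$ (indeed every $\alpha_n$) is \emph{surjective} because the composite $B\xrightarrow{\mathrm{id}\otimes 1}B\otimes_AB\xrightarrow{\mu}B$ is the identity. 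These two facts together force $\HH_2(B\otimes_AB,B,l)=0$; the Jacobi--Zariski sequence for $B\to B\otimes_AB\to B$ then gives $\HH_1(B,B\otimes_AB,l)=0$, and flat base change gives $\HH_1(A,B,l)=0$. So the ingredients you list are the right ones, but you apply the finite flat dimension hypothesis to the wrong surjection ($R\to l$ instead of $R\to B_\qqq$) and never exploit the surjectivity of $\alpha_3$, which is what actually kills the relative group.
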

\begin{proof}
Let $\qqq$ be a prime ideal of $B$. By Proposition \ref{prop.reg.hom} we have to show that $\HH_1(A,B,k(\qqq))=0$. Denote $\nnn:=\mu^{-1}(\qqq)$ and $l=k(\qqq)$ (the residue field of $(B\otimes_AB)_\nnn$ and $B_\qqq$). By Theorem \ref{second} the homomorphism
\[ \alpha_2:\HH_2((B\otimes_AB)_\nnn,l,l)\to \HH_2(B_\qqq,l,l) \]
is injective. Since the composition homomorphism $B\xrightarrow{id\otimes 1} B\otimes_AB \xrightarrow{\mu} B$ is the identity map, the homomorphism
\[ \alpha_n:=\HH_n((B\otimes_AB)_\nnn,l,l)=\HH_n(B\otimes_AB,l,l)\to \HH_n(B,l,l)=\HH_n(B_\qqq,l,l) \]
has a section (and in particular it is surjective) for any $n$. Therefore in the Jacobi-Zariski exact sequence
\[ \HH_3(B\otimes_AB,l,l)\xrightarrow{\alpha_3} \HH_3(B,l,l) \to \HH_2(B\otimes_AB,B,l) \to \HH_2(B\otimes_AB,l,l)\xrightarrow{\alpha_2} \HH_2(B,l,l) \]
we have that $\alpha_3$ is surjective and $\alpha_2$ is injective. We deduce
\[\HH_2(B\otimes_AB,B,l)=0. \]
Then, from the Jacobi-Zariski exact sequence associated to $B\to B\otimes_AB \to B$ we obtain
\[\HH_1(B,B\otimes_AB,l)=0, \]
and since $f$ is flat,
\[ \HH_1(A,B,l)=\HH_1(B,B\otimes_AB,l)=0.\]
\end{proof}

\hfill \break
\subsection{A result of Radu, Andr\'e and Dumitrescu}
\hfill \break

Let $B$ be a noetherian local ring containing a field of characteristic $p>0$. A well known theorem of Kunz \cite{Kunz} tell us that $B$ is regular if the Frobenius homomorphism $\phi:B\to B$ is flat, and Rodicio \cite{Ro1} shows that it suffices to check that $\phi$ is of finite flat dimension. We are concerned here with the more general relative case (in order to see how the relative case implies the absolute one see Corollary \ref{absolute.Kunz}):

\begin{ques}
Let $f:A \to B$ be a flat homomorphism of local rings containing a field of characteristic $p>0$. Let $\Phi:{^\phi A}\otimes_AB\to {^\phi B}$, $\Phi(a\otimes b)=f(a)b^p$ be the relative Frobenius homomorphism (notation as in Proposition \ref{precision}). If $\mathrm{fd}_{{^\phi A}\otimes_AB}({^\phi B})<\infty$, is then $f$ regular?
\end{ques}

When $A$ and $B$ are noetherian the answer is in the affirmative. It was proved by N. Radu, M. Andr\'e and T. Dumitrescu (\cite{Rad}, \cite{An1993}, \cite{An1994} and \cite{Du}). We will prove the result in full generality:

\begin{thm}\label{relativekunz}
Let $f:A \to B$ be a flat homomorphism of rings containing a field of characteristic $p>0$. If $\mathrm{fd}_{{^\phi A}\otimes_AB}({^\phi B})<\infty$, then $f$ is discretely regular (and then regular).
\end{thm}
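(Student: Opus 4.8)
The plan is to reduce to the case treated in Proposition \ref{prop.reg.hom}, i.e. to show that $\HH_1(A,B,k(\qqq))=0$ for every prime $\qqq$ of $B$, by mimicking in the non-noetherian setting the argument of Radu--André--Dumitrescu but replacing their flatness/projective-dimension bookkeeping with the Tor-vanishing input of Theorem \ref{main} (in the surjective form, Theorem \ref{second}, applied to the relative Frobenius) together with the precise comparison of André--Quillen homology of $A\to B$ and of the relative Frobenius from Proposition \ref{precision}. Fix $\qqq$, let $\mathfrak{Q}$ be its contraction to ${}^\phi A\otimes_A B$ under $\Phi$, and set $l=k(\qqq)$. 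Since the composite $B\xrightarrow{} {}^\phi A\otimes_A B\xrightarrow{\Phi}{}^\phi B$ is (up to the identification ${}^\phi B\cong B$ as rings) the Frobenius, which on the cotangent complex induces the zero map \cite[Lemme 53]{An1988}, and since $A\to B$ is flat so that $\Tor^A_i({}^\phi A,B)$ measures exactly the failure of $\sigma_i$ to be an isomorphism, the hypothesis $\mathrm{fd}_{{}^\phi A\otimes_A B}({}^\phi B)<\infty$ will let us run the diagram of Proposition \ref{precision}.

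First I would localize: replace $B$ by $B_\qqq$ and ${}^\phi A\otimes_A B$ by its localization at $\mathfrak{Q}$; André--Quillen (co)homology localizes \cite[5.27]{An1974}, and flat dimension only drops under localization, so we may assume $(B,\nnn,l)$ is local and work with the local homomorphism $\Psi\colon ({}^\phi A\otimes_A B)_{\mathfrak{Q}}\to {}^\phi B={}^\phi B_\qqq$. Next, because $\mathrm{fd}_{{}^\phi A\otimes_A B}({}^\phi B)<\infty$ and ${}^\phi B$ is (essentially, via Frobenius) a quotient-type target, I would apply Theorem \ref{second} to obtain that
\[
\HH_2\big(({}^\phi A\otimes_A B)_{\mathfrak{Q}},l,l\big)\longrightarrow \HH_2({}^\phi B,l,l)
\]
is injective — here one uses that $\Tor^{{}^\phi A\otimes_A B}_{2n}({}^\phi B,l)=0$ for large $n$, which follows from the finite flat dimension hypothesis after base change along $({}^\phi A\otimes_A B)_{\mathfrak{Q}}\to {}^\phi B$ combined with the flatness of $A\to B$ to control the fibre. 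Since the Frobenius $B\to {}^\phi B$ factors through $\Psi$ and induces zero on $\mathbb{L}$, the Jacobi--Zariski sequence for $B\to ({}^\phi A\otimes_A B)_{\mathfrak{Q}}\to {}^\phi B$ forces, as in the proof of the result of Rodicio above, the vanishing of $\HH_1(B,({}^\phi A\otimes_A B)_{\mathfrak{Q}},l)$, equivalently (by flatness of $f$ and base change \cite[4.54]{An1974}) $\HH_1(A,B,l)\otimes$ something; then Proposition \ref{precision}(iii)--(iv), applied with the relevant integer, converts the vanishing of $\HH_1({}^\phi A\otimes_A B,{}^\phi B,-)$ into $\HH_1(A,B,k(\qqq))=0$.

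Finally, having $\HH_1(A,B,k(\qqq))=0$ for all primes $\qqq$, Proposition \ref{prop.reg.hom} (equivalence of (i) and (ii)) gives that $f$ is discretely regular, and discretely regular implies regular by Definition \ref{43} and Example \ref{exampleformal}. The main obstacle, I expect, is the bookkeeping in the middle step: making sure the finite-flat-dimension hypothesis on ${}^\phi A\otimes_A B\to {}^\phi B$ really yields the even-degree Tor-vanishing needed to invoke Theorem \ref{second} after the necessary base changes, and matching up the integer $n$ in Proposition \ref{precision} with the degree in which Theorem \ref{second} produces injectivity — in the noetherian case one has rigidity of Tor and of André--Quillen homology to smooth this over, and here one must instead argue directly with the spectral-sequence/Jacobi--Zariski comparisons, being careful that the module of coefficients $l$ (rather than an arbitrary module) suffices, exactly as in Proposition \ref{precision}(iii)--(iv).
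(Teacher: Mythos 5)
Your overall strategy is the same as the paper's: fix a prime $\qqq$, use the finite flat dimension of the relative Frobenius to get injectivity of a map on $\HH_2$ via the descent results of Section 2, exploit that Frobenius kills the cotangent complex, and conclude $\HH_1(A,B,k(\qqq))=0$ so that Proposition \ref{prop.reg.hom} applies. However, the middle of your argument has three concrete problems. First, Theorem \ref{second} requires a \emph{surjective} homomorphism, and the relative Frobenius $\Phi:{^\phi A}\otimes_AB\to{^\phi B}$ is not surjective in general (its image is $f(A)[B^p]$); you must instead apply Theorem \ref{main} to the local homomorphism $({^\phi A}\otimes_AB)_{\ppp}\to{^\phi B}_{\qqq}$ with $\ppp=\Phi^{-1}({^\phi\qqq})$, which needs no surjectivity. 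Second, you chose the wrong Jacobi--Zariski sequence: for $B\to{^\phi A}\otimes_AB\to{^\phi B}$, flat base change along the flat map $A\to B$ identifies $\HH_*(B,{^\phi A}\otimes_AB,-)$ with $\HH_*(A,{^\phi A},-)$, i.e.\ with the Frobenius of $A$, not with $\HH_*(A,B,-)$. The sequence you need is the one for ${^\phi A}\to{^\phi A}\otimes_AB\to{^\phi B}$, where $\HH_*({^\phi A},{^\phi A}\otimes_AB,-)\cong\HH_*(A,B,-)$ because $\Tor^A_i({^\phi A},B)=0$ for $i>0$ by flatness of $f$.

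Third, Proposition \ref{precision}(iii)--(iv) cannot be invoked here: its hypothesis is the vanishing of $\HH_n({^\phi A}\otimes_AB,{^\phi B},M)$ for \emph{all} $M$, which is neither established nor needed. What the injectivity from Theorem \ref{main} actually gives is that the Jacobi--Zariski map $\HH_2({^\phi A}\otimes_AB,{^\phi B},{^\phi k(\qqq)})\to\HH_2({^\phi A}\otimes_AB,{^\phi k(\qqq)},{^\phi k(\qqq)})$ is zero; since the connecting homomorphism into $\HH_1({^\phi A},{^\phi A}\otimes_AB,{^\phi k(\qqq)})$ factors through this map, it also vanishes, so $\HH_1({^\phi A},{^\phi A}\otimes_AB,{^\phi k(\qqq)})\to\HH_1({^\phi A},{^\phi B},{^\phi k(\qqq)})$ is injective. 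Under the flat base change identification this injection is the map $\HH_1(A,B,{^\phi k(\qqq)})\to\HH_1({^\phi A},{^\phi B},{^\phi k(\qqq)})$ induced by the absolute Frobenius homomorphisms, which is zero by \cite[Lemme 53]{An1988}; hence $\HH_1(A,B,{^\phi k(\qqq)})=\HH_1(A,B,k(\qqq))\otimes_{k(\qqq)}{^\phi k(\qqq)}=0$ and so $\HH_1(A,B,k(\qqq))=0$. With these corrections your outline becomes the paper's proof.
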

\begin{proof}
Let $\qqq$ be a prime ideal of $B$ and $\ppp=\Phi^{-1}({^\phi \qqq})$, so that $k(\qqq)$ is the residue field of $B$ at $\qqq$ and ${^\phi k(\qqq)}$ the residue field of ${^\phi B}$ at ${^\phi \qqq}$. By Theorem \ref{main}, the homomorphism
\[ \HH_2({^\phi A}\otimes_AB,{^\phi k(\qqq)},{^\phi k(\qqq)})=\HH_2(({^\phi A}\otimes_AB)_\ppp,{^\phi k(\qqq)},{^\phi k(\qqq)})\to \HH_2({^\phi B}_\qqq,{^\phi k(\qqq)},{^\phi k(\qqq)}) \]
is injective. Therefore the homomorphism in the Jacobi-Zariski exact sequence
\[\HH_2({^\phi A}\otimes_AB,{^\phi B},{^\phi k(\qqq)}) \to \HH_2({^\phi A}\otimes_AB,{^\phi k(\qqq)},{^\phi k(\qqq)})  \]
is zero. Then, from the commutative triangle
\[
\begin{tikzcd}
\HH_2({^\phi A}\otimes_AB,{^\phi B},{^\phi k(\qqq)})  \arrow[r, "0"] \arrow[dr, "\alpha"]  & \HH_2({^\phi A}\otimes_AB,{^\phi k(\qqq)},{^\phi k(\qqq)}) \arrow[d]\\
& \HH_1({^\phi A},{^\phi A}\otimes_AB,{^\phi k(\qqq)})
\end{tikzcd}
\]
we deduce $\alpha=0$, and so from the Jacobi-Zariski exact sequence we deduce that the homomorphism
\[  \HH_1({^\phi A},{^\phi A}\otimes_AB,{^\phi k(\qqq)}) \to \HH_1({^\phi A},{^\phi B},{^\phi k(\qqq)})  \]
is injective. Since $A\to B$ is flat, this homomorphism can be identified to the homomorphism induced by the absolute Frobenius homomorphisms of $A$ and $B$
\[ \HH_1(A,B,{^\phi k(\qqq)}) \to \HH_1({^\phi A},{^\phi B},{^\phi k(\qqq)})  \]
which is the zero homomorphism (\cite[Lemme 53]{An1988}). That is,
\[ \HH_1(A,B,k(\qqq))\otimes_{k(\qqq)}{^\phi k(\qqq)}=\HH_1(A,B,{^\phi k(\qqq)})=0 \]
and then $f$ is discretely regular.
\end{proof}

\begin{rem}
Note that the same proof works if we replace the hypothesis ``$f$ flat'' by ``$\phi_A:A\to A$ flat'', $\phi_A$ being the Frobenius homomorphism of $A$ (if $A$ is noetherian, that is equivalent to the regularity, at every maximal ideal, of $A$).
\end{rem}

Finally, we note that we can extend the original (absolute) Kunz's theorem to the non-noetherian case, since it is the particular case of Theorem \ref{relativekunz} when $A$ is the prime field of $B$:

\begin{cor}\label{absolute.Kunz}
Let $\localb$ be a local ring containing a field of characteristic $p>0$. If $\mathrm{fd}_B({^\phi B})<\infty$ then $B$ is formally regular for the discrete topology.
\end{cor}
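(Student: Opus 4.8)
The plan is to obtain this as the special case of Theorem \ref{relativekunz} in which $A$ is the prime field of $B$. I would set $A:=\mathbb{F}_p$, the prime subring of $B$, which is a field since $B$ has characteristic $p$; the structure map $f:\mathbb{F}_p\to B$ is then flat (being a map from a field) and local. Since the Frobenius of $\mathbb{F}_p$ is the identity, ${}^\phi A=\mathbb{F}_p$ as an $\mathbb{F}_p$-module, so the canonical isomorphism ${}^\phi A\otimes_A B=\mathbb{F}_p\otimes_{\mathbb{F}_p}B=B$ identifies the relative Frobenius $\Phi:{}^\phi A\otimes_A B\to{}^\phi B$, $\Phi(a\otimes b)=f(a)b^p$, with the absolute Frobenius $\phi:B\to{}^\phi B$. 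Under this identification the $({}^\phi A\otimes_A B)$-module ${}^\phi B$ becomes exactly ${}^\phi B$ regarded as a $B$-module via $\phi$, so the hypothesis $\mathrm{fd}_B({}^\phi B)<\infty$ is precisely the hypothesis $\mathrm{fd}_{{}^\phi A\otimes_A B}({}^\phi B)<\infty$ of Theorem \ref{relativekunz}.

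Next I would invoke Theorem \ref{relativekunz} for $f:\mathbb{F}_p\to B$ to conclude that $f$ is discretely regular. Taking $\qqq=\nnn$ in Definition \ref{43}, this says in particular that $B=B_{\nnn}$ is formally smooth over $\mathbb{F}_p$ for the ideal $\nnn$ with the discrete topology. Since $\mathbb{F}_p$ is a field it is formally regular with the discrete topology, trivially, because $\HH^2(\mathbb{F}_p,\mathbb{F}_p,M)=0$ for every module $M$ (its cotangent complex over itself vanishes). Hence Corollary \ref{cor.1.f.smooth.regular}(i), applied to the local homomorphism $\mathbb{F}_p\to B$, yields that $B$ is formally regular with the discrete topology, as required.

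All of the substance lies in Theorem \ref{relativekunz}; the only point that needs care is the bookkeeping in the first step, namely identifying the relative Frobenius of $\mathbb{F}_p\to B$ together with its source ring ${}^\phi A\otimes_A B$ with the absolute Frobenius $\phi:B\to B$, so that the two finite-flat-dimension hypotheses genuinely coincide (including the module structure on ${}^\phi B$). Once that identification is made the corollary is immediate.
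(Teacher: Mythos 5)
Your proposal is correct and follows essentially the same route as the paper: specialize Theorem \ref{relativekunz} to the structure map from the prime field $\mathbb{F}_p$, after observing that the relative Frobenius of $\mathbb{F}_p\to B$ is the absolute Frobenius of $B$, so the two finiteness hypotheses coincide. The only (immaterial) difference is in the final bookkeeping step: the paper passes from ``discretely regular'' to formal regularity of $B$ via Proposition \ref{prop.reg.hom} (ii)$\Rightarrow$(iii), whereas you use Corollary \ref{cor.1.f.smooth.regular}(i); both are valid one-line deductions.
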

\begin{proof}
Taking as $A$ the prime field of $B$, Theorem \ref{relativekunz} says that the Frobenius homomorphism $A\to B$ is discretely regular, and then Proposition \ref{prop.reg.hom} $(ii) \Rightarrow (iii)$ gives the result. Alternatively, we can apply Theorem \ref{main} to $\phi:B\to B$ noting that $\phi$ induces the zero map on $\HH_2(B,l,l)$.
\end{proof}

\end{document}